\DeclareMathOperator*{\bs}{\bigstar}
\title[An algebraic approach for weak coupling of LT models]{An algebraic approach for the weak coupling of multiple Lohe tensor models}
\author[S.-Y. Ha]{Seung-Yeal Ha}
\address[Seung-Yeal Ha]{\newline Department of Mathematical Sciences and Research Institute of Mathematics, \newline Seoul National University, Seoul 08826, Republic of Korea}
\email{syha@snu.ac.kr}
\author[D. Kim]{Dohyun Kim}
\address[Dohyun Kim]{\newline School of Mathematics, Statistics and Data Science, \newline Sungshin Women's University, Seoul 02844, Republic of Korea}
\email{dohyunkim@sungshin.ac.kr}
\author[H. Park]{Hansol Park}
\address[Hansol Park]{\newline Department of Mathematics,\newline
 Simon Fraser University,  8888 University Dr, Burnaby, BC V5A 1S6, Canada}
\email{hansol960612@snu.ac.kr}
\newtheorem{theorem}{Theorem}[section]
\newtheorem{lemma}{Lemma}[section]
\newtheorem{corollary}{Corollary}[section]
\newtheorem{proposition}{Proposition}[section]
\newtheorem{remark}{Remark}[section]
\newtheorem{definition}{Definition}[section]
\newcommand{\bbr}{\mathbb R}
\newcommand{\bbs}{\mathbb S}
\newcommand{\bbz}{\mathbb Z}
\newcommand{\bbn}{\mathbb N}
\newcommand{\mi}{\mathrm{i}}
\newcommand{\kp}{\kappa}
\newcommand{\tF}{{\textup{F}}}
\newcommand{\SOn}{{\mathbf{SO}}(n) }
\newcommand{\SOd}{{\mathbf{SO}}(d) }
\renewcommand{\d}{{\textup{d}}}
\newcommand{\dt}{{\textup{d}t}}
\newcommand{\bfd}{\mathbf d}
\newcommand{\dg}{\dagger}
\newcommand{\veps}{\varepsilon}
\newcommand{\bd}{\mathbf{d}}
\newcommand{\bbc}{\mathbb C}
\begin{document}

\date{\today}

\subjclass[2020]{82C10, 82C22, 35B37} \keywords{Lohe tensor model, monoid, algebraic structure}

\thanks{\textbf{Acknowledgment.} The work of S.-Y. Ha is supported by National Research Foundation of Korea (NRF-2020R1A2C3A01003881), and the work of D. Kim was supported by the National Research Foundation of Korea (NRF) grant funded by the Korea government (MSIT) (No.2021R1F1A1055929).}

\begin{abstract}
We present a systematic algebraic approach for the weak coupling of Cauchy problems to multiple Lohe tensor models. For this, we identify an admissible Cauchy problem to the Lohe tensor (LT) model with a characteristic symbol  consisting of four tuples in terms of a size vector, a natural frequency tensor, a  coupling strength tensor and admissible initial configuration. In this way, the collection of all admissible Cauchy problems to the LT models is equivalent to the space of characteristic symbols. On the other hand, we introduce a binary operation, namely ``{\it fusion operation}" as a binary operation between the characteristic symbols. It turns out that the fusion operation satisfies the associativity and admits the identity element in the space of characteristic symbols which naturally forms a monoid. By virtue of the fusion operation, the weakly coupled system of multiple LT models can be obtained by applying the fusion operation of multiple characteristic symbols corresponding to the LT models. As a concrete example, we consider a weak coupling of the swarm sphere model and the Lohe matrix model, and provide a sufficient framework leading to emergent dynamics to the proposed weakly coupled model. 
\end{abstract}

\maketitle \centerline{\date}


\section{Introduction} \label{sec:1}
\setcounter{equation}{0}
Collective behaviors of interacting many-body systems have received a lot of  attention from various scientific disciplines in engineering and sciences, e.g., synchronization and swarming behaviors in biology \cite{B-B, Pe, T-B-L, T-B, Wi1, Wi2}, decentralized control in engineering \cite{L-P-L-S, P-L-S, Re}, non-convex stochastic optimization algorithms in machine learning community \cite{C-C-T-T, C-J-L-Z, F-H-P-S1, F-H-P-S2, K-T, K-E, P-T-T-M}, etc. However, despite  their ubiquitous appearance in nature, its systematic model-based studies were begun only   a half-century ago  by Winfree \cite{Wi2} and Kuramoto \cite{Ku2} whose seminal works have attracted several researchers and serves as milestones of collective dynamics as a major research subject in applied mathematics, control theory, statistical physics, etc. Since then, several phenomenological models have been proposed for analytical and practical treatments for diverse applications,  to name a few, the Cucker-Smale model \cite{C-S}, the swarm sphere model \cite{C-C-H, C-H, H-K-L-N, M2, M1, M20, Ol, Zhu1, Zhu2}, matrix aggregation models \cite{B-C-S, D-F-M-T, D,Lo-09}, etc. Among them, we are mainly interested in the LT model which was recently introduced by Ha and Park \cite{H-P3}. To set up the stage, we first begin with several jargons and notation related to tensors and their algebraic operations following the presentation from \cite{H-P3}. 
 
 A complex rank-$m$ tensor can be visualized as a  multi-dimensional array of complex numbers with multi-indices. The {\it ``rank}'' (or  {\it ``order}'') of a tensor is the number of indices,  for example, scalars, vectors and matrices corresponding to rank-0, 1 and 2 tensors, respectively.  For notational simplicity, we set 
\[
[k] := \{1, \cdots, k \} \subset \bbn, \quad \mbox{for $k \in \bbn$}.
\]
Let $T$ be a rank-$m$ tensor with a size $d_1 \times \cdots \times d_m$.  Then, we denote $(\alpha_1, \cdots, \alpha_m)$-th component of  $T$ by $[T]_{\alpha_1 \cdots \alpha_m}$, and we also denote $\overline{T}$ by the rank-$m$ tensor whose components are simply the complex conjugate of the corresponding elements in $T$:
\[ [\overline{T}]_{\alpha_1 \cdots \alpha_m} :=\overline{[T]_{\alpha_1 \cdots \alpha_m}},  \quad \alpha_i \in[d_i], \quad i \in [m]. \]
In other words, each component of $\overline T$ is defined as the complex conjugate of the corresponding element of $T$.  Let ${\mathcal T}_m(\bbc; d_1 \times\cdots\times d_m)$ be the collection of all rank-$m$ tensors with size $d_1 \times\cdots\times d_m$ and complex entries. Then, it is easy to see that $ {\mathbb C}^{d_1 \times \cdots \times d_m}$ is a complex vector space. Throughout the paper,  for notational convenience, we use a handy notation:
\[  {\mathbb C}^{d_1 \times \cdots \times d_m}:= {\mathcal T}_m(\bbc; d_1 \times\cdots\times d_m). \]
Furthermore, we also introduce the following handy notation:~for $T \in \bbc^{d_1 \times \cdots \times d_m}$ and $A \in \bbc^{d_1 \times \cdots \times d_m \times d_1 \times \cdots \times d_m}$, we set
\begin{align*}
\begin{aligned}
& [T]_{\alpha_{*}}:=[T]_{\alpha_{1}\alpha_{2}\cdots\alpha_{m}}, \quad [T]_{\alpha_{*0}}:=[T]_{\alpha_{10}\alpha_{20}\cdots\alpha_{m0}},  \quad  [T]_{\alpha_{*1}}:=[T]_{\alpha_{11}\alpha_{21}\cdots\alpha_{m1}}, \\
&  [T]_{\alpha_{*i_*}}:=[T]_{\alpha_{1i_1}\alpha_{2i_2}\cdots\alpha_{mi_m}}, \quad [T]_{\alpha_{*(1-i_*)}}:=[T]_{\alpha_{1(1-i_1)}\alpha_{2(1-i_2)}\cdots\alpha_{m(1-i_m)}}, \\
&  [A]_{\alpha_*\beta_*}:=[A]_{\alpha_{1}\alpha_{2}\cdots\alpha_{m}\beta_1\beta_2\cdots\beta_{m}}.
\end{aligned}
\end{align*}
Moreover, we can endow an inner product $\langle \cdot, \cdot \rangle_\tF$, namely {\it ``Frobenius inner product''} and its induced norm $\| \cdot \|_\tF$ on $ \bbc^{d_1 \times \cdots \times d_m}$:~for $T, S \in  \bbc^{d_1 \times \cdots \times d_m}$, 
\[ \langle T, S \rangle_\tF := \sum_{\alpha_* \in \prod_{i=1}^{m} \{1, \cdots, d_i\} } [\overline{T}]_{\alpha_*} [S]_{\alpha_*}, \quad \|T \|_\tF^2 := \langle T, T \rangle_\tF. \]
Let  $A_j$ be a {\it block skew-Hermitian} rank-$2m$ tensor with size $(d_1 \times\cdots\times d_m) \times (d_1 \times \cdots\times d_m)$ satisfying the relation:
\begin{equation*}
[A_j]_{\alpha_{*0} \alpha_{*1}} = -[\overline {A}_j]_{\alpha_{*1} \alpha_{*0}}.
\end{equation*}
In other words, if two blocks with the first $m$-indices are interchanged with the rest $m$-indices, then it results in the negation of the complex conjugate of itself. \newline

Now, we are ready to introduce the LT model on the finite ensemble $\{T_j \}_{j=1}^{N} \subset \bbc^{d_1 \times \cdots \times d_m}$ of rank-$m$ tensors: 
\begin{align}\label{LT}
\begin{cases}
[\dot{T}_j]_{\alpha_{*0}}=[A_j]_{\alpha_{*0}\alpha_{*1}}[T_j]_{\alpha_{*1}}\\
\displaystyle + \sum_{i_*\in\{0, 1\}^m}\frac{\kappa_{i_*}}{N}\sum_{k=1}^N\left([T_k]_{\alpha_{*i_*}}[\bar{T}_j]_{\alpha_{*1}}[T_j]_{\alpha_{*(1-i_*)}}-[T_j]_{\alpha_{*i_*}}[\bar{T}_k]_{\alpha_{*1}}[T_j]_{\alpha_{*(1-i_*)}}\right)~~ t>0,\\
T_j(0)=T_j^0\in \bbc^{d_1\times d_2\times\cdots \times d_m},\quad \|T_j^0\|_\tF=1,
\end{cases}
\end{align}
where $A_j\in\bbc^{d_1\times \cdots\times d_m\times d_1\times \cdots\times d_m}$ is a rank-$2m$ natural frequency tensor satisfying the block skew-hermitian property:
\begin{equation*}
[A_j]_{\alpha_{*0}\alpha_{*1}}=-[\overline{A}_j]_{\alpha_{*1}\alpha_{*0}},
\end{equation*}
and  we used the Einstein summation convention for repeated indices in the right-hand side of \eqref{LT}.

Although the LT model \eqref{LT} looks pretty complicated, coupling terms inside the parenthesis of \eqref{LT} are designed to incorporate the coupling terms for the Kuramoto model, the swarm sphere model and the Lohe matrix model, and they certainly have ``gain cubic term$-$loss cubic term''  structure. These careful designs of couplings (or interactions) yield  the existence of a constant of motion:
\[  \|T_j(t) \|_\tF = \|T_j(0) \|_\tF, \quad t \geq 0,\quad j\in [N].\]
In this paper, we are interested in the weak coupling of the LT models. For instance, the following question naturally arises:
 \begin{quote}
``{\it How to couple two swarm sphere models or two Lohe matrix models {\it weakly}? } 
 \end{quote}
 Here, the adverb ``weakly" means that  when we coupled two aggregation models, we do not change the underlying interaction structures, and the effect of other states is reflected on parameters such as coupling strengths or natural frequency tensors. As briefly discussed in Section \ref{sec:2}, weak couplings of two swarm sphere models and two Lohe matrix models are recently investigated in \cite{Lo-20} and \cite{H-K-P2} under the names of  ``{\it double sphere model''} and ``{\it double  matrix model''}, respectively. In fact, the aforementioned weak couplings are for the LT models with the same rank structures. Then, how about the weak coupling of the swarm sphere model and the Lohe matrix model? We will see the affirmative answer for this question at the end of this paper. \newline
 
In this paper, we address the following two simple questions: 
\vspace{0.1cm}
\begin{itemize}
\item
(Q1):~For a given Cauchy problem to  the LT model \eqref{LT},  which set of parameters characterize the Cauchy problem? 
\vspace{0.2cm}
\item
(Q2):~For a collection of Cauchy problems to the LT model, can we define a suitable weak coupling of two Cauchy problems to the LT models for possibly different rank tensors?
\end{itemize}

Next, we briefly outline   the main results dealing with affirmative answers for (Q1) and (Q2) by developing an algebraic approach for the weak coupling of two Cauchy problems. First, we introduce {\it the characteristic symbol} denoted by $\mathfrak{C}$ for each admissible Cauchy problem to the LT model \eqref{LT} consisting of {\it four components}: a size vector $\mathbf{d}$, a coupling strength tensor $\mathfrak{K}$, set of  natural frequency tensors $\{A_j\}$ and initial configuration $\{T_j^0\}$:
\[
\Big \{ \mbox{Cauchy problems \eqref{LT} for the LT model} \Big \} \quad \Longleftrightarrow \quad  \mathfrak{C}:=(\mathbf{d}, \mathfrak{K}, \{A_j\}, \{T_j^0\}).
\]
If a solution $T_j$ to \eqref{LT} belongs to $\bbc^{d_1\times d_2\times \cdots \times d_m}$, then one can associate a vector in $\bbn^m :=\underbrace{\bbn \times \cdots \times \bbn}_{m~\mbox{times}}$:
\[
\mathbf{d} := (d_1,d_2,\cdots,d_m) \in \bbn^m,
\]
and we call it as the {\it size vector} of \eqref{LT}. For the LT model \eqref{LT}, there are $2^m$ coupling strengths denoted $\{\kp_{i_*}\}_{i_* \in \{0,1\}^m}$. Thus, we collect all the  coupling strengths and named it as the {\it coupling strength tensor}:
\[
\mathfrak{K} :=(\kappa_{i_*})_{i_*\in\{0,1\}^m}.
\]
Then, $\mathfrak{K}$ is a rank-$m$ real tensor. Lastly, the sets of natural frequency tensors and initial configuration are included in the characteristic symbol (see Definition \ref{D3.2}). Thus, once the Cauchy problem to the  LT model is given, then we can uniquely assign the associated characteristic symbol up to some equivalence relations. On the contrary, if the characteristic symbol is given, we can find the corresponding Cauchy problem to the LT model. In this way, we can naturally consider  a bijective map  between the collection of all admissible Cauchy problems to the LT models and the collection of characteristic symbols. 

Second, we introduce an algebraic operation denoted by $\star$ which we call the {\it fusion operation} on the set of characteristic symbols, and we show that the proposed fusion operation forms a monoid on its domain, i.e., it is associative and admits the identity element.  However, an inverse element does not exist. Thus, the set of characteristic symbols equipped with $\star$ forms a monoid. Thanks to this algebraic machinery on the space of characteristic symbols, we can weakly couple two Cauchy problems to the LT models for rank-$m_1$ tensors $\{T_j^1\}$ and rank-$m_2$ tensors $\{T_j^2\}$. We call the resulting coupled system as {\it the double tensor model} where the interaction between $T_j^1$ and $T_j^2$ is involved in  a classical way (see \eqref{C-1}). After tedious manipulation, we can also find a governing equation for $T_j:= T_j^1 \otimes T_j^2$ (see \eqref{C-3}). Thus, we would say that the equation for $T_j$ is obtained by {\it coupling} two LT models. Since the LT model corresponds to the unique characteristic symbol up to an equivalence relation, coupling two LT models allow us to define the fusion operation. For a detailed discussion, we refer the reader to Section \ref{sec:4.1}.  \newline

The rest of the paper is organized as follows. In Section \ref{sec:2}, we review two weakly coupled models, namely the double sphere model and the double matrix model, and study their emergent dynamics and relation with the Cauchy problem to the LT model.  In Section \ref{sec:3}, we provide a characterization of the Cauchy problem to the LT model. For this, we introduce a characteristic symbol for the given Cauchy problem, and as concrete examples, we discuss characteristic symbols  for the LT model on the space of low-rank tensors. In Section \ref{sec:4}, we provide the double tensor model which can be obtained as a weak coupling of two LT models. To motivate this weak coupling, we recall a coupled system between the Kuramoto model and the swarm sphere model from suitable parametrization of the LT model on the unitary group of degree 2 $\mathbf{U}(2)$. In Section \ref{sec:5}, we present the multiple tensor model which couples arbitrary numbers of  LT models. For this explicit construction, we introduce a fusion operation between characteristic symbols, and study the reductions of the multiple tensor model to the low-rank tensors and a framework for the gradient flow formulation. In Section \ref{sec:6}, we study weak couplings for the LT models on the ensemble of low-rank tensors with different ranks, and study several emergent dynamics of the proposed coupled model. Finally, Section \ref{sec:7} is devoted to a brief summary of the paper and future direction.

%
%
%
%
%
%
%

\section{Preliminaries}\label{sec:2}
In this section, we discuss two concrete examples for the weak couplings of first-order aggregation models  for low-rank tensors to motivate the subsequent sections. More precisely, we deal with weak couplings for the following two models:
\begin{itemize}
\item
Weak coupling of two swarm sphere models with distinct sets of free flows for rank-one real tensors with the same size.
\vspace{0.1cm}
\item
Weak coupling of two Lohe matrix models with distinct sets of free flows for rank-two unitary tensors with the same size.
\end{itemize}
For each case, we present the corresponding weakly coupled systems and review their  emergent dynamics and relation to the  LT model \eqref{LT}. 

\subsection{Weak coupling of two swarm sphere models} \label{sec:2.1} The swarm sphere model describes aggregation phenomenon on the unit sphere embedded in Euclidean space. 
Consider two ensembles $\{u_j\}$ and $\{v_j\}$ with the same cardinalities whose dynamics are governed by the swarm sphere models with different set of free flows $\{\Omega_j \}$ and $\{\Lambda_j \}$. In the absence of the intra-coupling between two ensembles,  the dynamics of each ensemble is governed by two independent swarm sphere models: 
\begin{equation}
\begin{cases} \label{B-0}
\displaystyle \dot{u}_j =\Omega_j u_j+\frac{1}{N}\sum_{k=1}^N  \kp (u_k-\langle u_k, u_j\rangle u_j),  \\
\displaystyle \dot{v}_j =\Lambda_jv_j+\frac{1}{N}\sum_{k=1}^N  \kp (v_k-\langle v_k, v_j\rangle v_j), \quad j\in [N], 
\end{cases}
\end{equation}
where $\Omega_j$ and $\Lambda_j$ are $d_1\times d_1$ and $d_2\times d_2$ skew-symmetric matrices with sizes $d_1 \times d_1$ and $d_2 \times d_2$, respectively: 
\begin{equation*} \label{B-1}
\Omega_j^\top = -\Omega_j, \quad \Lambda_j^\top = -\Lambda_j, \quad  j \in [N], 
\end{equation*}
and $\kp$ denotes the nonnegative (uniform) coupling strength.  The emergent dynamics of \eqref{B-0} has been extensively in a series of literature \cite{C-C-H, C-H, C-H2, H-K-L-N, Lo-09, Ol, M2, M1, M20, Zhu1, Zhu2}. \newline

Now, we consider weak couplings between two ensembles $\{ u_j \}$ and $\{v_j \}$. In this case,  a natural question to arise will be 
\begin{quote}
``{\it How to couple the dynamics for $u_j$ and  $v_j$ in a suitable way so that the resulting weakly coupled model can exhibit emergent collective dynamics}?"
\end{quote}
Of course, there might be numerous ways to couple the dynamics of $u_j$ and $v_j$. However, we require that the resulting coupled model needs to be consistent with previously well-known cases. In this direction,   Lohe in \cite{Lo-20} first proposed the double sphere model which associates the dynamics of $u_j$ and $v_j$ in \eqref{B-0}. To be specific, the coupled dynamics is governed by the Cauchy problem to the double sphere model on the product of two spheres $\bbs^{d_1-1} \times \bbs^{d_2-1}$:
\begin{align}\label{B-1}
\begin{cases}
\displaystyle\dot{u}_j=\Omega_j u_j+\frac{1}{N}\sum_{k=1}^N \kp \langle v_j, v_k\rangle(u_k-\langle u_k, u_j\rangle u_j), \quad t > 0, \\
\displaystyle\dot{v}_j=\Lambda_jv_j+\frac{1}{N}\sum_{k=1}^N\kp \langle u_j, u_k\rangle(v_k-\langle v_k, v_j\rangle v_j),\\
(u_j, v_j)(0)=(u_j^0,v_j^0)\in\bbs^{d_1-1}\times\bbs^{d_2-1}, \quad j \in [N].
\end{cases}
\end{align}
Compared to the decoupled system \eqref{B-0}, weak coupling is made via the change of  constant coupling strengths into state-dependent couplings:
\begin{equation*} \label{B-1-1}
(\kappa, \kappa) \quad  \longrightarrow \quad  (\kappa \langle v_j, v_k\rangle, ~ \kappa \langle u_j, u_k\rangle).
\end{equation*}
In other words, the effect of other ensemble is reflected on the coupling strength, not on the interaction terms. Thus, we call \eqref{B-1} as a \textit{weakly coupled} model. Complete aggregation of \eqref{B-1} can be achieved under suitable conditions on system parameters and initial data as follows.
\begin{proposition}\label{P2.1}
\emph{\cite{H-K-P1}} 
Suppose system parameters and initial data $\{ (u_i^0, v_i^0) \}$ satisfy
\[ \kappa > 0, \quad \Omega_i = \Omega, \quad \Lambda_i = \Lambda, \quad i \in [N], \quad  \min_{1\leq i, j\leq N }\langle u_i^0, u_j^0\rangle>0,\quad  \min_{1\leq i, j\leq N}\langle v_i^0, v_j^0\rangle>0,
\]
where $\Omega$ and $\Lambda$ are $d_1\times d_1$ and $d_2\times d_2$ skew-symmetric matrices respectively, and let $\{(u_i,v_i)\}$ be a solution to system \eqref{B-1}.  Then, we have
\[
\lim_{t\to\infty} \max_{1\leq i,j\leq N}  |u_i(t)-u_j(t) |=0 \quad \textup{and} \quad \lim_{t\to\infty}  \max_{1\leq i,j\leq N}  |v_i(t)-v_j(t) |=0.
\]
Moreover, convergence rate is exponential. 
\end{proposition}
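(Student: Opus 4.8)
The plan is to remove the free flows by passing to a co-rotating frame and then to track the two minimal correlations
\[
h_u(t):=\min_{1\le i,j\le N}\langle u_i(t),u_j(t)\rangle,\qquad h_v(t):=\min_{1\le i,j\le N}\langle v_i(t),v_j(t)\rangle,
\]
showing that each is driven to $1$ exponentially fast. Since $\Omega_i\equiv\Omega$ and $\Lambda_i\equiv\Lambda$ are common and skew-symmetric, the matrix exponentials $e^{\Omega t}$ and $e^{\Lambda t}$ are orthogonal; substituting $\tilde u_j=e^{-\Omega t}u_j$ and $\tilde v_j=e^{-\Lambda t}v_j$ cancels the linear terms in \eqref{B-1} and, because orthogonal maps preserve the inner products $\langle u_i,u_j\rangle$ and $\langle v_i,v_j\rangle$, leaves the coupling terms invariant in form (and preserves the positivity hypotheses on the initial data). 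As $|u_i-u_j|=|\tilde u_i-\tilde u_j|$, it suffices to prove the claim for the reduced system with $\Omega=\Lambda=0$, which I will assume henceforth.

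Next I would compute the evolution of the pairwise correlations. Writing $c_{ij}=\langle u_i,u_j\rangle$ and $b_{ij}=\langle v_i,v_j\rangle$, direct differentiation of the reduced $u$-equation gives, for any pair $(p,q)$,
\[
\frac{d}{dt}c_{pq}=\frac{\kp}{N}\sum_{k=1}^N\Big(b_{pk}(c_{kq}-c_{kp}c_{pq})+b_{qk}(c_{pk}-c_{kq}c_{pq})\Big).
\]
Evaluating this at a pair $(p,q)$ realizing the minimum $c_{pq}=h_u$ and using the elementary identity $c_{kq}-c_{kp}h_u=(c_{kq}-h_u)+h_u(1-c_{kp})$ (and its symmetric analogue), every bracketed term becomes a sum of nonnegative quantities provided $h_u\ge0$; replacing each positive weight $b_{pk},b_{qk}$ by the lower bound $h_v$ and each $c_{kp},c_{kq}$ by $h_u$ collapses the sum to a clean logistic-type bound. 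Handling the non-smoothness of the minimum through the standard Dini-derivative lemma (the minimum of finitely many $C^1$ functions is locally Lipschitz, and at a.e.\ $t$ its derivative equals that of an active pair, for which the bound is uniform) yields the coupled system of differential inequalities
\[
\dot h_u\ \ge\ 2\kp\,h_u h_v(1-h_u),\qquad \dot h_v\ \ge\ 2\kp\,h_u h_v(1-h_v),\qquad \text{a.e. } t>0.
\]

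Finally I would close the argument by a forward-invariance plus Gronwall step. Since the right-hand sides above are nonnegative whenever $h_u,h_v\in[0,1]$ and the hypotheses give $h_u(0),h_v(0)>0$, a short bootstrap shows both quantities are nondecreasing and hence stay above their positive initial values $a:=h_u(0)$ and $b:=h_v(0)$; together with $h_u,h_v\le 1$ from Cauchy--Schwarz, this confines the trajectory to $(0,1]^2$. Substituting these uniform bounds back gives $\dot h_u\ge 2\kp ab(1-h_u)$, so Gronwall's inequality forces $1-h_u(t)\le(1-a)e^{-2\kp ab\,t}$, and symmetrically for $h_v$. Since $\max_{i,j}|u_i-u_j|^2=2(1-h_u)$ and $\max_{i,j}|v_i-v_j|^2=2(1-h_v)$, both diameters decay exponentially, which is the assertion. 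The main obstacle I anticipate is the interplay between the two coupled inequalities: the decay rate for the $u$-ensemble depends on the a priori positivity of $h_v$ and vice versa, so the positivity of \emph{both} minimal correlations must be secured simultaneously by the monotonicity/bootstrap argument before any quantitative rate can be extracted. The non-smoothness of the minima is a routine but necessary technical point to be dispatched with the Dini-derivative formalism.
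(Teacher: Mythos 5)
The paper does not actually prove Proposition \ref{P2.1}; it is quoted from \cite{H-K-P1} without proof. Your argument is the standard one for this class of results and coincides in spirit with what the paper itself does for the analogous coupled system in Section \ref{sec:6} (Lemma \ref{L6.1} and Theorem \ref{T6.1}): pass to a co-rotating frame using the common skew-symmetric free flows, track the minimal correlations, establish forward invariance of positivity, and close with a logistic/Gronwall estimate. The frame reduction, the correlation ODE for $c_{pq}$, the Dini-derivative handling of the minimum, the bootstrap keeping $h_u,h_v$ bounded below by their initial values, and the final conversion $\max_{i,j}|u_i-u_j|^2=2(1-h_u)$ are all correct.

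The one step that is not justified as written is the passage from the correlation ODE to $\dot h_u\ge 2\kp\,h_u h_v(1-h_u)$. After splitting $c_{kq}-c_{kp}h_u=(c_{kq}-h_u)+h_u(1-c_{kp})$, the term $(c_{kq}-h_u)$ can indeed be bounded below by $0$ (i.e., ``replacing $c_{kq}$ by $h_u$''), but the term $h_u(1-c_{kp})$ satisfies $h_u(1-c_{kp})\le h_u(1-h_u)$ because $c_{kp}\ge h_u$ by minimality --- so the term-by-term substitution you describe goes the \emph{wrong} way there and does not yield a lower bound. The inequality is nevertheless true: you must add the two brackets coming from $\langle\dot u_p,u_q\rangle$ and $\langle u_p,\dot u_q\rangle$ \emph{before} estimating, using
\[
\bigl(c_{kq}-c_{kp}h_u\bigr)+\bigl(c_{pk}-c_{kq}h_u\bigr)=(c_{kq}+c_{kp})(1-h_u)\ \ge\ 2h_u(1-h_u),
\]
where the symmetry $c_{pk}=c_{kp}$ of the real inner product is essential; only after this pairing may you insert $b_{pk},b_{qk}\ge h_v$ (legitimate since each bracket is nonnegative once $h_u\ge0$) and sum over $k$ to obtain $\dot h_u\ge 2\kp h_uh_v(1-h_u)$. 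With this grouping made explicit the rest of your argument goes through and delivers the stated exponential rates.
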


\vspace{0.5cm}

Next, we explain how the double sphere model \eqref{B-1} can be related to the  LT model \eqref{LT} as a special case. For this, we consider the following special set-up with low-rank tensors:
\begin{equation*}
m=2,\quad \kp_{00} = \kp_{11} =0,\quad \kp_{01} = \kp_{10} = \kp.
\end{equation*}
Under this set-up, the LT model in a mean-field form becomes the generalized Lohe matrix model in \cite{H-P3}:
\begin{equation}\label{B-2} 
\begin{cases}
\displaystyle {\dot T}_j =A_j T_j +\kappa(T_cT_j^\dagger T_j -T_j T_c^\dagger T_j)+\kappa(T_j T_j^\dagger T_c-T_j T_c^\dagger T_j), \quad t > 0, \\
\displaystyle T_j(0)=T_j^0,\quad  j \in [N],
\end{cases}
\end{equation}
where $T_c : = \frac{1}{N} \sum_{k=1}^{N} T_k$ is the average state of $\{T_k\}$. In the following proposition, we see that how \eqref{B-1} and \eqref{B-2} can be regarded as equivalent systems under {\it well-prepared} natural frequency tensors and initial data. 
\begin{proposition} \cite{H-K-P1}  \label{P2.2} 
Systems \eqref{B-1} and \eqref{B-2} are equivalent in the following sense.
 \begin{enumerate}
 \item
 Suppose $\{(u_j, v_j) \}$ is a solution to \eqref{B-1}. Then, $\{ T_j := u_j \otimes v_j  \}$ is  a solution to \eqref{B-2}  with the following system parameters and  initial data:
 \begin{equation} \label{B-3}
 A_j T_j:= \Omega_j T_j + T_j \Lambda_j^\top, \quad T_j^0 = u_j^0 \otimes v_j^0, \quad j \in [N].
 \end{equation}
  \item
 Suppose $\{ T_j \}$ is a solution to \eqref{B-2}--\eqref{B-3} with completely separable initial data:
 \begin{equation*}  
 T_j^0 =: u_j^0 \otimes v_j^0,  \quad j \in [N],
 \end{equation*}
 for rank-1 real tensors $u_j^0 \in \bbr^{d_1}$ and $v_j^0 \in \bbr^{d_2}$ with unit norms. Then, there exist pair of unit vectors $(u_j(t), v_j(t))$ such that 
 \[  T_j(t) = u_j(t) \otimes v_j(t), \quad t>0,  \quad j \in [N],
 \]
 where $\{ (u_j,v_j) \}$ is a solution to \eqref{B-1} with  the initial data $(u_j, v_j)(0) = (u_j^0,v_j^0)$.
 \end{enumerate}
\end{proposition}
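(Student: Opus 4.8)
The plan is to establish part (1) by a direct computation exploiting the rank-one (outer-product) structure $T_j = u_j \otimes v_j$, and then to deduce part (2) from part (1) together with the uniqueness of solutions to the Cauchy problem \eqref{B-2}.

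First, I would set up the algebraic dictionary identifying $T_j = u_j \otimes v_j$ with the matrix $u_j v_j^\top$, so that $T_j^\dagger = v_j u_j^\top$ (here $\dagger$ coincides with the transpose $\top$ since the tensors are real). Because $\{(u_j, v_j)\}$ solves \eqref{B-1}, which preserves the sphere constraints $|u_j| = |v_j| = 1$, the cubic building blocks of \eqref{B-2} collapse into rank-one form:
\begin{align*}
T_j^\dagger T_j &= v_j v_j^\top, \qquad T_j T_j^\dagger = u_j u_j^\top, \\
T_c T_j^\dagger T_j &= \frac{1}{N}\sum_{k=1}^N \langle v_k, v_j\rangle\, u_k \otimes v_j, \qquad T_j T_j^\dagger T_c = \frac{1}{N}\sum_{k=1}^N \langle u_k, u_j\rangle\, u_j \otimes v_k, \\
T_j T_c^\dagger T_j &= \frac{1}{N}\sum_{k=1}^N \langle u_k, u_j\rangle \langle v_k, v_j\rangle\, u_j \otimes v_j.
\end{align*}
I would then differentiate $T_j = u_j \otimes v_j$ by the Leibniz rule, $\dot{T}_j = \dot{u}_j \otimes v_j + u_j \otimes \dot{v}_j$, insert the double-sphere dynamics \eqref{B-1}, and match term-by-term against \eqref{B-2}. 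The free-flow terms agree by the very definition $A_j T_j := \Omega_j T_j + T_j \Lambda_j^\top$ in \eqref{B-3}; the $u$-coupling tensored with $v_j$ reproduces $\kappa(T_c T_j^\dagger T_j - T_j T_c^\dagger T_j)$; and $u_j$ tensored with the $v$-coupling reproduces $\kappa(T_j T_j^\dagger T_c - T_j T_c^\dagger T_j)$. The shared cross term $T_j T_c^\dagger T_j$ supplies precisely the two projection (normalization) contributions $\langle u_k, u_j\rangle u_j$ and $\langle v_k, v_j\rangle v_j$.

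For part (2), I would argue by uniqueness. Let $\{(u_j, v_j)\}$ be the solution of \eqref{B-1} issued from the prescribed separable data $(u_j^0, v_j^0)$; since the product of spheres is compact and the vector field is smooth, this solution exists globally. By part (1), $\widetilde{T}_j := u_j \otimes v_j$ solves \eqref{B-2}--\eqref{B-3} with the same initial datum $T_j^0 = u_j^0 \otimes v_j^0$, which has unit Frobenius norm. As the right-hand side of \eqref{B-2} is polynomial in the states (hence locally Lipschitz) and $\|T_j\|_\tF$ is conserved, the Cauchy problem \eqref{B-2} admits a unique global solution; therefore $T_j \equiv \widetilde{T}_j = u_j \otimes v_j$, which is the desired factorization.

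The main obstacle I anticipate is the bookkeeping in part (1): verifying that the four cubic terms of \eqref{B-2} separate cleanly into the two swarm-sphere couplings, and in particular that $T_j T_c^\dagger T_j$ is correctly shared between the two blocks so as to produce the projections onto the tangent spaces of the two spheres. The norm-preservation identities $T_j^\dagger T_j = v_j v_j^\top$ and $T_j T_j^\dagger = u_j u_j^\top$ are the linchpin that forces the blocks to collapse into rank-one form, so confirming that the sphere constraints are maintained along the flow is the essential preliminary step.
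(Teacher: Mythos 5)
The paper does not prove Proposition \ref{P2.2}; it is quoted from \cite{H-K-P1} without proof. Your argument is correct and is the expected one: identifying $T_j=u_jv_j^\top$, using $\|u_j\|=\|v_j\|=1$ to collapse $T_j^\dagger T_j=v_jv_j^\top$ and $T_jT_j^\dagger=u_ju_j^\top$, and matching the Leibniz-rule expansion of $\dot T_j$ term by term against \eqref{B-2} gives part (1), while part (2) follows from global well-posedness of \eqref{B-1} on $\bbs^{d_1-1}\times\bbs^{d_2-1}$ together with uniqueness for \eqref{B-2}, exactly as you outline.
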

\begin{remark}
The result of Proposition \ref{P2.2} roughly says that the double sphere model can be identified as a subclass of LT model  for rank-2 tensors. 
\end{remark}
From now on, as long as there is no confusion, we sometimes omit the expression ``$j \in [N]$", when we state a dynamical system.

\subsection{Weak coupling of two Lohe matrix models} \label{sec:2.2} Consider two independent Lohe matrix ensembles $\{ U_j \}$ and $\{ V_j \}$ on the   unitary groups $\mathbf{U}(d_1)$ and $\mathbf{U}(d_2)$ whose dynamics are governed by the following system:
\begin{equation} 
\begin{cases} \label{B-3-0}
 \dot{U}_j= -{\mathrm i} H_j U_j+\displaystyle\frac{1}{N}\sum_{k=1}^N \kappa \left( U_kU_j^\dagger U_j -U_jU_k^\dagger U_j\right),  \\
 \dot{V}_j = -{\mathrm i} G_j V_j +\displaystyle\frac{1}{N}\sum_{k=1}^N \kappa \left(V_kV_j^\dagger V_j -V_jV_k^\dagger V_j\right),
\end{cases}
\end{equation}
where $\kappa$ is a nonnegative constant, and $H_j\in\bbc^{d_1\times d_1}$ and $G_j\in \bbc^{d_2 \times d_2}$ are Hermitian matrices. 

As in previous subsection, we consider weak coupling of two subsystems in \eqref{B-3-0}.  Motivated by the relation between \eqref{B-0} and \eqref{B-1}, the double matrix model for $(U_j, V_j)$ has been proposed in \cite{H-K-P2}:
\begin{equation} 
\begin{cases} \label{B-3-1}
 \dot{U}_j= -{\mathrm i} H_j U_j+\displaystyle\frac{\kappa}{N}\sum_{k=1}^N\left(
\langle V_j, V_k\rangle_\tF~U_kU_j^\dagger U_j
-\langle V_k, V_j\rangle_\tF~U_jU_k^\dagger U_j\right), \\
 \dot{V}_j = -{\mathrm i} G_j V_j+\displaystyle\frac{\kappa}{N}\sum_{k=1}^N\left(
\langle U_j, U_k\rangle_\tF~V_kV_j^\dagger V_j
-\langle U_k, U_j\rangle_\tF~V_jV_k^\dagger V_j\right),\\
\end{cases}
\end{equation}
where $\langle A,B\rangle_\tF = \textup{tr} (A^\dagger B)$ is the Frobenius inner product. For the emergent dynamics of \eqref{B-3-1}, we introduce some functionals:
\begin{align*}
\begin{aligned} 
& \mathcal D(\mathcal U) := \max_{1\leq i,j\leq N} \|U_i - U_j \|_\tF,\quad  \mathcal  S(\mathcal U):= \max_{1\leq i,j\leq N} |d_1- d_{ij}|, \quad d_{ij} : = \langle U_i,U_j\rangle_\tF, \\
&\mathcal D(\mathcal V) := \max_{1\leq i,j\leq N} \|V_i - V_j\|_\tF, \quad \mathcal S(\mathcal V):= \max_{1\leq i,j\leq N} |d_2 - c_{ij}|,  \quad  c_{ij} := \langle V_i,V_j\rangle_\tF, \\
& {\mathcal L}({\mathcal U}, {\mathcal V}):=  \mathcal D(\mathcal U)  +\mathcal  D(\mathcal V) + \mathcal S(\mathcal U ) +\mathcal  S(\mathcal V), \\
&\mathcal D(\mathcal H) := \max_{1\leq i,j\leq N } \|H_i - H_j\|_\infty, \quad \mathcal D(\mathcal G) := \max_{1\leq i,j\leq N } \|H_i - H_j\|_\infty,
\end{aligned}
\end{align*}
where $\|A\|_\infty:= \max_{1\leq i,j\leq d} | [A]_{ij}|$ for  $A \in \bbc^{d\times d}$. Then,  emergent dynamics of \eqref{B-3-1} can be achieved under well prepared system parameters and initial data, and summarized in the following two propositions without proofs. 
\begin{proposition} \label{P2.3} \cite{H-K-P2} \emph{(Homogeneous ensemble)} 
Suppose system parameters and initial data satisfy 
\begin{align*} 
& H_j = H, \quad G_j = G, \quad  d_1 \geq d_2 >4\sqrt{d_1}, \\
&{\mathcal L}^0 = {\mathcal L}({\mathcal U}^0, {\mathcal V}^0) <  \frac{-(12 d_1 +27) + \sqrt{ (12d_1 +27)^2 + 48(d_2-4\sqrt{d_1})(3d_1+4)  } }{4(3d_1+4)},
\end{align*}
and let $\{(U_j,V_j)\}$ be a solution to \eqref{B-3-1}. Then, we have 
 \begin{equation*}
 \lim_{t\to\infty}  {\mathcal L}({\mathcal U}(t), {\mathcal V}(t)) = 0.
 \end{equation*}
 \end{proposition}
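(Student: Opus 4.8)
The plan is to follow the standard template for complete aggregation of Lohe-type matrix flows, adapted to the cross-coupling that links the two ensembles, and to collapse everything to a single scalar differential inequality for $\mathcal{L}$. First I would remove the free flows. Since the ensemble is homogeneous, $H_j\equiv H$ and $G_j\equiv G$, so I would pass to the rotating frames $\tilde U_j := e^{\mi H t}U_j$ and $\tilde V_j := e^{\mi G t}V_j$. A direct computation shows that the $-\mi H_j U_j$ and $-\mi G_j V_j$ terms are cancelled while the cubic coupling transforms covariantly (because $e^{\mi Ht}$ is unitary and enters as $e^{\mi Ht}U_kU_j^\dg U_j=\tilde U_k\tilde U_j^\dg\tilde U_j$), and the weights $c_{ij}=\langle V_i,V_j\rangle_\tF$, $d_{ij}=\langle U_i,U_j\rangle_\tF$ are invariant. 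Since $\mathcal{D}(\mathcal{U}),\mathcal{S}(\mathcal{U}),\mathcal{D}(\mathcal{V}),\mathcal{S}(\mathcal{V})$ are built only from Frobenius norms and these inner products, they are unchanged, so it suffices to treat $H=G=0$. I would also record that the flow preserves unitarity, so $\|U_j\|_\tF=\sqrt{d_1}$, $\|V_j\|_\tF=\sqrt{d_2}$, $d_{jj}=d_1$, $c_{jj}=d_2$, and $U_j^\dg U_j=I$; this both reduces the coupling to $\dot U_j=\frac{\kappa}{N}\sum_k(c_{jk}U_k-c_{kj}U_jU_k^\dg U_j)$ and supplies the uniform a priori bounds used below.

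Next I would differentiate the four functionals along the flow. Using the upper Dini derivative and selecting, at each time, an index pair realizing each maximum, I would compute $\tfrac{d}{dt}d_{pq}$ and $\tfrac{d}{dt}c_{pq}$, and via $\|U_p-U_q\|_\tF^2=2(d_1-\mathrm{Re}\,d_{pq})$ the derivatives of the distance functionals as well. The leading term in each case is dissipative: for the $\mathcal{U}$-functionals it is proportional to the effective coupling $\kappa\cdot\frac1N\sum_k\mathrm{Re}\,c_{jk}$, which is close to $\kappa d_2$ once $\mathcal{V}$ is nearly aggregated, and symmetrically $\approx\kappa d_1$ for the $\mathcal{V}$-functionals. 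The obstruction is that the derivatives also produce quartic trace terms such as $\mathrm{tr}(U_p^\dg U_q U_k^\dg U_q)$ that are not closed in the $d_{ij}$. I would control these by writing $U_k=U_q+(U_k-U_q)$ (and similarly in the other slots), so that the leading piece recombines into the dissipative term and the remainder is bounded by $\|U_k-U_q\|_\tF\le\mathcal{D}(\mathcal{U})$ together with $\|U_j\|_\tF=\sqrt{d_1}$; this is exactly where the factors $\sqrt{d_1}$ and the constants $3d_1+4$, $12d_1+27$ enter. The state-dependent weights $c_{jk},d_{jk}$ are likewise replaced by their extremal values $d_2$ (resp. $d_1$) plus an error controlled by $\mathcal{S}(\mathcal{V})$ (resp. $\mathcal{S}(\mathcal{U})$), which is what couples the four inequalities together.

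Summing the four estimates, I expect to reach a Riccati-type differential inequality of the schematic form
\[
\frac{d}{dt}\mathcal{L}(\mathcal{U},\mathcal{V})\ \le\ -\kappa\,\mathcal{L}(\mathcal{U},\mathcal{V})\Big(6(d_2-4\sqrt{d_1})-(12d_1+27)\,\mathcal{L}(\mathcal{U},\mathcal{V})-2(3d_1+4)\,\mathcal{L}(\mathcal{U},\mathcal{V})^2\Big),
\]
where the bracketed quadratic is positive precisely when $\mathcal{L}$ lies strictly below its positive root, and that root is exactly the threshold in the hypothesis. A continuity/bootstrap argument then finishes the proof: since $\mathcal{L}^0$ is below the root the right-hand side is initially negative, so $\mathcal{L}$ decreases; because the bracket is decreasing in $\mathcal{L}$ it stays bounded below by its value at $\mathcal{L}^0$, the sub-threshold region is forward invariant, and one obtains $\mathcal{L}(t)\le\mathcal{L}^0 e^{-c\kappa t}\to 0$ with $c=6(d_2-4\sqrt{d_1})-(12d_1+27)\mathcal{L}^0-2(3d_1+4)(\mathcal{L}^0)^2>0$.

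The main obstacle is the middle step: honestly tracking the quartic trace remainders and the $\mathcal{S}$-errors in the weights so that the four scalar inequalities close with the correct signs and constants. In particular, the requirement $d_2>4\sqrt{d_1}$ is precisely the condition that the dissipation of the $\mathcal{U}$-ensemble (whose effective coupling is the smaller quantity $\approx\kappa d_2$) dominates the disruption coming from the $O(\sqrt{d_1})$ Frobenius-norm bounds; obtaining the constant $4$ rather than something larger forces these estimates to be kept as tight as possible.
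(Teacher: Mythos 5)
The paper does not actually prove this proposition: it is quoted from \cite{H-K-P2} and explicitly stated ``without proofs,'' so there is no in-paper argument to compare yours against. Judged on its own terms, your plan is the standard one for this family of results and is structurally consistent with the statement --- in particular, you correctly recognize that the admissible threshold for $\mathcal{L}^0$ is the positive root of $2(3d_1+4)x^2+(12d_1+27)x-6(d_2-4\sqrt{d_1})=0$, which pins down the Riccati-type inequality $\frac{\d}{\dt}\mathcal{L}\leq-\kappa\,\mathcal{L}\,\bigl(6(d_2-4\sqrt{d_1})-(12d_1+27)\mathcal{L}-2(3d_1+4)\mathcal{L}^2\bigr)$ you aim for, and the rotating-frame reduction, the preservation of unitarity, and the final forward-invariance/exponential-decay bootstrap are all sound.

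The genuine gap is the one you name yourself: the derivation of the four coupled differential inequalities for $\mathcal{D}(\mathcal{U})$, $\mathcal{S}(\mathcal{U})$, $\mathcal{D}(\mathcal{V})$, $\mathcal{S}(\mathcal{V})$ with exactly the constants $3d_1+4$, $12d_1+27$ and the factor $4\sqrt{d_1}$ is asserted but not carried out, and that is where essentially all of the work lies. Two points deserve particular care there. First, $\mathcal{S}(\mathcal{U})=\max_{i,j}|d_1-d_{ij}|$ involves the full complex quantity $d_{ij}=\langle U_i,U_j\rangle_\tF$, not just its real part, whereas $\|U_i-U_j\|_\tF^2=2(d_1-\mathrm{Re}\,d_{ij})$ only sees the real part; your sketch passes back and forth between the two without explaining how the imaginary part of $d_{ij}$ is dissipated, and this is precisely why $\mathcal{S}$ must be carried as a separate functional rather than absorbed into $\mathcal{D}$. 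Second, the asymmetry $d_1\geq d_2>4\sqrt{d_1}$ means the two ensembles do not play symmetric roles (the $\mathcal{U}$-equations are weighted by $c_{jk}\approx d_2$ while the $\mathcal{V}$-equations are weighted by $d_{jk}\approx d_1$), so when you sum the four inequalities the weakest dissipation rate $\kappa d_2$ must still dominate error terms of size $O(\sqrt{d_1})$ coming from the larger ensemble; your heuristic explanation of where $d_2>4\sqrt{d_1}$ comes from is plausible, but until the traces are actually estimated one cannot confirm that the inequality closes with these constants rather than stricter ones. As it stands this is a credible proof outline matching the strategy of the cited reference, not a complete proof.
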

Next, we present emergence of state-locking and orbital stability of \eqref{B-3-1}.
\begin{proposition} \label{P2.4} \cite{H-K-P2} \emph{(Heterogeneous ensemble)}
Suppose system parameters and initial data satisfy 
\[ d_1 \geq d_2,\quad \mathcal D(\mathcal H)\geq \mathcal D(\mathcal G) > 0, \quad \kp_1 \gg 1,  \quad  \max\{ \mathcal L^0, \tilde{\mathcal L}^0\} \ll 1,  \]
and let $\{(U_i,V_i)\}$ and $\{(\tilde U_i, \tilde V_i)\}$ be any two solutions to \eqref{B-3-1}.  Then, the following assertions hold. 
\begin{enumerate}
\item
(Asymptotic state-locking): there exist state-locked states $\{X_i^\infty\}$ and $\{Y_i^\infty\}$, and unitary matrices $P \in \mathbf{U}(d_1)$ and $Q \in  \mathbf{U}(d_2)$  such that
\[ \lim_{t\to\infty} \| U_i(t) - X_i^\infty P\|_\tF = 0  \quad \textup{and} \quad  \lim_{t\to \infty} \|V_i(t) - Y_i^\infty Q\|_\tF = 0. \]
\item
(Orbital stability): there exist unitary matrices $X^\infty \in {\mathbf U}(d_1)$ and $Y^\infty \in {\mathbf U}(d_2)$  such that
\[  \lim_{t\to\infty} \|\tilde U_i(t) - U_i(t) X^\infty\|_\tF =0 \quad \textup{and} \quad  \lim_{t\to\infty} \|\tilde V_i(t) - V_i(t) Y^\infty\|_\tF =0. \]
\end{enumerate}
\end{proposition}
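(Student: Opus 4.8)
The plan is to treat the heterogeneous regime as a strong-coupling perturbation of the completely aggregated state from Proposition \ref{P2.3}:~when the coupling strength $\kappa$ is large and the initial configuration is tight ($\mathcal{L}^0 \ll 1$), the cubic interaction contracts the ensemble faster than the mismatched free flows $\{H_j\}$, $\{G_j\}$ can disperse it, so the trajectory is trapped in a small neighborhood of the aggregated manifold and its relative configuration eventually freezes. I would split the argument into three stages:~(i) a forward-invariance (trapping region) estimate for the diameter functionals, (ii) an asymptotic state-locking argument, and (iii) orbital stability via the gauge symmetry of \eqref{B-3-1}.

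For stage (i), I would first use that each $U_j$ stays in $\mathbf{U}(d_1)$ and each $V_j$ in $\mathbf{U}(d_2)$, so that $\|U_j\|_\tF^2 = d_1$ and $\|V_j\|_\tF^2 = d_2$ are conserved and the state-dependent strengths $\langle V_j, V_k\rangle_\tF$, $\langle U_j, U_k\rangle_\tF$ are controlled by $\mathcal{S}(\mathcal{V})$ and $\mathcal{S}(\mathcal{U})$. Differentiating the pairwise quantities $\|U_i-U_j\|_\tF$ and $d_{ij}=\langle U_i,U_j\rangle_\tF$ along \eqref{B-3-1} (and their $\mathcal{V}$-analogues), I would extract a coupled system of Dini differential inequalities of the schematic form
\[
\frac{d}{dt}\mathcal{L} \;\leq\; -\,c_1\,\kappa\,\mathcal{L} \;+\; c_2\,\kappa\,\mathcal{L}^2 \;+\; c_3\big(\mathcal{D}(\mathcal{H})+\mathcal{D}(\mathcal{G})\big),
\]
in which large $\kappa$ provides a linear contraction while the heterogeneity enters only as an $O(1)$ forcing (the free-flow contribution to $\tfrac{d}{dt}\|U_i-U_j\|_\tF^2$ splits into a norm-preserving rotation plus a term of size $\mathcal{D}(\mathcal{H})$). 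Under $d_1 \geq d_2$ and the smallness/largeness hypotheses, a trapping argument then shows $\mathcal{L}(t)$ remains of size $O\big((\mathcal{D}(\mathcal{H})+\mathcal{D}(\mathcal{G}))/\kappa\big)$ for all $t\geq 0$; in particular $d_{ij}$ and $c_{ij}$ stay near their maximal values $d_1$, $d_2$, so the two ensembles never decohere.

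With the flow confined to this small region, in stage (ii) I would track the relative states $U_iU_j^\dagger$ and $V_iV_j^\dagger$. The trapping estimate makes the coupling-induced ``disagreement'' contribution to $\tfrac{d}{dt}(U_iU_j^\dagger)$ exponentially small and integrable in $t$; after removing the common rotational drift produced by the free flows, the relative configurations are therefore Cauchy and converge, which yields the locked states $\{X_i^\infty\}$, $\{Y_i^\infty\}$ together with the common factors $P$, $Q$. For stage (iii), I would use the fact---checked by direct substitution and the conjugation-invariance of the trace---that right-multiplication $(U_j,V_j)\mapsto(U_jR_1,V_jR_2)$ by constant unitaries sends solutions of \eqref{B-3-1} to solutions, because the cubic terms transform covariantly and $\langle V_jR_2,V_kR_2\rangle_\tF=\langle V_j,V_k\rangle_\tF$. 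Comparing the two solutions through $U_i^\dagger\tilde{U}_i$ and $V_i^\dagger\tilde{V}_i$ and deriving a Gronwall inequality for $\max_i\|\tilde{U}_i-U_iX^\infty\|_\tF$ (and its $\mathcal{V}$-counterpart) with suitable common limits $X^\infty$, $Y^\infty$, I would obtain decay to $0$, which is exactly orbital stability.

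The hard part will be stage (i). Because the $U$- and $V$-ensembles interact only through the state-dependent strengths $\langle V_j,V_k\rangle_\tF$ and $\langle U_j,U_k\rangle_\tF$, the inequalities for $\mathcal{D}(\mathcal{U})$ and $\mathcal{D}(\mathcal{V})$ are mutually entangled and must be closed simultaneously as a coupled system rather than separately; the cross terms are controlled only as long as the inner products stay uniformly positive (near $d_1$, $d_2$), which is precisely what the smallness of $\mathcal{L}^0$ and the dimensional bound $d_1\geq d_2$ are there to secure. The non-smoothness of the max-type functionals $\mathcal{D}$, $\mathcal{S}$ adds bookkeeping (handled through one-sided Dini derivatives), but once the coupled trapping estimate is established, stages (ii) and (iii) follow from fairly standard exponential-contraction and gauge-symmetry arguments.
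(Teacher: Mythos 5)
The paper does not actually prove Proposition \ref{P2.4}; it is quoted from \cite{H-K-P2} without proof, so the only internal point of comparison is Section \ref{sec:6.3}, where the authors carry out the analogous argument (Lemma \ref{L6.2} and Theorem \ref{T6.3}) for the coupled sphere--$\mathbf{SO}(n)$ system following the strategy of \cite{H-R}. Your stage (i) matches that template well: a coupled system of Dini differential inequalities for the diameter functionals, closed simultaneously using the positivity of the inner products, with large $\kappa$ beating the $O(\mathcal D(\mathcal H))+O(\mathcal D(\mathcal G))$ forcing. This part of your outline is sound.

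The genuine gap is in your stage (ii), and it propagates into stage (iii). You claim that, once the ensemble is trapped, the disagreement contribution to $\tfrac{d}{dt}(U_iU_j^\dagger)$ is integrable and hence the relative configurations are Cauchy ``after removing the common rotational drift.'' But in the heterogeneous case the free-flow contribution is $-\mathrm{i}H_i(U_iU_j^\dagger)+\mathrm{i}(U_iU_j^\dagger)H_j$, which is \emph{not} a common drift that can be factored out (it depends on both $i$ and $j$ through distinct $H_i\neq H_j$) and is of persistent size $O(\mathcal D(\mathcal H))$, not integrable in $t$; so the Cauchy argument as you state it does not close. Moreover, your stage (iii) Gronwall inequality for $\max_i\|\tilde U_i-U_iX^\infty\|_\tF$ presupposes the limit $X^\infty$, which you only obtain from stage (ii) --- the logic is circular as written. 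The route actually used in \cite{H-K-P2} and reproduced in Section \ref{sec:6.3} of this paper reverses the order: one first proves \emph{orbital stability} by deriving a Gronwall inequality for the gauge-invariant distance $d(\mathcal U,\tilde{\mathcal U})=\max_{i,j}\|U_iU_j^\dagger-\tilde U_i\tilde U_j^\dagger\|_\tF$ between two arbitrary solutions (here the heterogeneity cancels in the comparison, and the residual forcing $\|\varepsilon_{ij}-\tilde\varepsilon_{ij}\|_\tF$ is controlled by the stage-(i) decay, so the Gronwall-type lemma with an $L^1$ source applies), and then deduces \emph{state-locking} by applying that estimate to the time-shifted solution $\tilde U_j(t)=U_j(t+T)$, using autonomy and compactness of the unitary group to conclude that $\{U_i(n)U_j^\dagger(n)\}_n$ is Cauchy. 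You should restructure stages (ii)--(iii) in that order; with that change the argument goes through.
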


\vspace{0.2cm}

Similar to Proposition \ref{P2.2}, we finally state the relation between the double matrix model \eqref{B-3-1} and the LT model in the following proposition.
\begin{proposition} \label{P2.5} 
\emph{\cite{H-K-P2}} Systems \eqref{LT} and \eqref{B-3-1} are equivalent in the following sense:
 \begin{enumerate}
 \item
 Suppose $\{(U_i,V_i) \}$ is a solution to \eqref{B-3-1} with the initial data  $\{(U^0_i,V^0_i) \}$. Then, a rank-4 tensor  $T_i$ defined by $T_i  :=U_i \otimes V_i$ is a solution to \eqref{LT} with a well-prepared free flow tensor $A_i$ and the initial data $T_j^0 =  U^0_i \otimes V^0_i$.
 \vspace{0.1cm}
\item
 Suppose a rank-4 tensor $T_i$ is a solution to \eqref{LT}  with a suitable $A_j$ and quadratically separable initial data:
 \begin{equation*} \label{C-3}
 T_i^0 =: U_i^0 \otimes V_i^0, \quad  j \in [N],
 \end{equation*}
 for  rank-2 tensors $U_i^0 \in \bbc^{d_1\times d_2}$ and $V_i^0 \in \bbc^{d_3 \times d_4}$ with unit norms. Then, there exist two matrices $U_i=U_i(t)$ and $V=V_i(t)$ with unit norms such that 
 \[  T_i(t) = U_i(t) \otimes V_i(t), \quad t>0, \]
 where $(U_i,V_i)$ is a solution to \eqref{B-3-1} with the initial data  $(U_i,V_i)(0) = (U_i^0,V_i^0)$.
 \end{enumerate}
  \end{proposition}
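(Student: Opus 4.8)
The plan is to mirror the strategy used for Proposition \ref{P2.2}, exploiting the multiplicativity of the Frobenius inner product under tensor products and the factorization of the rank-$4$ cubic couplings into slot-wise pieces. I would first establish the forward direction (1) by direct substitution, and then deduce the converse (2) essentially for free from (1) together with uniqueness for the Cauchy problem \eqref{LT}.

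For part (1), I would begin by fixing the parameters of the target LT model: set all coupling strengths to zero except
\[
\kappa_{(0,1,1,1)} = \kappa_{(1,1,0,1)} = \kappa,
\]
and define the rank-$8$ free flow tensor $A_j$ by the Kronecker-sum prescription
\[
A_j T_j := (-{\mathrm i} H_j U_j)\otimes V_j + U_j \otimes (-{\mathrm i} G_j V_j), \qquad T_j = U_j \otimes V_j,
\]
which is the rank-$4$ analogue of the identification \eqref{B-3}. A short computation shows that $A_j$ so defined is block skew-Hermitian precisely because $H_j$ and $G_j$ are Hermitian, so it is an admissible natural frequency tensor. Next I would set $T_j := U_j \otimes V_j$, differentiate by the Leibniz rule $\dot T_j = \dot U_j \otimes V_j + U_j \otimes \dot V_j$, and insert \eqref{B-3-1}. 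The linear parts reproduce $A_j T_j$ by construction, so the heart of the matter is matching the cubic terms.

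The key structural observation is that, writing $[T_j]_{\alpha_{*0}} = [U_j]_{\alpha_{10}\alpha_{20}}[V_j]_{\alpha_{30}\alpha_{40}}$ with slots $1,2$ carrying $U$ and slots $3,4$ carrying $V$, each LT cubic summand for a fixed $i_* \in \{0,1\}^4$ factors as a product of a ``$U$-part'' in slots $1,2$ and a ``$V$-part'' in slots $3,4$. For $i_* = (0,1,1,1)$ the $U$-part collapses to $[U_kU_j^\dagger U_j]_{\alpha_{10}\alpha_{20}}$, while the two fully contracted $V$-slots produce exactly the scalar $\langle V_j, V_k\rangle_\tF$ multiplying $[V_j]_{\alpha_{30}\alpha_{40}}$; this is precisely the gain term of the $U$-equation in \eqref{B-3-1} tensored with $V_j$, and the loss term matches identically, yielding $\langle V_k, V_j\rangle_\tF\,(U_jU_k^\dagger U_j)\otimes V_j$. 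By the symmetric choice $i_* = (1,1,0,1)$ one recovers the contribution $U_j \otimes (V_kV_j^\dagger V_j)$ carrying $\langle U_j, U_k\rangle_\tF$, the Frobenius factor now emerging from the fully contracted slots $1,2$. Summing over the two selected multi-indices reproduces \eqref{LT} exactly, and the norm bookkeeping follows from $\|U_j^0 \otimes V_j^0\|_\tF = \|U_j^0\|_\tF \|V_j^0\|_\tF$. I expect this index bookkeeping — identifying the two admissible $i_*$ and verifying that the fully contracted slots generate the Frobenius factors — to be the only genuine obstacle; everything else is routine.

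For part (2), I would argue by existence-plus-uniqueness rather than by a fresh calculation. Given the suitable (Kronecker-sum) $A_j$, read off the Hermitian data $H_j, G_j$, and let $(U_j, V_j)$ be the unique solution of \eqref{B-3-1} with initial data $(U_j^0, V_j^0)$. By part (1), $\widetilde T_j := U_j \otimes V_j$ solves \eqref{LT} with the same $A_j$ and the same initial datum $T_j^0 = U_j^0 \otimes V_j^0$. Since the right-hand side of \eqref{LT} is polynomial, hence locally Lipschitz, the Cauchy problem admits a unique solution; therefore $T_j(t) = \widetilde T_j(t) = U_j(t)\otimes V_j(t)$ for all $t$, which is the desired factorization.
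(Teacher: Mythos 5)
Your argument is correct. Note that the paper itself states Proposition \ref{P2.5} without proof, quoting it from \cite{H-K-P2}; but the computation you give is precisely the specialization to $m_1=m_2=2$ of the paper's own general derivation in Section \ref{sec:4.2} (the passage from \eqref{D-6} to \eqref{D-18} and Theorem \ref{T4.1}): your choice $\kappa_{(0,1,1,1)}=\kappa_{(1,1,0,1)}=\kappa$ with all other entries zero is exactly the fusion $\mathfrak{K}^1\star_c\mathfrak{K}^2$ of two generalized Lohe matrix coupling tensors, and your Kronecker-sum free flow is $A_j^1\star_\tF A_j^2$. The index bookkeeping you describe — the fully contracted $V$-slots producing $\langle V_j,V_k\rangle_\tF$ for $i_*=(0,1,1,1)$ and the fully contracted $U$-slots producing $\langle U_j,U_k\rangle_\tF$ for $i_*=(1,1,0,1)$ — checks out, and deducing part (2) from part (1) via uniqueness for the (polynomial, hence locally Lipschitz, norm-preserving) Cauchy problem is the standard route taken in \cite{H-K-P1, H-K-P2} for these separability statements.
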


\section{Characteristic symbol for the Cauchy problem}\label{sec:3}
\setcounter{equation}{0}
In this section, we provide a concept of characteristic symbol for the Cauchy problem to the LT model \eqref{LT} via system parameters  and initial data. \newline

Recall the Cauchy problem to the LT model for rank-$m$ tensors:
\begin{align}\label{C-1}
\begin{cases}
[\dot{T}_j]_{\alpha_{*0}}=[A_j]_{\alpha_{*0}\alpha_{*1}}[T_j]_{\alpha_{*1}}\\
+\displaystyle\sum_{i_*\in\{0, 1\}^m}\frac{\kappa_{i_*}}{N}\sum_{k=1}^N\left([T_k]_{\alpha_{*i_*}}[\bar{T}_j]_{\alpha_{*1}}[T_j]_{\alpha_{*(1-i_*)}}-[T_j]_{\alpha_{*i_*}}[\bar{T}_k]_{\alpha_{*1}}[T_j]_{\alpha_{*(1-i_*)}}\right),~
t > 0 ,\vspace{0.2cm}\\
T_j(0)=T_j^0\in \bbc^{d_1\times d_2\times\cdots \times d_m},\quad \|T_j^0\|_\tF=1,
\end{cases}
\end{align}
where $A_j\in\bbc^{d_1\times \cdots\times d_m\times d_1\times \cdots\times d_m}$ is a rank-$2m$ natural frequency tensor satisfying block skew-hermitian property:
\begin{equation}\label{C-2}
[A_j]_{\alpha_{*0}\alpha_{*1}}=-[\bar{A}_j]_{\alpha_{*1}\alpha_{*0}},\quad \alpha_{*0} = (\alpha_{10}, \cdots, \alpha_{m0}),\quad \alpha_{*1} = (\alpha_{11}, \cdots, \alpha_{m1}) \in \prod_{i=1}^m[d_i].
\end{equation}

\subsection{Four components of characteristic symbol} \label{sec:3.1} 
In this subsection, we introduce four components consisting of characteristic symbol for  the Cauchy problem \eqref{C-1}--\eqref{C-2}. In what follows, we fix the number of particles $N$. \newline

\noindent$\bullet$ (Size vector and rank):~Since  a solution $T_j$ to \eqref{C-1} belongs to  $\bbc^{d_1\times d_2\times\cdots \times d_m}$, we can naturally associate \textit{a size vector} and \textit{a rank} to \eqref{C-1}:
\begin{align*}
&\mbox{(Size vector)}~~\mathbf{d}:=(d_1, d_2, \cdots, d_m)\in  \bbn^m; \\
&\mbox{(Rank)}~~m:=\textup{number of components in the size vector}.
\end{align*}
For simplicity, we use the notation $m:=\textup{dim}(\mathbf d)$. Here, we assume that $d_i \geq 2$ for $i\in [m]$. Among $d_1,d_2,\cdots,d_m$, if there is one index $k \in [m]$ such that $d_k=1$, for instance, $d_m=1$, then we regard  a rank $m$-tensor $T_i$ as a rank $m-1$-tensor $\tilde T_i$ by the natural reduction:
\[
[\tilde{T}]_{\alpha_1\alpha_2\cdots\alpha_{m-1}}=[T]_{\alpha_1\alpha_2\cdots\alpha_m}.
\]
This basically holds since $\bbc^{d_1\times \cdots d_{m-1}\times 1} =\bbc^{d_1\times \cdots d_{m-1}}$. Thus, there is no information loss from the reduction above. 
In particular, when scalar is given, then we naturally set the rank to be zero, i.e., $m=0$. Note that this is reminiscent of the relation $0!=1$.

\vspace{0.2cm}

\noindent$\bullet$ (Coupling strength tensor): since $\kappa_{i_*}$ is defined for all $i_*\in\{0, 1\}^m$, we call $\mathfrak{K}:=(\kappa_{i_*})_{i_*\in\{0,1\}^m}$  as \textit{a coupling strength tensor} associated to \eqref{C-1}. Note that $\mathfrak{K}$ is a rank-$m$ tensor with $2^m$ elements. 

\vspace{0.2cm}

\noindent$\bullet$ (Set of natural frequency tensors): we denote  $\{A_j\}_{j=1}^N$ by \textit{a set of natural frequency rank-$2m$ tensors}. 

\vspace{0.2cm}

\noindent$\bullet$ (Initial configuration): we set the \textit{initial configuration} to be  $\{T_j^0\}_{j=1}^N$.\\


For the simplicity of presentation, we define the index set and the set of natural frequency tensors. 
\begin{definition} \label{D3.1}
Let $(\mathbf{d} = (d_1,d_2,\cdots,d_m),~m)$ be the pair of a size vector and a rank associated to \eqref{C-1}.
\begin{enumerate}
\item
The index set of  $\bd$ is given as follows:
\[
\mathfrak{I}(\mathbf{d}):=\{\mathbf{e}=(e_1, e_2, \cdots, e_m)\in\bbn^m: 1\leq e_i\leq d_i,\quad   1\leq i\leq m\}.
\]
\item
The set of natural frequency tensors defined on $\bbc^{d_1\times \cdots\times d_m\times d_1\times \cdots\times d_m}$ is as follows:
\[
\mathfrak{F}(\mathbf{d}):=\{A\in \bbc^{d_1\times \cdots\times d_m\times d_1\times \cdots\times d_m}: [A]_{\alpha_{*0}\alpha_{*1}}=-[\overline{A}]_{\alpha_{*1}\alpha_{*0}},\quad \alpha_{*0},~\alpha_{*1}\in \mathfrak{I}(\mathbf{d})\}.
\]
 Lastly, we denote the set of all admissible natural frequency tensors by
\[
\mathfrak{F}:=\bigcup_{\mathbf{d}\in\bigcup_{k=0}^\infty \bbn^k}\mathfrak{F}(\mathbf{d}).
\]
\end{enumerate}
\end{definition}

\vspace{0.2cm}

Now, we are ready to define the characteristic symbol \eqref{C-1} for the Cauchy problem of the LT model consisting  of a  size vector,  a coupling strength tensor, a  set of natural frequency tensors and initial configuration.

\begin{definition} \label{D3.2} 
Let $\mathbf{d}, \mathfrak{K}, \{A_j\}$ and $\{T_j^0\}$ be the size vector, coupling strength tensor, set of natural frequency tensors and initial configuration associated to the Cauchy problem \eqref{C-1}, respectively. Then, we call ${\mathfrak C}:=(\mathbf{d}, \mathfrak{K}, \{A_j\}, \{T_j^0\}) $ as the characteristic symbol for the Cauchy problem \eqref{C-1}. 
\end{definition}
Since the characteristic symbol contains all  the information of a given Cauchy problem for the LT model, Cauchy problem is uniquely determined (under some equivalent relation) by a  characteristic symbol. 

\begin{definition} \label{D3.3}
We define a set of all characteristic symbols with a size vector $\bd$ and a rank $m=\textup{dim}(\mathbf d)$ as follows: for $\mathbf{d}\in\bbn^m$, 
\[
\mathfrak{D}_m^\mathbf{d}:= \left \{(\mathbf{d}, \mathfrak{K}, \{A_j\}, \{T_j^0\}): \mathfrak{K} \in\bbr^{2^m},~ \{A_j\}_{j=1}^N\in \mathfrak{F}(\mathbf{d})^N,~ \{T_j^0\}\in \bbc^{\mathbf{d}} \right \}.
\]
Then, it becomes a set of all characteristic symbols to the LT models with the size vector $\mathbf{d}$. We also denote
\[
\mathfrak{D}_m:=\bigcup_{\mathbf{d}\in\bbn^m} \mathfrak{D}_m^{\mathbf{d}}\quad\textup{and}\quad {\mathfrak D}:=\bigcup_{m=1}^\infty \mathfrak{D}_m.
\]
Note that  $\mathfrak{D}_m$ is a set of all characteristic symbols with rank $m$, and ${\mathfrak D}$ becomes the space of  all characteristic symbols.
\end{definition}

%

\subsection{Explicit examples} \label{sec:3.2} 
In this subsection, we provide explicit examples for the well-known low-rank models that can be reduced from the LT model such as the Kuramoto model, swarm sphere model, etc. It should be mentioned that the low-rank models are {\it not} the LT model as it is, and they are merely the variants of the LT model. Thus, their characteristic symbols would not be unique. 

\vspace{0.5cm}

\noindent $\bullet$~{\bf The Lohe hermitian sphere  model}: First, we begin with the Lohe hermitian sphere model \cite{H-P1} on $\bbc^d$, which is the LT model with rank-1 tensors, i.e., $m=1$:
\begin{align}\label{C-5}
\begin{cases}
\displaystyle\dot{z}_j=\Omega_jz_j+\frac{\kappa_0}{N}\sum_{k=1}^N(z_k-\langle z_k, z_j\rangle z_j)+\frac{\kappa_1}{N}\sum_{k=1}^N(\langle z_j, z_k\rangle-\langle z_k, z_j\rangle)z_j,\quad t>0,\\
z_j(0)=z_j^0\in\bbc^{d},\quad \|z_j^0\|=1,\quad j \in [N],
\end{cases}
\end{align}
where $\Omega_j$ is a $d \times d$   skew-Hermitian matrix, and $\langle w, z\rangle$ is a complex inner product between two complex vectors:
\[
\langle w, z\rangle :=\sum_{\alpha=1}^{d} [\overline{w}]_\alpha [z]_\alpha, \quad \|\omega \| := \sqrt{\langle \omega,\omega\rangle}.
\]
Then, the characteristic symbol for \eqref{C-5} is given as follows: 
\begin{equation*} \label{C-5-1}
\mathfrak{C} =(d, (\kappa_0, \kappa_1), \{\Omega_j\}, \{z_j^0\}).
\end{equation*} 
Note that since $z_j$ is a rank-1 tensor with $d$ components, the size vector becomes $d$, and the coupling strength tensor is also a rank-1 tensor with $2^1$ elements. 

\vspace{0.5cm}

\noindent $\bullet$~{\bf The Kuramoto model}:~Recall the Kuramoto model which is an aggregation model on the space of rank-$0$ tensors:
\begin{equation} \label{C-3}
\begin{cases}
\displaystyle\dot{\theta}_j=\nu_j+\frac{\kappa}{N}\sum_{k=1}^N\sin(\theta_k-\theta_j), \quad t > 0, \\
\theta_j(0)=\theta_j^0, \quad j \in [N].
\end{cases}
\end{equation}
In order to relate the Kuramoto model \eqref{C-3} with the LT model, we set in \eqref{C-5}
\begin{equation} \label{C-3-1}
 z_j = e^{\mi \theta_j},\quad \theta_j \in \bbr,\quad \Omega_j = \mi \nu_j.
\end{equation} 
Then, one can check that \eqref{C-5} with \eqref{C-3-1} reduces to \eqref{C-3}. Since $\theta_j$ is scalar, the characteristic symbol of \eqref{C-3} is 
\[
\mathfrak{C} = \left(\emptyset, \frac{\kappa}{2}  ,\{\mathrm{i}\nu_j\}, \{e^{\mathrm{i}\theta_j^0}\} \right).
\]

\vspace{0.5cm}

\noindent $\bullet$~{\bf The swarm sphere model}: If we restrict our discussion to $\bbr^{d}$ instead of $\bbc^{d}$, then the coupling term involving with $\kappa_1$ disappears due to the symmetry of inner product so that system \eqref{C-5} reduces to the swarm sphere model on $\bbs^{d-1}$ \cite{Ol}: 
\begin{align}\label{C-6}
\begin{cases}
\displaystyle\dot{x}_j=\Omega_j x_j+\frac{\kappa}{N}\sum_{j=1}^N(x_k-\langle x_k, x_j\rangle x_j), \quad t > 0, \\
x_j(0)=x_j^0, \quad \|x_j^0 \| = 1, \quad j \in [N].
\end{cases}
\end{align}
We mention that there are many ways how \eqref{C-6} is derived from \eqref{C-5}, and hence the characteristic symbol of \eqref{C-6} would not be unique. Here we introduce one simple reduction of \eqref{C-5} to \eqref{C-6}. Although the initial data $\{z_j^0\}$ belong to $\bbc^d$ so that $z_j=z_j(t)$ also lies in $\bbc^d$, we assume that
\begin{equation} \label{C-6-1}
z_j^0 \in \bbr^d,\quad \textup{$\Omega_j$ is a $d\times d$ skew-symmetric matrix with real entries},\quad j\in [N].
\end{equation}
Then, it follows from the uniqueness of solutions to \eqref{C-5} that 
\[
z_j (t) \in \bbr,\quad t>0,\quad j\in [N]. 
\]
Thus, one of the characteristic symbol for \eqref{C-6} is 
\[
\mathfrak{C} =(d, (\kappa_0, \kappa_1), \{\Omega_j\}, \{z_j^0\}),
\]
together with \eqref{C-6-1}.

\vspace{0.5cm}

\noindent $\bullet$~{\bf The generalized Lohe matrix  model}:~Consider the LT model \eqref{C-1} for rank-2 tensors:
\begin{align} \label{C-7-1} 
\begin{cases}
\vspace{0.2cm}
\displaystyle\dot{T}_j=A_jT_j+\frac{\kp_{00}}{N} \sum_{k=1}^N ( \textup{tr}(T_j^*T_j) T_k - \textup{tr}(T_k^*T_j)T_j)  + \frac{\kappa_{01}}{N}\sum_{k=1}^N(T_kT_j^\dagger T_j-T_j T_k^\dagger T_j) \\
\hspace{2cm}+\displaystyle \frac{\kappa_{10}}{N}\sum_{k=1}^N(T_jT_j^\dagger T_k-T_j T_k^\dagger T_j)+\frac{\kp_{11}}{N}\sum_{k=1}^N \textup{tr}( T_j^*T_k - T_k^*T_j   )T_j, ~t > 0,\\
T_j(0)=T_j^0\in \bbc^{d_1\times d_2},\quad \|T_j^0\|_\tF=1,\quad j\in [N].
\end{cases}
\end{align}
If we turn off the diagonal elements of the coupling strength tensors, i.e., 
\[ \kappa_{00}=\kappa_{11}=0, \]
then \eqref{C-7-1} reduces to the generalized Lohe matrix model \cite{H-P0}:
\begin{align}\label{C-7}
\begin{cases}
\displaystyle\dot{T}_j=A_jT_j+\frac{\kappa_{01}}{N}\sum_{k=1}^N(T_kT_j^\dagger T_j-T_j T_k^\dagger T_j)+\frac{\kappa_{10}}{N}\sum_{k=1}^N(T_jT_j^\dagger T_k-T_j T_k^\dagger T_j),~t > 0,\vspace{0.2cm}\\
T_j(0)=T_j^0\in \bbc^{d_1\times d_2},\quad \|T_j^0\|_\tF=1,
\end{cases}
\end{align}
where $A_j$ is  a rank-4 tensor, and the term $A_jT_j$ can be understood as a rank contraction:
\[[A_jT_j]_{\alpha\beta}:=[A_j]_{\alpha\beta\gamma\delta}[T_j]_{\gamma\delta}. \]
Then, again it follows from the previous argument that the characteristic symbol for \eqref{C-7} becomes 
\begin{equation} \label{C-7-2}
\mathfrak{C}=\left((d_1, d_2), 
\begin{bmatrix}
0&\kappa_{01}\\
\kappa_{10}&0
\end{bmatrix}
, \{A_j\}, \{T_j^0\}\right).
\end{equation} 

\vspace{0.5cm}

\noindent $\bullet$~{\bf The Lohe matrix  model}: We recall the Lohe matrix model \cite{Lo-09}: 
\begin{equation} \label{C-8}
\begin{cases}
\displaystyle \dot U_j = - \mi H_j U_j + \frac{\kp}{2N} \sum_{k=1}^N \Big( U_k U_j^{\dagger} U_j - U_j U_k^\dagger U_j\Big), \\
U_j(0) = U_j^0 \in \mathbf{U}(d),\quad j \in [N].
\end{cases}
\end{equation}
Note that there exists a $d\times d$ Hermitian matrix $H_j$ such that 
\begin{equation}  \label{C-8-1}
-\mi H_j T_j = A_j T_j.
\end{equation}
If we assume the following relation in \eqref{C-7}:
\begin{equation} \label{C-7-2}
d:=d_1=d_2, \quad 2\kappa:=\kappa_{01}+\kappa_{10}, \quad T_i^0 \in \mathbf{U}(d),
\end{equation}
then \eqref{C-7} with  \eqref{C-8-1}-\eqref{C-7-2} becomes \eqref{C-8}. Thus, as in the swarm sphere model, the characteristic symbol for \eqref{C-8} would not be unique. One parameter family of  characteristic symbols corresponding to \eqref{C-8} can be given as follows: for any $\tilde \kp \in \bbr$, 
\[
\mathfrak{C}=\left((d, d), 
\begin{bmatrix}
0& \frac\kappa2-\tilde{\kappa}\\
\tilde{\kappa}&0
\end{bmatrix}
, \{H_j\}, \{U_j^0\}\right).
\]

%
%
%
%

\section{Weak Coupling of two Lohe tensor type models}\label{sec:4}
\setcounter{equation}{0}
In this section, we present a weak coupling of two Lohe tensor models for {\it different} rank tensors. Recall that two examples given in Section \ref{sec:2} dealt  with the weak coupling  of the LT-type models with the same rank tensors, say weak couplings between two ensembles of  rank-1 tensors, or between two ensembles of rank-2 tensors, respectively. Now, we consider a weak coupling of two Lohe tensor type models for {\it different} rank tensors. To obtain an insight for the weak coupling, we review the weakly coupled model in \cite{Lo-09} consisting of the Kuramoto model and the swarm sphere model, and then  present {\it the double tensor model} for two Lohe tensor models. Finally, we introduce a binary operation between two characteristic symbols for the Cauchy problem of the LT model.

\subsection{Coupling of the Kuramoto model and the swarm sphere model} \label{sec:4.1} 
In this subsection, we discuss a weak coupling between the Kuramoto model and the swarm sphere model in $\bbs^3$ . For this, we begin with the Lohe matrix model on $\mathbf{U}(2)$: 
\begin{equation} \label{D-1}
{\mathrm i} {\dot U}_i U_i^\dg = H_i + \frac{{\mathrm i} \kappa}{2N} \sum_{j=1}^{N} \Big(U_j U_i^\dg - U_i U_j^\dg    \Big), \quad i \in [N].
\end{equation}
In this case, we can use the parametrization of $2\times 2$ unitary matrices $U_i$ and $2\times 2$ Hermitian matrices $H_i$ via Pauli's matrices $\{ \sigma_k \}_{k=1}^{3}$:
\[ U_i := e^{-\mathrm{i} \theta_i} \Big( \mathrm{i} \sum_{k=1}^{3} x_i^k \sigma_k + x_i^4 I_2  \Big), \quad H_i: = \sum_{k=1}^{3} \omega_i^k \sigma_k + \nu_i I, \]
where $I$ and $\sigma_i$ are the  Pauli matrices and identity matrix defined by
\[
 I := \left( \begin{array}{cc}
  1 & 0 \\
  0 & 1 \\
  \end{array} \right), \quad 
  \sigma_1 := \left( \begin{array}{cc}
  0 & 1 \\
  1 & 0 \\
  \end{array} \right), \quad \sigma_2 :=  \left( \begin{array}{cc}
  0 & -i \\
  i & 0 \\
  \end{array} \right), \quad \sigma_3 := \left( \begin{array}{cc}
  1 & 0 \\
  0 & -1 \\
  \end{array} \right). \]
If we denote
\[
x_i = (x_i^1,x_i^2,x_i^3,x_i^4),\quad \omega_i = (\omega_i^1, \omega_i^2, \omega_i^3),
\]
then after straightforward   manipulations, one can find the dynamics for $(\theta_i,x_i)$ so that the Lohe matrix model \eqref{D-1} reduces to $5N$ equations for $\theta_i$ and $x_i$:
\begin{equation}
\begin{cases} \label{D-2}
\displaystyle {\dot \theta}_i = \nu_i + \frac{\kappa}{N} \sum_{k=1}^{N}   \langle x_i,x_k \rangle \sin (\theta_k - \theta_i),  \\
\displaystyle {\dot x}_i = \Omega_i x_i + \frac{\kappa}{N} \sum_{k=1}^{N} \cos(\theta_k - \theta_i) (x_k -\langle x_i,x_k \rangle x_i), \quad i\in [N],
\end{cases}
\end{equation}
where $\Omega_i$ is a real $4 \times 4$ skew-symmetric matrix given by
\[ \Omega_i := \left( \begin{array}{cccc}
0 & -\omega_i^3 & \omega_i^2 & -\omega_i^1 \\
\omega_i^3 & 0 & -\omega_i^1 & -\omega_i^2 \\
-\omega_i^2 & \omega_i^1 & 0 & -\omega_i^3 \\
\omega_i^1 & \omega_i^2 &\omega_i^3 & 0 \\
\end{array}
\right),\quad i\in [N]. \]
For the emergent dynamics of \eqref{D-2}, we refer the reader to \cite{C-H2}. 

\subsection{The double tensor model} \label{sec:4.2} 
In this subsection, we present a weak coupling of two LT models for possibly different rank tensors. As discussed in previous section, we can associate a unique characteristic symbol for each  Cauchy problem to the   LT model.  To fix the idea, we consider two characteristic symbols:  
\begin{equation*} \label{D-3}
\mathfrak{C}_1=(\bfd^1, \mathfrak{K}^1,\{A^1_j\},\{(T_j^1)^0\})\quad \textup{and} \quad \mathfrak{C}_2=(\bfd^2, \mathfrak{K}^2,\{A^2_j\},\{(T_j^2)^0\}).
\end{equation*}
Now, let $\{T_j^1\}$ and $\{T_j^2\}$ be two solutions to the Cauchy problems of the LT models corresponding to $\mathfrak{C}_1$ and  $\mathfrak{C}_2$, respectively: 
\begin{align*}\label{D-4}
\begin{cases}
\displaystyle[\dot{T}_j^1]_{\beta_{*0}}=[A_j^1]_{\beta_{*0}\beta_{*1}}[T_j^1]_{\beta_{*1}} +\sum_{j_*\in\{0, 1\}^{m_1}} \frac{\kappa_{j_*}^1}{N}   \\
\displaystyle \hspace{1.7cm}  \times\sum_{k=1}^N \left([T_k^1]_{\beta_{*j_*}} [\overline{{T}_j^1}]_{\beta_{*1}}[T_j^1]_{\beta_{*(1-j_*)}}-
[T_j^1]_{\beta_{*j_*}} [\overline{{T}_k^1}]_{\beta_{*1}}[T_j^1]_{\beta_{*(1-j_*)}}\right),\quad t >0,\\
T_j^1(0)=(T_j^1)^0, \quad \|(T_j^1)^0\|_\tF = 1, \quad j \in [N],
\end{cases}
\end{align*}
and
\begin{equation}\label{D-5}
\begin{cases}
\displaystyle[\dot{T}_j^2]_{\gamma_{*0}}=[A_j^2]_{\gamma_{*0}\gamma_{*1}}[T_j^1]_{\gamma_{*1}} +\sum_{k_*\in\{0, 1\}^{m_2}}\frac{\kappa_{k_*}^2}{N}   \\
\displaystyle \hspace{1.6cm} \times \sum_{k=1}^N\left([T_k^2]_{\gamma_{*k_*}} [\overline{{T}_j^2}]_{\gamma_{*1}}[T_j^2]_{\gamma_{*(1-k_*)}}-
[T_j^2]_{\gamma_{*k_*}} [\overline{{T}_k^2}]_{\gamma_{*1}}[T_j^2]_{\gamma_{*(1-k_*)}}\right),\quad t > 0, \\
T_j^2(0)=(T_j^2)^0, \quad \|(T_j^2)^0\|_\tF = 1, \quad j \in [N].
\end{cases}
\end{equation}
Motivated by weakly coupled models \eqref{B-1} and \eqref{B-3-1} in Section \ref{sec:2}, we propose the double tensor model as follows: for $t>0$, 
\begin{align}\label{D-6}
\begin{cases}
\displaystyle[\dot{T}_j^1]_{\beta_{*0}}=[A_j^1]_{\beta_{*0}\beta_{*1}}[T_j^1]_{\beta_{*1}}+\sum_{j_*\in\{0, 1\}^{m_1}}\frac{\kappa_{j_*}^1}{N} \\
\displaystyle \hspace{0.2cm} \times \sum_{k=1}^N  \Big(\langle T_j^2, T_k^2\rangle_\tF[T_k^1]_{\beta_{*j_*}} [\overline{{T}_j^1}]_{\beta_{*1}}[T_j^1]_{\beta_{*(1-j_*)}} -\langle T_k^2, T_j^2\rangle_\tF[T_j^1]_{\beta_{*j_*}} [\overline{{T}_k^1}]_{\beta_{*1}}[T_j^1]_{\beta_{*(1-j_*)}}\Big),\\
\displaystyle[\dot{T}_j^2]_{\gamma_{*0}}=[A_j^2]_{\gamma_{*0}\gamma_{*1}}[T_j^1]_{\gamma_{*1}}
+\sum_{k_*\in\{0, 1\}^{m_2}} \frac{\kappa_{k_*}^2}{N}  \\
\displaystyle \hspace{0.2cm} \times \sum_{k=1}^N \Big(\langle T_j^1, T_k^1\rangle_\tF[T_k^2]_{\gamma_{*k_*}} [\overline{{T}_j^2}]_{\gamma_{*1}}[T_j^2]_{\gamma_{*(1-k_*)}} -\langle T_k^1, T_j^1\rangle_\tF[T_j^2]_{\gamma_{*k_*}} [\overline{{T}_k^2}]_{\gamma_{*1}}[T_j^2]_{\gamma_{*(1-k_*)}}\Big), \\
(T_j^1, T_j^2)(0)=((T_j^1)^0, (T_j^2)^0),\quad  \|(T_j^1)^0\|_\tF = 1, \quad \|(T_j^2)^0\|_\tF = 1,  \quad j \in [N]. 
\end{cases}
\end{align}
Next, we consider a larger tensor $T_j$ given by the tensor product of $T_j^1$ and $T_j^2$:
\[
T_j:=T_j^1\otimes T_j^2\in\bbc^{ d_1^1\times \cdots \times d_{m_1}^1 \times d_1^2 \times \cdots \times d_{m_2}^2  }.
\]
In what follows, we find a   Cauchy problem for  $T_j$.  For this, note that 
\begin{equation}  \label{D-7} 
\dot{T}_j=\dot{T}_j^1\otimes T_j^2+T_j^1\otimes \dot{T}_j^2\quad\Longleftrightarrow\quad [\dot{T}_j]_{\beta_{*0}\gamma_{*0}}=[\dot{T}_j^1]_{\beta_{*0}} [T_j^2]_{\gamma_{*0}}+[T_j^1]_{\beta_{*0}}[ \dot{T}_j^2]_{\gamma_{*0}}.
\end{equation}
We substitute \eqref{D-6} for $T_j^1$ and $T_j^2$ into the right-hand side of \eqref{D-7} to find 
\begin{align}
\begin{aligned}\label{D-8}
&[\dot{T}_j]_{\beta_{*0}\gamma_{*0}}
=[\dot{T}_j^1]_{\beta_{*0}} [T_j^2]_{\gamma_{*0}}+[T_j^1]_{\beta_{*0}}[ \dot{T}_j^2]_{\gamma_{*0}}\\
&\hspace{0.5cm}=[A_j^1]_{\beta_{*0}\beta_{*1}}[T_j^1]_{\beta_{*1}}[T_j^2]_{\gamma_{*0}} +\sum_{j_*\in\{0, 1\}^{m_1}}\frac{\kappa_{j_*}^1}{N} \\
&\hspace{0.7cm}  \times\sum_{k=1}^N \Big(\langle T_j^2, T_k^2\rangle_\tF [T_k^1]_{\beta_{*j_*}} \overline{[{T}_j^1]}_{\beta_{*1}}[T_j^1]_{\beta_{*(1-j_*)}} -\langle T_k^2, T_j^2\rangle_\tF [T_j^1]_{\beta_{*j_*}}\overline{[{T}_k^1]}_{\beta_{*1}}[T_j^1]_{\beta_{*(1-j_*)}}\Big)[T_j^2]_{\gamma_{*0}}\\
&\hspace{0.7cm}+[T_j^1]_{\beta_{*0}}[A_j^2]_{\gamma_{*0}\gamma_{*1}}[T_j^1]_{\gamma_{*1}} +\sum_{k_*\in\{0, 1\}^{m_2}}\frac{\kappa_{k_*}^2}{N}    \\
&\hspace{0.7cm} \times \sum_{k=1}^N[T_j^1]_{\beta_{*0}} \Big(\langle T_j^1, T_k^1\rangle_\tF [T_k^2]_{\gamma_{*k_*}} \overline{[{T}_j^2]}_{\gamma_{*1}}[T_j^2]_{\gamma_{*(1-k_*)}} -\langle T_k^1, T_j^1\rangle_\tF [T_j^2]_{\gamma_{*k_*}} \overline{[{T}_k^2]}_{\gamma_{*1}}[T_j^2]_{\gamma_{*(1-k_*)}}\Big)\\
&\hspace{0.5cm}=:\mathcal{I}_{11}+\mathcal{I}_{12}+\mathcal{I}_{13}+\mathcal{I}_{14}.
\end{aligned}
\end{align}
Below, our goal is to  simplify the terms $\mathcal I_{11}+\mathcal I_{13}$ for free flows, and $\mathcal I_2+\mathcal I_4$ for interaction terms, respectively. \newline

\subsubsection{Simplification of free flows} \label{sec:4.2.1} 
For natural frequency tensors $A_j^1$ and $A_j^2$ with rank-$2m_1$ and rank-$2m_2$, respectively, we define  a rank-$2(m_1+m_2)$ tensor $A_j$ as  
\begin{align*} \label{D-9}
[A_j]_{\beta_{*0}\gamma_{*0}\beta_{*1}\gamma_{*1}}:=[A_j^1]_{\beta_{*0}\beta_{*1}}\delta_{\gamma_{*0}\gamma_{*1}}+\delta_{\beta_{*0}\beta_{*1}}[A_j^2]_{\gamma_{*0}\gamma_{*1}},\quad j\in [N].
\end{align*}
Note that one can easily verify block  skew-hermitian property of $A_j$:
\[
[A_j]_{\beta_{*0}\gamma_{*0}\beta_{*1}\gamma_{*1}}=-[\overline{A}_j]_{\beta_{*1}\gamma_{*1}\beta_{*0}\gamma_{*0}}.
\]
Then, we can represent $\mathcal{I}_{11}+\mathcal{I}_{13}$ in terms of $A_j$ and $T_j$:
\begin{align} \label{D-10}
\begin{aligned}
\mathcal{I}_{11}+\mathcal{I}_{13}&=[A_j^1]_{\beta_{*0}\beta_{*1}}[T_j^1]_{\beta_{*1}}[T_j^2]_{\gamma_{*0}}+[T_j^1]_{\beta_{*0}}[A_j^2]_{\gamma_{*0}\gamma_{*1}}[T_j^1]_{\gamma_{*1}}\\
&=\left([A_j^1]_{\beta_{*0}\beta_{*1}}\delta_{\gamma_{*0}\gamma_{*1}}+\delta_{\beta_{*0}\beta_{*1}}[A_j^2]_{\gamma_{*0}\gamma_{*1}}\right)[T_j^1]_{\beta_{*1}}[T_j^2]_{\gamma_{*1}}\\
&=[A_j]_{\beta_{*0}\gamma_{*0}\beta_{*1}\gamma_{*1}}[T_j^1\otimes T_j^2]_{\beta_{*1}\gamma_{*1}}\\
&=[A_j]_{\beta_{*0}\gamma_{*0}\beta_{*1}\gamma_{*1}}[T_j]_{\beta_{*1}\gamma_{*1}}.
\end{aligned}
\end{align}
In other words, we define a new natural frequency tensor $A_j$ from $A_j^1$ and $A_j^2$ so that \eqref{D-8} becomes a free flow with $A_j$ when all coupling strengths are absent. This is the way we will introduce a binary operation denoted by $\star_\tF $ between $A_j^1$ and $A_j^2$ (see Definition \ref{D4.1} below):
\[
A_j := A_j^1 \star_\tF  A_j^2. 
\]


\subsubsection{Simplification of interaction terms} \label{sec:4.2.2} In this part, we simplify the interaction terms $\mathcal I_{12}$  and $\mathcal I_{14}$ appearing in \eqref{D-8} as follows. First, we set 
\begin{align*}
\begin{aligned}
& \mathfrak{T}_1:=  \left(\langle T_j^2, T_k^2\rangle_\tF [T_k^1]_{\beta_{*j_*}} \overline{[{T}_j^1]}_{\beta_{*1}}[T_j^1]_{\beta_{*(1-j_*)}}-
\langle T_k^2, T_j^2\rangle_\tF [T_j^1]_{\beta_{*j_*}}\overline{[{T}_k^1]}_{\beta_{*1}}[T_j^1]_{\beta_{*(1-j_*)}}\right)[T_j^2]_{\gamma_{*0}},   \\
&   \mathfrak{T}_2:=  [T_j^1]_{\beta_{*0}}\left(\langle T_j^1, T_k^1\rangle_\tF [T_k^2]_{\gamma_{*k_*}}\overline{[{T}_j^2]}_{\gamma_{*1}}[T_j^2]_{\gamma_{*(1-k_*)}}-
\langle T_k^1, T_j^1\rangle_\tF [T_j^2]_{\gamma_{*k_*}}\overline{[{T}_k^2]}_{\gamma_{*1}}[T_j^2]_{\gamma_{*(1-k_*)}}\right ).  
\end{aligned}
\end{align*}
Then, we have 
\[
\mathcal I_{12} = \sum_{j\in \{0,1\}^{m_1}} \frac{\kp_{j_*}^1}{N} \sum_{k=1}^N \mathfrak{T}_1, \quad \mathcal I_{12} = \sum_{j\in \{0,1\}^{m_2}} \frac{\kp_{j_*}^2}{N} \sum_{k=1}^N \mathfrak{T}_2.
\]
By straightforward calculation, we can express $\mathfrak{T}_1$ in terms of $T_j$:
\begin{align}
\begin{aligned}\label{D-12}
\mathfrak{T}_1&=[T_k^1]_{\beta_{*j_*}}\overline{[{T}_j^1]}_{\beta_{*1}}[T_j^1]_{\beta_{*(1-j_*)}}[T_k^2]_{\gamma_{*1}}\overline{[{T}_j^2]}_{\gamma_{*1}}[T_j^2]_{\gamma_{*0}}\\
&\hspace{3cm}-
[T_j^1]_{\beta_{*j_*}}\overline{[{T}_k^1]}_{\beta_{*1}}[T_j^1]_{\beta_{*(1-j_*)}}[T_j^2]_{\gamma_{*1}}\overline{[{T}_k^2]}_{\gamma_{*1}}[T_j^2]_{\gamma_{*0}}\\
&=[T_k^1\otimes T_k^2]_{\beta_{*j_*}\gamma_{*1}}[\overline{T_j^1\otimes T_j^2}]_{\beta_{*1}\gamma_{*1}}[T_j^1\otimes T_j^2]_{\beta_{*(1-j_*)}\gamma_{*0}}\\
&\hspace{3cm}-
[T_j^1\otimes T_j^2]_{\beta_{*j_*}\gamma_{*1}}[\overline{T_k^1\otimes T_k^2}]_{\beta_{*1}\gamma_{*1}}[T_j^1\otimes T_j^2]_{\beta_{*(1-j_*)}\gamma_{*0}}\\
&=[T_k]_{\beta_{*j_*}\gamma_{*1}}\overline{[{T}_j]}_{\beta_{*1}\gamma_{*1}}[T_j]_{\beta_{*(1-j_*)}\gamma_{*0}}-
[T_j]_{\beta_{*j_*}\gamma_{*1}}\overline{[{T}_k]}_{\beta_{*1}\gamma_{*1}}[T_j]_{\beta_{*(1-j_*)}\gamma_{*0}},
\end{aligned}
\end{align}
where we used the Einstein convention for the repeated indices. Similarly, one has 
\begin{align}
\begin{aligned}\label{D-13}
 \mathfrak{T}_2 =[T_k]_{\beta_{*1}\gamma_{*k_*}}\overline{[{T}_j]}_{\beta_{*1}\gamma_{*1}}[T_j]_{\beta_{*0}\gamma_{\gamma_{*(1-k_*)}}}-[T_j]_{\beta_{*1}\gamma_{*k_*}}\overline{[{T}_k]}_{\beta_{*1}\gamma_{*1}}[T_j]_{\beta_{*0}\gamma_{*(1-k_*)}}.
\end{aligned}
\end{align}
To avoid cluttered mathematical expression, we introduce simplified notation: for $n \in \bbn$, 
\begin{equation}\label{D-14}
\mathbf{1}_n:=\underbrace{(1, 1, \cdots, 1)}_{n-\text{times}}, \quad \alpha_{*(j_*, k_*)}:=(\beta_{*j_*}, \gamma_{*k_*}), \quad j_*\in\{0, 1\}^{m_1} ~~\textup{and}~~  k_*\in\{0, 1\}^{m_2}. 
\end{equation}
In \eqref{D-8}, we use \eqref{D-12}, \eqref{D-13} and \eqref{D-14} to represent $\mathcal I_{12}$ and $\mathcal I_{14}$ only in terms of $T_j$:
\begin{align*}
\mathcal{I}_{12}&=\sum_{j_*\in\{0, 1\}^{m_1}}\frac{\kappa_{j_*}^1}{N}\sum_{k=1}^N \Big(\langle T_j^2, T_k^2\rangle_\tF [T_k^1]_{\beta_{*j_*}}\overline{[{T}_j^1]}_{\beta_{*1}}[T_j^1]_{\beta_{*(1-j_*)}}\\
&\hspace{5cm}-\langle T_k^2, T_j^2\rangle_\tF [T_j^1]_{\beta_{*j_*}}\overline{[{T}_k^1]}_{\beta_{*1}}[T_j^1]_{\beta_{*(1-j_*)}}\Big)[T_j^2]_{\gamma_{*0}}\\
&=\sum_{j_*\in\{0, 1\}^{m_1}}\frac{\kappa_{j_*}^1}{N}\sum_{k=1}^N \Big([T_k]_{\beta_{*j_*}\gamma_{*1}}\overline{[{T}_j]}_{\beta_{*1}\gamma_{*1}}[T_j]_{\beta_{*(1-j_*)}\gamma_{*0}}\\
&\hspace{5cm}-[T_j]_{\beta_{*j_*}\gamma_{*1}}\overline{[{T}_k]}_{\beta_{*1}\gamma_{*1}}[T_j]_{\beta_{*(1-j_*)}\gamma_{*0}}\Big)\\
&=\sum_{\substack{i_*=(j_*,\mathbf{1}_{m_2})\\j_*\in\{0, 1\}^{m_1}}}
\frac{\kappa_{j_*}^1}{N}\sum_{k=1}^N\left([T_k]_{\alpha_{*i_*}}\overline{[{T}_j]}_{\alpha_{*1}}[T_j]_{\alpha_{*(1-i_*)}}-
[T_j]_{\alpha_{*i_*}}\overline{[{T}_k]}_{\alpha_{*1}}[T_j]_{\alpha_{*(1-i_*)}}\right),
\end{align*}
and
\begin{align*}
\mathcal{I}_{14}&=\sum_{k_*\in\{0, 1\}^{m_2}}\frac{\kappa_{k_*}^2}{N}\sum_{k=1}^N[T_j^1]_{\beta_{*0}}\Big(\langle T_j^1, T_k^1\rangle_\tF [T_k^2]_{\gamma_{*k_*}}\overline{[{T}_j^2]}_{\gamma_{*1}}[T_j^2]_{\gamma_{*(1-k_*)}}\\
&\hspace{5cm}-\langle T_k^1, T_j^1\rangle_\tF [T_j^2]_{\gamma_{*k_*}}\overline{[{T}_k^2]}_{\gamma_{*1}}[T_j^2]_{\gamma_{*(1-k_*)}}\Big)\\
&=\sum_{k_*\in\{0, 1\}^{m_2}}\frac{\kappa_{k_*}^2}{N}\sum_{k=1}^N\Big([T_k]_{\beta_{*1}\gamma_{*k_*}}\overline{[T_j]}_{\beta_{*1}\gamma_{*1}}[T_j]_{\beta_{*0}\gamma_{\gamma_{*(1-k_*)}}}\\
&\hspace{5cm}-[T_j]_{\beta_{*1}\gamma_{*k_*}}\overline{[{T}_k]}_{\beta_{*1}\gamma_{*1}}[T_j]_{\beta_{*0}\gamma_{*(1-k_*)}}\Big)\\
&=\sum_{\substack{i_*=(\mathbf{1}_{m_1}, k_*)\\k_*\in\{0, 1\}^{m_2}}}\frac{\kappa_{k_*}^2}{N}\sum_{k=1}^N \left([T_k]_{\alpha_{*i_*}}\overline{[{T}_j]}_{\alpha_{*1}}[T_j]_{\alpha_{*(1-i_*)}}-[T_j]_{\alpha_{*i_*}}\overline{[{T}_k]}_{\alpha_{*1}}[T_j]_{\alpha_{*(1-i_*)}}\right).
\end{align*}
Note that $\mathcal I_{12}$ and $\mathcal I_{14}$ are represented in terms of $\{T_j\}$. Now, we consider $\mathcal I_{12} + \mathcal I_{14}$: 
\begin{align}\label{D-16-1}
\begin{aligned} 
&\mathcal{I}_{12}+\mathcal{I}_{14} \\
&\hspace{0.5cm} =\sum_{\substack{i_*=(j_*,\mathbf{1}_{m_2})\\j_*\in\{0, 1\}^{m_1}}}
\frac{\kappa_{j_*}^1}{N}\sum_{k=1}^N\left([T_k]_{\alpha_{*i_*}}\overline{[{T}_j]}_{\alpha_{*1}}[T_j]_{\alpha_{*(1-i_*)}}-
[T_j]_{\alpha_{*i_*}}[\overline{T}_k]_{\alpha_{*1}}[T_j]_{\alpha_{*(1-i_*)}}\right)\\
&\hspace{0.7cm} +\sum_{\substack{i_*=(\mathbf{1}_{m_1}, k_*)\\k_*\in\{0, 1\}^{m_2}}}\frac{\kappa_{k_*}^2}{N}\sum_{k=1}^N \left([T_k]_{\alpha_{*i_*}}\overline{[{T}_j]}_{\alpha_{*1}}[T_j]_{\alpha_{*(1-i_*)}}-[T_j]_{\alpha_{*i_*}}\overline{[{T}_k]}_{\alpha_{*1}}[T_j]_{\alpha_{*(1-i_*)}}\right).
\end{aligned}
\end{align}
Since we want the desired model for $T_j$ to be the LT model, the interaction term \eqref{D-16-1} should be further reduced to a simpler form. For this, we will introduce a binary operation, denoted by $\star_c$ (see Definition \ref{D4.1} below) between two coupling strength $\kp_{j_*}^1$ and $\kp_{k_*}^2$ in \eqref{D-16-1} so that $\mathcal I_{12} + \mathcal I_{14}$ becomes
\begin{align} \label{D-17}
\begin{aligned}
&\mathcal I_{12} + \mathcal I_{14} \\
&=\sum_{i_*\in\{0, 1\}^{m_1+m_2}}\frac{(\kappa^1\star\kappa^2)_{i_*}}{N}\sum_{k=1}^N\left([T_k]_{\alpha_{*i_*}}\overline{[{T}_j]}_{\alpha_{*1}}[T_j]_{\alpha_{*(1-i_*)}}-[T_j]_{\alpha_{*i_*}}\overline{[{T}_k]}_{\alpha_{*1}}[T_j]_{\alpha_{*(1-i_*)}}\right).
\end{aligned}
\end{align}
Finally in \eqref{D-8}, we combine \eqref{D-10} for free flow terms and \eqref{D-17} for interaction terms to find the Cauchy problem for $T_j=T_j^1\otimes T_j^2$: 
\begin{equation}\label{D-18}
\begin{cases}
\displaystyle[\dot{T}_j]_{\alpha_{*0}}=[A_j^1\star_\tF  A_j^2]_{\alpha_{*0}\alpha_{*1}}[T_j]_{\alpha_{*1}} +\sum_{i_*\in\{0, 1\}^{m_1+m_2}}\sum_{k=1}^N\frac{(\mathfrak{K}^1\star_c \mathfrak{K}^2)_{i_*}}{N} \\\displaystyle \hspace{1cm} \times \Big([T_k]_{\alpha_{*i_*}}[\overline{T}_j]_{\alpha_{*1}}[T_j]_{\alpha_{*(1-i_*)}} -[T_j]_{\alpha_{*i_*}}[\overline{T}_k]_{\alpha_{*1}}[T_j]_{\alpha_{*(1-i_*)}}\Big),\\
\displaystyle T_j(0)=T_j^0=(T_j^1)^0\otimes (T_j^2)^0.
\end{cases}
\end{equation}

\vspace{0.2cm}

So far, we have derived the Cauchy problem for $T_j=T_j^1\otimes T_j^2$ from the weakly coupled model \eqref{D-6} for $T_j^1$ and $T_j^2$. Motivated by the previous arguments,   we provide the definition of fusion operations $\star_s$, $\star_\tF $, $\star_c$ and $\star_i$.

\begin{definition} \label{D4.1} 
\textup{(i) (Fusion of size vectors)}: Let $\mathbf{d}^1=(d^1_{1},\cdots,d^1_{m_1}) \in \bbn^{m_1}$ and $\mathbf{d}^2=(d^2_{1},\cdots,d^2_{m_2})\in \bbn^{m_2}$ be two size vectors with
\[ m_1: =\mathrm{dim}(\mathbf{d}^1) \quad \textup{and} \quad m_2:=\mathrm{dim}(\mathbf{d}^2). \]
Then, we define the fusion operation $\star_s$ of two size vectors:
\begin{equation*}
\begin{cases}
\vspace{0.1cm} \displaystyle \star_s:\bbn^{m_1} \times \bbn^{m_2} \to \bbn^{m_1+ m_2},\\
 \displaystyle (\mathbf{d}^1,\mathbf{d}^2)\mapsto\mathbf{d}^1\star_s \mathbf{d}^2 := (d^1_{1},\cdots,d^1_{m_1},d^2_{1},\cdots,d^2_{m_2}) \in \bbn^{m_1+m_2}.
\end{cases}
\end{equation*}
\vspace{0.2cm}

\noindent \textup{(ii) (Fusion of natural frequency tensors)}: Let $A^1\in\mathfrak{F}(\mathbf{d}_1)$ and $A^2\in\mathfrak{F}(\mathbf{d}_2)$ be two natural frequency tensors in $\mathfrak{F}$.  Then, we define the fusion operation $\star_\tF $ between two natural frequency tensors as follows:
\begin{equation} \label{D-18-0}
\begin{cases}
\vspace{0.2cm}\displaystyle \star_\tF : \mathfrak{F} \times\mathfrak{F} \to\mathfrak{F},\quad (A^1, A^2)\mapsto A^1\star_\tF  A^2, \\
\displaystyle  [A^1\star_\tF  A^2]_{\beta_{*0}\gamma_{*0}\beta_{*1}\gamma_{*1}} :=[A^1]_{\beta_{*0}\beta_{*1}}\delta_{\gamma_{*0}\gamma_{*1}}+\delta_{\beta_{*0}\beta_{*1}}[A^2]_{\gamma_{*0}\gamma_{*1}},
\end{cases}
\end{equation}
for $\beta_{*0},~\beta_{*1}\in\mathfrak{I}(\mathbf{d}_1),~\gamma_{*0},~\gamma_{*1}\in\mathfrak{I}(\mathbf{d}_2).$

\vspace{0.2cm}

\noindent \textup{(iii) }(Fusion of coupling strength tensors): 
 Let $\mathfrak{K}^1=\{\kappa_{j_*}^1\}_{j_*\in\{0, 1\}^{m_1}}$ and $\mathfrak{K}^2=\{\kappa_{k_*}^2\}_{k_*\in\{0, 1\}^{m_2}}$ be coupling strength tensors. Then, we define the fusion operation $\star_c$ between two coupling strength tensors as follows: 
\[
\star_c: \bbr^{2^{m_1}}\times \bbr^{2^{m_2}}\to \bbr^{2^{m_1+ m_2}},\quad (\mathfrak{K}^1,\mathfrak{K}^2)\mapsto \mathfrak{K}^1\star_c \mathfrak{K}^2,
\]
where
\begin{align} \label{D-18-1}
\begin{aligned}
(\mathfrak{K}^1\star_c \mathfrak{K}^2)_{(j_*, k_*)} :=
\begin{cases}
\kappa_{j_*}^1\quad& \text{if }j_*\neq \mathbf{1}_{m_1}\text{~and~}k_*= \mathbf{1}_{m_2},\\
\kappa_{k_*}^2\quad& \text{if }j_*= \mathbf{1}_{m_1}\text{~and~}k_*\neq \mathbf{1}_{m_2},\\
\kappa_{\mathbf{1}_{m_1}}^1+\kappa_{\mathbf{1}_{m_2}}^2\quad&\text{if }j_*= \mathbf{1}_{m_1}\text{~and~}k_*= \mathbf{1}_{m_2},\\
0\quad&\text{if }j_*\neq \mathbf{1}_{m_1}\text{~and~}k_*\neq \mathbf{1}_{m_2},\\
\end{cases}
\end{aligned}
\end{align}
for all $j_*\in\{0, 1\}^{m_1}$ and $k_*\in\{0, 1\}^{m_2}$. Moreover, $\mathfrak{K}^1\star_c \mathfrak{K}^2$ can be simply rewritten in a compact form using  Kronecker-delta symbols:
\begin{align*}\label{D-18-2}
(\mathfrak{K}^1\star_c \mathfrak{K}^2)_{(j_*, k_*)}=\kappa^1_{j_*}\delta_{k_*,\mathbf{1}_{m_2}}+\kappa^2_{j_*}\delta_{j_*,\mathbf{1}_{m_1}},
\end{align*}
where $\delta_{(i_1, i_2, \cdots, i_m), (j_1, j_2, \cdots, j_m)}=\delta_{i_1j_1}\delta_{i_2j_2}\cdots\delta_{i_mj_m}$. 

\vspace{0.2cm}

\noindent \textup{(iv) } (Fusion of initial configurations): Let $(T_j^1)^0$ and $(T_j^2)^0$ be two initial configurations. Then, the fusion operation $\star_i$ between initial configurations is defined as the tensor product:
\begin{equation*}
\star_i : \bbc^{\mathbf{d}^1} \times \bbc^{\mathbf{d}^2} \to \bbc^{\mathbf{d}^1 \star_s \mathbf{d}^2}, \quad ( (T_j^1)^0,(T_j^2)^0) \mapsto (T_j^1)^0  \star_i (T_j^2)^0 = (T_j^1)^0\otimes (T_j^2)^0.
\end{equation*}
\end{definition}
\begin{remark}
Below, we give several comments on the fusion operations in Definition \ref{D4.1} for each component:
\begin{enumerate}
\item (Fusion of size vectors): The fusion operation $\star_s$ is merely a juxtaposition of two given size vectors. 
\item (Fusion of natural frequency tensors): In \eqref{D-18-0}, $A^1 \star_\tF  A^2$ is a block diagonal form constructed from $A^1$ and $A^2$. 
\item (Fusion of coupling strength tensors): When we define $(\mathfrak{K}^1 \star_c \mathfrak{K}^2)_{(j_*,k_*)}$, it was crucial whether $j_*$ or $k_*$ is a vector whose components are all one. 
\item(Fusion of initial configuration): The fusion operation $\star_i$ is merely the tensor product of two given initial data. 
\end{enumerate}
\end{remark}

Next,  we are concerned with the characteristic symbol of the double tensor model \eqref{D-18}. By the definition of the fusion operations in Definition \ref{D4.1}, the characteristic symbol for \eqref{D-18} is given by 
\begin{equation*}\label{D-19}
\mathfrak{C}=(\bfd^1\star_s \bfd^2, \mathfrak{K}^1\star_c \mathfrak{K}^2,\{A_j^1\star_\tF  A_j^2\}, \{(T_j^1)^0\otimes(T_j^2)^0\}).
\end{equation*}
In this way, we can define a binary operation of two characteristic symbols $\mathfrak{C}^1$ and $\mathfrak{C}^2$. Recall from Definition \ref{D3.3} that $\mathfrak{D}$ denotes the set of all characteristic symbols. 
\begin{definition} \label{D4.2} (Fusion of characteristic symbols) 
Let $\mathfrak{C}^1=(\bfd^1, \mathfrak{K}^1,\{A^1_j\},\{(T_j^1)^0\}) \in \mathfrak{D}$ and $\mathfrak{C}^2=(\bfd^2, \mathfrak{K}^2,\{A^2_j\},\{(T_j^2)^0\})\in \mathfrak{D}$ be  characteristic symbols of  the Cauchy problems to the LT models, respectively. We define the fusion operation $\star$ between two characteristic symbols:
\begin{align*}
&\star: \mathfrak{D} \times \mathfrak{D} \to \mathfrak{D}, \\
& (\mathfrak{C}^1, \mathfrak{C}^2)\mapsto \mathfrak{C}^1\star \mathfrak{C}^2=(\bfd^1\star_s \bfd^2, \mathfrak{K}^1\star_c \mathfrak{K}^2,\{A_j^1\star_\tF  A_j^2\}, \{(T_j^1)^0\otimes(T_j^2)^0\}). 
\end{align*}
\end{definition}

As a direct consequence of the fusion operation in Definition \ref{D4.2}, we summarize the arguments above in the following theorem. 
\begin{theorem}\label{T4.1}
Let $\mathfrak{C}^1=(\bfd^1, \mathfrak{K}^1,\{A^1_j\},\{(T_j^1)^0\}) \in \mathfrak{D}$ and $\mathfrak{C}^2=(\bfd^2, \mathfrak{K}^2,\{A^2_j\},\{(T_j^2)^0\})\in \mathfrak{D}$ be characteristic symbols of the Cauchy problems to the LT models. If $\{T_j(t)\}_{j=1}^N$ is a solution to system \eqref{D-18} corresponding to the characteristic symbols $\mathfrak{C}_1\star \mathfrak{C}_2$, then $T_j(t)$ can be decomposed as
\[
T_j(t)=(T_j^1\otimes T_j^2)(t), \quad t > 0, \quad j\in [N]. 
\]
\end{theorem}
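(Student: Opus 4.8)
The plan is to read Theorem \ref{T4.1} as a uniqueness statement: the forward computation of Section \ref{sec:4.2}, which produced the single LT model \eqref{D-18} from the coupled pair \eqref{D-6}, is combined with uniqueness for the Cauchy problem \eqref{D-18} to pin down its solution as a tensor product. First I would record the well-posedness of the two systems at play. The right-hand side of \eqref{D-18} is a polynomial in the entries of $\{T_j\}$ and their conjugates, hence smooth and locally Lipschitz, and the LT structure furnishes the constant of motion $\|T_j(t)\|_\tF = \|T_j^0\|_\tF = 1$ (here $\|(T_j^1)^0\otimes(T_j^2)^0\|_\tF = \|(T_j^1)^0\|_\tF\|(T_j^2)^0\|_\tF = 1$), so every trajectory remains on the compact product of unit Frobenius spheres and the solution exists, is unique and is global. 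The same applies to the double tensor model \eqref{D-6}: its right-hand side is again polynomial, and because the gain and loss terms carry the mutually conjugate weights $\langle T_j^2,T_k^2\rangle_\tF$ and $\langle T_k^2,T_j^2\rangle_\tF$, the same ``gain$-$loss'' cancellation that conserves $\|T_j\|_\tF$ for \eqref{LT} conserves each factor norm $\|T_j^1\|_\tF$ and $\|T_j^2\|_\tF$ separately; thus \eqref{D-6} admits a unique global solution $\{(T_j^1,T_j^2)\}$ issuing from $((T_j^1)^0,(T_j^2)^0)$.

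Next I would set $\tilde T_j := T_j^1\otimes T_j^2$ and verify that $\{\tilde T_j\}$ solves \eqref{D-18}. This is precisely the computation \eqref{D-7}--\eqref{D-18}: differentiating the tensor product by the Leibniz rule \eqref{D-7}, substituting \eqref{D-6}, and regrouping the four pieces into free flows $\mathcal I_{11}+\mathcal I_{13}$ and interactions $\mathcal I_{12}+\mathcal I_{14}$. By \eqref{D-10} the free-flow part assembles into $[A_j^1\star_\tF A_j^2]_{\alpha_{*0}\alpha_{*1}}[\tilde T_j]_{\alpha_{*1}}$, while \eqref{D-12}--\eqref{D-17} rewrite the interaction part as a single sum over $i_*\in\{0,1\}^{m_1+m_2}$ with the fused coupling strengths $(\mathfrak{K}^1\star_c\mathfrak{K}^2)_{i_*}$ of Definition \ref{D4.1}. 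Hence $\{\tilde T_j\}$ satisfies exactly the system \eqref{D-18} attached to $\mathfrak{C}_1\star\mathfrak{C}_2$ of Definition \ref{D4.2}, with initial datum $\tilde T_j(0) = (T_j^1)^0\otimes(T_j^2)^0 = T_j^0$.

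Finally, the two global solutions $\{T_j\}$ and $\{\tilde T_j\}$ of \eqref{D-18} share the same initial datum, so the uniqueness established in the first step forces $T_j(t) = \tilde T_j(t) = (T_j^1\otimes T_j^2)(t)$ for all $t>0$ and $j\in[N]$, which is the assertion. The only genuinely delicate point is the bookkeeping in the second step, namely confirming that the relabelings $i_* = (j_*,\mathbf{1}_{m_2})$ and $i_* = (\mathbf{1}_{m_1},k_*)$, together with the piecewise rule \eqref{D-18-1} defining $\star_c$ (which returns $0$ unless at least one of the two blocks is all-ones), reproduce exactly the full sum over $i_*\in\{0,1\}^{m_1+m_2}$; since this is already carried out in \eqref{D-12}--\eqref{D-17}, the proof need only invoke those identities rather than repeat them.
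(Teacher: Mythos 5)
Your proposal is correct and follows essentially the same route as the paper: the heart of the argument is the forward computation of Section \ref{sec:4.2} (Leibniz rule \eqref{D-7}, the split into $\mathcal I_{11}+\mathcal I_{13}$ and $\mathcal I_{12}+\mathcal I_{14}$, and the identities \eqref{D-10} and \eqref{D-12}--\eqref{D-17}), which shows that $T_j^1\otimes T_j^2$ solves \eqref{D-18}. The paper states the theorem as a direct summary of that computation and leaves implicit the well-posedness/uniqueness step you spell out (local Lipschitz right-hand side plus conservation of the Frobenius norms, hence global unique solutions to both \eqref{D-6} and \eqref{D-18}); making that step explicit is a genuine improvement in rigor, since the theorem as stated is the converse ``every solution decomposes'' direction and does require uniqueness to close.
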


\subsection{Double tensor model without free flows}
In this subsection, we consider LT models from characteristic symbol $\mathfrak{C}=(\mathbf{d}, \mathfrak{K}, \{A_j\},\{T_j^0\})$ with $A_j\equiv 0$ for all $j\in[N]$. This kind of LT models are studied in \cite{H-P4}. From Remark 4.2 of \cite{H-P4}, if $\{T_i\}$ is a solution to $\mathfrak{C}=(\mathbf{d}, \mathfrak{K}, \{0\}, \{T_j^0\})$ 
we know that
\[
\lim_{t\to\infty}( [T_c]_{\alpha_{*i_*}}[\overline{T}_i]_{\alpha_{*1}}-[T_i]_{\alpha_{*i_*}}[\overline{T}_c]_{\alpha_{*1}})=0,
\]
for all uncontracted indices,  when $\kappa_{i_*}>0$. Now, we assume that $\mathfrak{C}=\mathfrak{C}^1\star \mathfrak{C}^2$, where $\mathfrak{C}^1=(\mathbf{d}^1, \mathfrak{K}^1,\{0\}, \{(T_j^1)^0\})$ and $\mathfrak{C}^2=(\mathbf{d}^2, \mathfrak{K}^2,\{0\}, \{(T_j^2)^0\})$. We also set $m_1$ and $m_2$ be the dimensions of $\mathbf{d}^1$ and $\mathbf{d}^2$. Then, we can assume that the solution $\{T_j\}$ can be decomposed as
\[
T_j=T_j^1\otimes T_j^2, \quad\forall~j\in[N].
\]
Also, we know that $\kappa_{i_*}>0$ if and only if either
\[
\kappa^1_{j_*}>0 \quad\text{and}\quad k_*=\mathbf{1}_{m_2}
\]
or
\[
\kappa^2_{k_*}>0\quad\text{and}\quad j_*=\mathbf{1}_{m_1},
\]
where $i_*=(j_*, k_*)\in\{0, 1\}^{m_1+m_2}$ with $j_*\in\{0, 1\}^{m_1}$ and $k_*\in\{0, 1\}^{m_2}$. If we let
\[
i_*=(j_*, k_*),\quad \alpha_*=(\beta_*, \gamma_*),
\]
then one finds
\begin{align*}
 &[T_c]_{\alpha_{*i_*}}[\overline{T}_i]_{\alpha_{*1}}-[T_i]_{\alpha_{*i_*}}[\overline{T}_c]_{\alpha_{*1}}\\
 &\hspace{1cm} =\frac{1}{N}\sum_{k=1}^N\left([T_k]_{\alpha_{*i_*}}[\overline{T}_i]_{\alpha_{*1}}-[T_i]_{\alpha_{*i_*}}[\overline{T}_k]_{\alpha_{*1}}\right)\\
 &\hspace{1cm}  =\frac{1}{N}\sum_{k=1}^N\left([T_k^1]_{\beta_{*j_*}}[T_k^2]_{\gamma_{*k_*}}[\bar{T}_i^1]_{\beta_{*1}}[\bar{T}_i^2]_{\gamma_{*1}}-[T_i^1]_{\beta_{*j_*}}[T_i^2]_{\gamma_{*k_*}}[\bar{T}_k^1]_{\beta_{*1}}[\bar{T}_k^2]_{\gamma_{*1}}\right).
\end{align*}
If we substitute $k_*=\mathbf{1}_{m_2}$, then we get
\[
[T_c]_{\alpha_{*i_*}}[\bar{T}_i]_{\alpha_{*1}}-[T_i]_{\alpha_{*i_*}}[\bar{T}_c]_{\alpha_{*1}}=\frac{1}{N}\sum_{k=1}^N\left(\langle T_i^2, T_k^2\rangle_\tF  [T_k^1]_{\beta_{*j_*}}[\bar{T}_i^1]_{\beta_{*1}}-\langle T_k^2, T_i^2\rangle_\tF [T_i^1]_{\beta_{*j_*}}[\bar{T}_k^1]_{\beta_{*1}}\right).
\]
Similarly, if we substitue $j_*=\mathbf{1}_{m_1}$, then we get
\[
[T_c]_{\alpha_{*i_*}}[\bar{T}_i]_{\alpha_{*1}}-[T_i]_{\alpha_{*i_*}}[\bar{T}_c]_{\alpha_{*1}}=
\frac{1}{N}\sum_{k=1}^N\left(\langle T_i^1, T_k^1\rangle_\tF  [T_k^2]_{\gamma_{*k_*}}[\bar{T}_i^2]_{\gamma_{*1}}-\langle T_k^1, T_i^1\rangle_\tF [T_i^2]_{\gamma_{*k_*}}[\bar{T}_k^2]_{\gamma_{*1}}\right).
\]
Finally, we have the following results:
\begin{align*}
\begin{cases}
\displaystyle\lim_{t\to\infty}\frac{1}{N}\sum_{k=1}^N\left(\langle T_i^2, T_k^2\rangle_\tF  [T_k^1]_{\beta_{*j_*}}[\bar{T}_i^1]_{\beta_{*1}}-\langle T_k^2, T_i^2\rangle_\tF [T_i^1]_{\beta_{*j_*}}[\bar{T}_k^1]_{\beta_{*1}}\right)=0\quad\text{if }\kappa_{j_*}^1>0,\\
\displaystyle\lim_{t\to\infty}\frac{1}{N}\sum_{k=1}^N\left(\langle T_i^1, T_k^1\rangle_\tF [T_k^2]_{\gamma_{*k_*}}[\bar{T}_i^2]_{\gamma_{*1}}-\langle T_k^1, T_i^1\rangle_\tF[T_i^2]_{\gamma_{*k_*}}[\bar{T}_k^2]_{\gamma_{*1}}\right)=0
\quad\text{if }\kappa_{k_*}^2>0.
\end{cases}
\end{align*}
We can compare these two terms and coupling terms in \eqref{D-6}. Coupling terms of $\dot{T}_j^1$ and $\dot{T}_j^2$ can be expressed as follows:
\[
\left(\frac{\kappa^1_{j_*}}{N}\sum_{k=1}^N\left(\langle T_i^2, T_k^2\rangle_\tF [T_k^1]_{\beta_{*j_*}}[\bar{T}_i^1]_{\beta_{*1}}-\langle T_k^2, T_i^2\rangle_\tF[T_i^1]_{\beta_{*j_*}}[\bar{T}_k^1]_{\beta_{*1}}\right)\right)[T_j^1]_{\beta_{*(1-j_*)}}
\]
and
\[
\left(\frac{\kappa^2_{k_*}}{N}\sum_{k=1}^N\left(\langle T_i^1, T_k^1\rangle_\tF [T_k^2]_{\gamma_{*k_*}}[\bar{T}_i^2]_{\gamma_{*1}}-\langle T_k^1, T_i^1\rangle_\tF[T_i^2]_{\gamma_{*k_*}}[\bar{T}_k^2]_{\gamma_{*1}}\right)\right)[T_j^2]_{\gamma_{*(1-k_*)}},
\]
respectively. From the previous results, we know that each coupling terms in \eqref{D-6} converge to zero as time goes to infinity. We can easily generalize this results to characteristic $\mathfrak{C}=\bs_{i=1}^n\mathfrak{C}^i$ with each natural frequency tensor satisfies 
\[ A^i_j\equiv 0 \quad \mbox{for all $i\in[n]$ and $j\in[N]$}. \]
Since each coupling term in \eqref{E-10} converges to zero, we have
\[
\lim_{t\to\infty}\sum_{k=1}^N \Bigg(\prod_{\substack{p=1\\p\neq \ell}}^n\langle T_j^p, T_k^p\rangle_\tF[T_k^\ell]_{\beta^i_{*j_*}}[\bar{T}_j^\ell]_{\beta^i_{*1}} -
\prod_{\substack{p=1\\p\neq \ell}}^n\langle T_k^p, T_j^p\rangle_\tF[T_j^\ell]_{\beta^\ell_{*j_*}}[\bar{T}_k^\ell]_{\beta^\ell_{*1}}\Bigg)=0,
\]
for all $\ell$ and uncontracted indices when $\kappa^\ell_{j_*}>0$.

\section{Weak coupling of multiple tensor models} \label{sec:5}
\setcounter{equation}{0}
In this section, we extend the weak coupling of two LT models to the arbitrary number of LT models using the fusion operation summarized in Definition \ref{D4.2}.  First, we study algebraic properties of the fusion operation on $\mathfrak{D}$, and it turns out to be a monoid on $\mathfrak{D}$; in other words, it is associative and $\mathfrak{D}$ has the identity element under the fusion operation. 

\subsection{Monoid structure of the fusion operation} \label{sec:5.1} 
In this subsection, we show that the fusion operation $\star$ introduced in Definition \ref{D4.2} gives a monoid structure on $\mathfrak{D}$, i.e., it has the identity and satisfyies associativity relation.  

\subsubsection{Associativity of $\star$}:  We set 
\[
\mathfrak{C}^i : =(\mathbf{d}^i, \mathfrak{K}^i, \{A_j^i\}, \{(T_j^i)^0\}),\quad m_i:=\mathrm{dim}(\mathbf{d}^i), \quad i=1, 2, 3.
\]
Then, note that 
\begin{align*}
\begin{aligned}
& (\mathfrak{C}^1 \star \mathfrak{C}^2)\star \mathfrak{C}^3 \\
& \hspace{0.5cm} =\big( (\mathbf{d}^1\star_s\mathbf{d}^2)\star_s
 \mathbf{d}^3,(\mathfrak{K}^1\star_c \mathfrak{K}^2)\star_c \mathfrak{K}^3,\{(A_j^1\star _\tF  A_j^2)\star_\tF  A_j^3\}, \{((T_j^1)^0\otimes(T_j^2)^0)\otimes(T_j^3)^0\}\big), \\
& \mathfrak{C}^1 \star (\mathfrak{C}^2\star \mathfrak{C}^3) \\
& \hspace{0.5cm} =\big( \mathbf{d}^1\star_s (\mathbf{d}^2\star_s \mathbf{d}^3), \mathfrak{K}^1\star_c (\mathfrak{K}^2\star_c \mathfrak{K}^3),\{A_j^1\star_\tF  ( A_j^2\star_\tF  A_j^3)\}, \{(T_j^1)^0\otimes ((T_j^2)^0\otimes (T_j^3)^0)\}\big).
\end{aligned}
\end{align*}
Thus for the associativity of $\star$, we need to show 
\begin{align} \label{E-0}
\begin{aligned}
&(\mathbf{d}^1\star_s \mathbf{d}^2)\star_s \mathbf{d}^3=\mathbf{d}^1\star_s (\mathbf{d}^2\star_s \mathbf{d}^3),\quad (\mathfrak{K}^1\star_c \mathfrak{K}^2)\star_c \mathfrak{K}^3=\mathfrak{K}^1\star_c (\mathfrak{K}^2\star_c \mathfrak{K}^3),\\
& (A_j^1\star_\tF  A_j^2)\star_\tF  A_j^3=A_j^1\star_\tF  ( A_j^2\star_\tF  A_j^3), \\
& \{((T_j^1)^0\otimes (T_j^2)^0)\otimes(T_j^3)^0\}=\{(T_j^1)^0\otimes((T_j^2)^0\otimes(T_j^3)^0)\}.
\end{aligned}
\end{align}
Since $\star_s$ is a merely juxtaposition of size vectors and $\otimes$ is just tensor product, one can easily verify the first and last relations:
\begin{align*}
\begin{aligned}
& (\mathbf{d}^1\star_s \mathbf{d}^2)\star_s \mathbf{d}^3=\mathbf{d}^1\star_s (\mathbf{d}^2\star_s \mathbf{d}^3),  \\
&   ((T_j^1)^0\otimes(T_j^2)^0)\otimes(T_j^3)^0   = (T_j^1)^0\otimes ((T_j^2)^0\otimes (T_j^3)^0), \quad j\in [N].
\end{aligned}
\end{align*}
In the following two lemmas, we verify the second and third relations in \eqref{E-0} one by one. 
\begin{lemma}\label{L5.1}
The fusion operation $\star_c$   is associative:
\begin{equation} \label{E-0-0}
(\mathfrak{K}^1\star_c \mathfrak{K}^2)\star_c \mathfrak{K}^3=\mathfrak{K}^1\star_c (\mathfrak{K}^2\star_c \mathfrak{K}^3), \quad \mathfrak{K}^i\in \bbr^{2^{m_i}},\quad i=1,2,3.
\end{equation}
\end{lemma}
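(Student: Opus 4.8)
The plan is to verify \eqref{E-0-0} componentwise by appealing to the compact Kronecker-delta form of $\star_c$ recorded in Definition \ref{D4.1}, namely $(\mathfrak{K}^1\star_c \mathfrak{K}^2)_{(j_*,k_*)}=\kappa^1_{j_*}\delta_{k_*,\mathbf{1}_{m_2}}+\kappa^2_{k_*}\delta_{j_*,\mathbf{1}_{m_1}}$. Both sides of \eqref{E-0-0} are rank-$(m_1+m_2+m_3)$ tensors whose entries I index by a single multi-index $i_*=(j_*,k_*,l_*)$ with $j_*\in\{0,1\}^{m_1}$, $k_*\in\{0,1\}^{m_2}$ and $l_*\in\{0,1\}^{m_3}$, so it suffices to compute the $(j_*,k_*,l_*)$-component of each side and check that they agree for every $i_*$. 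The one structural fact that makes the bookkeeping close is that the all-ones index factorizes, $\mathbf{1}_{m_1+m_2}=(\mathbf{1}_{m_1},\mathbf{1}_{m_2})$, so that $\delta_{(j_*,k_*),\mathbf{1}_{m_1+m_2}}=\delta_{j_*,\mathbf{1}_{m_1}}\delta_{k_*,\mathbf{1}_{m_2}}$ and likewise for the grouping $(k_*,l_*)$; I would record this factorization at the outset.

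For the left-hand side I would first form $\mathfrak{K}^{12}:=\mathfrak{K}^1\star_c\mathfrak{K}^2$ and then apply $\star_c\,\mathfrak{K}^3$, invoking the compact form twice and substituting the factorization of the all-ones index. After expanding, the $(j_*,k_*,l_*)$-component reduces to a sum of three monomials in the Kronecker deltas,
\begin{align*}
\big((\mathfrak{K}^1\star_c\mathfrak{K}^2)\star_c\mathfrak{K}^3\big)_{(j_*,k_*,l_*)}
&=\kappa^1_{j_*}\delta_{k_*,\mathbf{1}_{m_2}}\delta_{l_*,\mathbf{1}_{m_3}}
+\kappa^2_{k_*}\delta_{j_*,\mathbf{1}_{m_1}}\delta_{l_*,\mathbf{1}_{m_3}} \\
&\quad+\kappa^3_{l_*}\delta_{j_*,\mathbf{1}_{m_1}}\delta_{k_*,\mathbf{1}_{m_2}}.
\end{align*}
Carrying out the analogous computation for $\mathfrak{K}^1\star_c(\mathfrak{K}^2\star_c\mathfrak{K}^3)$ yields exactly the same three monomials, since each coupling strength $\kappa^p$ ends up multiplied by the product of the two deltas ranging over the indices of the other two factors. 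As the two expressions coincide identically in $i_*$, this establishes \eqref{E-0-0}.

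The computation carries no genuine obstacle; the only point requiring care is keeping straight, at each fusion step, which all-ones block the Kronecker delta enforces, i.e.\ whether the newly fused coordinate ranges over $\{0,1\}^{m_2}$ or $\{0,1\}^{m_3}$. The conceptual takeaway I would flag is that in the triple fusion a coupling strength $\kappa^p_{\cdot}$ survives precisely when all indices belonging to the other two factors attain their all-ones value — a condition manifestly symmetric under regrouping — and this symmetry is exactly what forces associativity.
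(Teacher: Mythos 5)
Your proposal is correct and follows essentially the same route as the paper: a componentwise verification using the compact Kronecker-delta form of $\star_c$, with both sides reducing to the same three-monomial expression $\kappa^1_{j_*}\delta_{k_*,\mathbf{1}_{m_2}}\delta_{l_*,\mathbf{1}_{m_3}}+\kappa^2_{k_*}\delta_{j_*,\mathbf{1}_{m_1}}\delta_{l_*,\mathbf{1}_{m_3}}+\kappa^3_{l_*}\delta_{j_*,\mathbf{1}_{m_1}}\delta_{k_*,\mathbf{1}_{m_2}}$. In fact your displayed formula is written more carefully than the paper's (whose delta subscripts in the second and third terms contain apparent typos), so no changes are needed.
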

\begin{proof}
It suffices to show that \eqref{E-0-0} holds componentwisely. We recall from Definition \ref{D4.1} that  for $(i_*, j_*, k_*)\in \{0, 1\}^{m_1}\times \{0, 1\}^{m_2}\times \{0, 1\}^{m_3}$, we use \eqref{D-18-1} to obtain 
\begin{align}
\begin{aligned} \label{E-1}
& ((\mathfrak{K}^1\star_c \mathfrak{K}^2)\star_c \mathfrak{K}^3)_{(i_*,j_*,k_*)} \\
& \hspace{0.5cm} =(\mathfrak{K}^1\star_c \mathfrak{K}^2)_{(i_*, j_*)}\delta_{k_*, \mathbf{1}_{m_3}}+ \mathfrak{K}^3_{k_*}\delta_{(i_*, j_*),\mathbf{1}_{m_1+m_2}}\\
& \hspace{0.5cm} =\mathfrak{K}^1_{i_*}\delta_{j_*, \mathbf{1}_{m_2}}\delta_{k_*, \mathbf{1}_{m_3}}+ \mathfrak{K}^2_{j_*}\delta_{i_*, \mathbf{1}_{m_1}}\delta_{j_*, \mathbf{1}_{m_2}}+ \mathfrak{K}^3_{k_*}\delta_{i_*, \mathbf{1}_{m_1}}\delta_{j_*, \mathbf{1}_{m_2}}.
\end{aligned}
\end{align}
Similarly, one finds 
\begin{align}
\begin{aligned} \label{E-2}
& (\mathfrak{K}^1\star_c (\mathfrak{K}^2\star_c \mathfrak{K}^3))_{(i_*, j_*, k_*)} \\
& \hspace{0.5cm} =\mathfrak{K}^1_{i_*}\delta_{j_*, \mathbf{1}_{m_2}}\delta_{k_*, \mathbf{1}_{m_3}}+ \mathfrak{K}^2_{j_*}\delta_{i_*, \mathbf{1}_{m_1}}\delta_{j_*, \mathbf{1}_{m_2}}+ \mathfrak{K}^3_{k_*}\delta_{i_*, \mathbf{1}_{m_1}}\delta_{j_*, \mathbf{1}_{m_2}}.
\end{aligned}
\end{align}
Thus, it follows from \eqref{E-1} and \eqref{E-2} that 
\[
(\mathfrak{K}^1\star_c \mathfrak{K}^2)\star_c \mathfrak{K}^3= \mathfrak{K}^1\star_c(\mathfrak{K}^2\star_c \mathfrak{K}^3).
\]
\end{proof}

\begin{lemma}\label{L5.2}
The fusion operation $\star_\tF $   is associative:
\begin{equation} \label{E-2-1}
(A^1\star_\tF  A^2)\star_\tF  A^3=A^1\star_\tF  ( A^2\star_\tF  A^3),\quad A^i\in\mathfrak{F}(\bfd^i),\quad i=1,2,3.
\end{equation}
\end{lemma}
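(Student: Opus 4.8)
The plan is to follow exactly the template of Lemma \ref{L5.1} and verify \eqref{E-2-1} componentwise. I will write $A^i\in\mathfrak{F}(\bfd^i)$ with $m_i=\mathrm{dim}(\bfd^i)$ and use $\beta_*,\gamma_*,\delta_*$ for the multi-indices attached to $\bfd^1,\bfd^2,\bfd^3$ respectively, so that the composite rank-$2(m_1+m_2+m_3)$ tensor on either side of \eqref{E-2-1} carries the $0$-block index $(\beta_{*0},\gamma_{*0},\delta_{*0})$ and the $1$-block index $(\beta_{*1},\gamma_{*1},\delta_{*1})$. The only ingredient I need beyond the definition \eqref{D-18-0} is that a Kronecker delta over a \emph{product} index factorizes, that is, $\delta_{(\gamma_{*0}\delta_{*0})(\gamma_{*1}\delta_{*1})}=\delta_{\gamma_{*0}\gamma_{*1}}\delta_{\delta_{*0}\delta_{*1}}$, which is immediate from the convention $\delta_{(i_1\cdots i_m),(j_1\cdots j_m)}=\delta_{i_1j_1}\cdots\delta_{i_mj_m}$ recorded in Definition \ref{D4.1}.

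First I would expand the left-hand side by applying \eqref{D-18-0} to the outer $\star_\tF$ (with $A^1\star_\tF A^2$ as first argument and $A^3$ as second), then to the inner $\star_\tF$, and split the composite delta $\delta_{(\beta_{*0}\gamma_{*0})(\beta_{*1}\gamma_{*1})}=\delta_{\beta_{*0}\beta_{*1}}\delta_{\gamma_{*0}\gamma_{*1}}$. By distributivity this yields the symmetric three-term sum
\begin{align*}
[(A^1\star_\tF A^2)\star_\tF A^3]_{\beta_{*0}\gamma_{*0}\delta_{*0}\beta_{*1}\gamma_{*1}\delta_{*1}}
&= [A^1]_{\beta_{*0}\beta_{*1}}\,\delta_{\gamma_{*0}\gamma_{*1}}\,\delta_{\delta_{*0}\delta_{*1}} \\
&\quad + \delta_{\beta_{*0}\beta_{*1}}\,[A^2]_{\gamma_{*0}\gamma_{*1}}\,\delta_{\delta_{*0}\delta_{*1}}
+ \delta_{\beta_{*0}\beta_{*1}}\,\delta_{\gamma_{*0}\gamma_{*1}}\,[A^3]_{\delta_{*0}\delta_{*1}}.
\end{align*}
Then I would expand the right-hand side the same way—applying \eqref{D-18-0} first with $A^1$ as first argument and $A^2\star_\tF A^3$ as second, then to the inner product $A^2\star_\tF A^3$, and splitting $\delta_{(\gamma_{*0}\delta_{*0})(\gamma_{*1}\delta_{*1})}$. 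The identical three-term sum results, and comparing componentwise gives \eqref{E-2-1}.

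I do not expect any genuine obstacle here: conceptually $\star_\tF$ is the Kronecker sum of the two frequency operators on the tensor-product index space, and associativity of the Kronecker sum is standard. The only point requiring care is the index bookkeeping—keeping the three blocks $\beta,\gamma,\delta$ aligned across the two nested applications of \eqref{D-18-0} and splitting each composite delta correctly—which is precisely what the factorization identity handles. For completeness I would also note that the argument extends by a routine induction to the $n$-fold fusion $\bs_{i=1}^n A^i$, whose components are the symmetric sum $\sum_{\ell=1}^n [A^\ell]_{\beta^\ell_{*0}\beta^\ell_{*1}}\prod_{p\neq\ell}\delta_{\beta^p_{*0}\beta^p_{*1}}$, which is what guarantees that $\star$ is well defined on all of $\mathfrak{D}$ as used in Section \ref{sec:5}.
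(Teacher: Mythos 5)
Your proof is correct and uses the same overall strategy as the paper (componentwise verification of the two nested fusions), but the actual expansion you write down is the faithful one, whereas the paper's own displayed computation is garbled. The paper expands $[(A^1\star_\tF A^2)\star_\tF A^3]_{\alpha_{*0}\beta_{*0}\gamma_{*0}\alpha_{*1}\beta_{*1}\gamma_{*1}}$ as the \emph{product} $[A^1\star_\tF A^2]_{\alpha_{*0}\beta_{*0}\alpha_{*1}\beta_{*1}}[A^3]_{\gamma_{*0}\gamma_{*1}}=[A^1]_{\alpha_{*0}\alpha_{*1}}[A^2]_{\beta_{*0}\beta_{*1}}[A^3]_{\gamma_{*0}\gamma_{*1}}$, which is the rule for a Kronecker product; the definition \eqref{D-18-0} is a Kronecker \emph{sum}, and applying it twice — together with the factorization $\delta_{(\beta_{*0}\gamma_{*0})(\beta_{*1}\gamma_{*1})}=\delta_{\beta_{*0}\beta_{*1}}\delta_{\gamma_{*0}\gamma_{*1}}$ that you correctly invoke — yields exactly your symmetric three-term expression $[A^1]_{\beta_{*0}\beta_{*1}}\delta_{\gamma_{*0}\gamma_{*1}}\delta_{\delta_{*0}\delta_{*1}}+\delta_{\beta_{*0}\beta_{*1}}[A^2]_{\gamma_{*0}\gamma_{*1}}\delta_{\delta_{*0}\delta_{*1}}+\delta_{\beta_{*0}\beta_{*1}}\delta_{\gamma_{*0}\gamma_{*1}}[A^3]_{\delta_{*0}\delta_{*1}}$. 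Since both the (incorrect) product formula and the (correct) sum formula are manifestly invariant under regrouping, the paper's conclusion is unaffected, but your version is the one that actually follows from Definition \ref{D4.1}, and it is moreover consistent with the free-flow computation in Section \ref{sec:4.2.1} and with your closing observation on the $n$-fold fusion. There is no gap in your argument.
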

\begin{proof}
Similar to Lemma \ref{L5.1}, it suffices to consider each component of both sides in \eqref{E-2-1}. It follows from \eqref{D-18-0} in Definition \ref{D4.1} that 
\begin{align}
\begin{aligned} \label{E-3}
& [(A^1\star A^2)\star A^3]_{\alpha_{*0}\beta_{*0}\gamma_{*0}\alpha_{*1}\beta_{*1}\gamma_{*1}} \\
& \hspace{0.5cm} =[A^1\star A^2]_{\alpha_{*0}\beta_{*0}\alpha_{*1}\beta_{*1}}[A^3]_{\gamma_{*0}\gamma_{*1}} =[A^1]_{\alpha_{*0}\alpha_{*1}}[A^2]_{\beta_{*0}\beta_{*1}}[A^3]_{\gamma_{*0}\gamma_{*1}}.
\end{aligned}
\end{align}
Similarly, we have 
\begin{equation} \label{E-4}
[A^1\star (A_j^2\star A^3)]_{\alpha_{*0}\beta_{*0}\gamma_{*0}\alpha_{*1}\beta_{*1}\gamma_{*1}}=[A^1]_{\alpha_{*0}\alpha_{*1}}[A^2]_{\beta_{*0}\beta_{*1}}[A^3]_{\gamma_{*0}\gamma_{*1}}.
\end{equation}
Finally, we combine \eqref{E-3} and \eqref{E-4} to derive the desired relation.
\end{proof}

Thanks to Lemma \ref{L5.1} and Lemma \ref{L5.2},  we show that $\star$ is associative on $\mathfrak{D}$.

\begin{proposition}\label{P5.1}
The fusion operation $\star$ is associative:
\[
(\mathfrak{C}^1 \star \mathfrak{C}^2) \star \mathfrak{C}^3=\mathfrak{C}^1\star(\mathfrak{C}^2\star \mathfrak{C}^3) ,\quad \mathfrak{C}^j \in \mathfrak{D}, \quad j=1,2,3.
\]
\end{proposition}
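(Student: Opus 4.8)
The plan is to reduce the associativity of $\star$ on $\mathfrak{D}$ to the componentwise associativity of its four constituent fusion operations $\star_s$, $\star_c$, $\star_\tF$ and the tensor product $\otimes$. Since a characteristic symbol $\mathfrak{C}=(\mathbf{d},\mathfrak{K},\{A_j\},\{T_j^0\})$ is a four-tuple and the fusion operation $\star$ acts coordinatewise by applying $\star_s$ to the size vectors, $\star_c$ to the coupling strength tensors, $\star_\tF$ to the natural frequency tensors, and $\otimes$ to the initial configurations (Definition \ref{D4.2}), the identity $(\mathfrak{C}^1\star\mathfrak{C}^2)\star\mathfrak{C}^3=\mathfrak{C}^1\star(\mathfrak{C}^2\star\mathfrak{C}^3)$ holds if and only if each of the four relations in \eqref{E-0} holds. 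This is exactly the decomposition already displayed in the text preceding Lemma \ref{L5.1}.

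First I would invoke the two elementary relations: associativity of $\star_s$ follows because $\star_s$ is mere concatenation of finite tuples, so both orders of bracketing yield the single juxtaposed vector $(d^1_1,\dots,d^1_{m_1},d^2_1,\dots,d^2_{m_2},d^3_1,\dots,d^3_{m_3})$; likewise associativity of $\otimes$ on the initial configurations is the standard associativity of the tensor product, giving $((T_j^1)^0\otimes(T_j^2)^0)\otimes(T_j^3)^0=(T_j^1)^0\otimes((T_j^2)^0\otimes(T_j^3)^0)$ for each $j\in[N]$. These two are precisely the ``easy'' relations already asserted in the display after \eqref{E-0}, so nothing further is required there.

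Next I would cite the two nontrivial relations, which have just been established: Lemma \ref{L5.1} gives the associativity of $\star_c$, and Lemma \ref{L5.2} gives the associativity of $\star_\tF$. With all four coordinatewise identities in hand, I would conclude by reassembling the four-tuples: substituting the four equalities into the two displayed expressions for $(\mathfrak{C}^1\star\mathfrak{C}^2)\star\mathfrak{C}^3$ and $\mathfrak{C}^1\star(\mathfrak{C}^2\star\mathfrak{C}^3)$ shows they coincide entry by entry, hence as characteristic symbols in $\mathfrak{D}$.

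I do not expect a genuine obstacle here, since all the substantive work has been isolated into Lemmas \ref{L5.1} and \ref{L5.2}; the proposition is essentially a bookkeeping assembly. The only point demanding minor care is ensuring that the coordinatewise action of $\star$ is genuinely \emph{independent} across the four components, i.e.\ that applying $\star_c$ does not depend on how $\star_s$ or $\star_\tF$ bracket, so that ``associativity in each slot'' legitimately lifts to ``associativity of the tuple.'' This is immediate from Definition \ref{D4.2}, where each component of $\mathfrak{C}^1\star\mathfrak{C}^2$ is defined by a separate fusion operation acting only on the corresponding components of $\mathfrak{C}^1$ and $\mathfrak{C}^2$, so the argument is complete once the four slotwise identities are in place.
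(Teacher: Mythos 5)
Your proposal is correct and follows exactly the paper's route: the text preceding Lemma \ref{L5.1} performs the same reduction to the four componentwise relations in \eqref{E-0}, disposes of $\star_s$ and $\otimes$ as immediate, and delegates $\star_c$ and $\star_\tF$ to Lemmas \ref{L5.1} and \ref{L5.2}. Nothing is missing.
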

\begin{remark}
By the result of Proposition \ref{P5.1}, associativity relation allows us  to perform the following repeated operations for any $n\in \bbn$: 
\[
\mathfrak{C}^1\star \mathfrak{C}^2\star\cdots\star\mathfrak{C}^n.
\]
Thus, we  simply denote the repeated fusion operations as 
\[
\bs_{i=1}^n \mathfrak{C}^i:=\mathfrak{C}^1\star\mathfrak{C}^2\star\cdots\star \mathfrak{C}^n,
\]
and it is well-defined.
\end{remark}

\subsubsection{Identity element of $\star$}: Next, we look for the identity element for the fusion operation $\star$  denoted by $\mathfrak{C}_e\in \mathfrak{D}$ satisfying 
\begin{align}\label{E-5}
\mathfrak{C}_e \star \mathfrak{C}=  \mathfrak{C} \star \mathfrak{C}_e= \mathfrak{C} ,\quad\textup{for all}\quad \mathfrak{C} \in \mathfrak{D}.
\end{align}
To find the explicit representation for  an identity $\mathfrak{C}_e$, we set 
\[
\mathfrak{C}_e:=(\mathbf{d}^e, \mathfrak{K}^e, \{A_j^e\}, \{(T_j^e)^0\}),\quad \mathfrak{C} =(\mathbf{d}, \mathfrak{K}, \{A_j\}, \{ T_j^0\}),\quad m_e:= \mathrm{dim}(\mathbf{d}^e), \quad m= \mathrm{dim}(\mathbf{d}).
\]
Below, we present a possible candidate for  $\mathfrak{C}_e$ one by one. 

\vspace{0.5cm}

\noindent $\bullet$~Case A (Determination of $\mathbf{d}^e$):~By the definition of   $\star_s$, one has 
\[
\mathbf{d}^e\star_s \mathbf{d}=\mathbf{d}\star_s \mathbf{d}^e=\mathbf{d}.
\]
Thus, $\mathbf{d}^e$ should not contain any components; otherwise, $\mathbf{d}$ differs from $\mathbf{d}^e \star \mathbf{d}$. This shows that $\mathbf{d}^e$ is a zero-dimensional vector:
\[  \mathbf{d}^e=\emptyset. \]
Since the rank is defined as the number of components of $\mathbf{d}^e$ (see Section \ref{sec:3.1}), we have 
\[
m_e= \mathrm{dim}(\mathbf{d}^e)=0.
\]
In addition, a solution $T_j^e$ for the Cauchy problem corresponding to the characteristic symbol $\mathfrak{C}^e$ is rank-$m_e$ tensor, i.e., $T_j^e$ is a complex number.  

\vspace{0.2cm}

\noindent $\bullet$~Case B (Determination of $\mathfrak{K}^e$):  Since $m_e=0$,  we have  
\[
\mathfrak{K}^e:\{0, 1\}^0\to \bbr,
\]
and hence  $\mathfrak{K^e}$ can be considered as a real number. Our guess coincides with the fact that $\mathfrak{K}^e$ is a rank-$m_e$ tensor with $2^{m_e}$ elements. 

\vspace{0.2cm}

\noindent $\bullet$~Case C (Determination of $A_j^e$):~since $A_j^e$ is a rank-$2m_e$ tensor and $m_e=0$,  we can regard $A_j^e\in \bbc$ as a complex number with following skew-hermitian property:
\[
A_j^e=-\bar{A}_j^e.
\]
Thus, there exists a real number $\nu_j$ such that
\[
A_j^e=\mathrm{i}\nu_j.
\]
By Case A--Case C, one has a tentative ansatz for $\mathfrak{C}_e$:
\begin{equation} \label{E-7}
\mathfrak{C}_e=(\emptyset, \mathfrak{K}^e, \{\mathrm{i}\nu_j\}, \{(T_j^e)^0\}).
\end{equation}
\vspace{0.2cm}

In the following proposition, we find the explicit representation for $\mathfrak{C}_e$. 
\begin{proposition} \label{P5.3}
The identity $\mathfrak{C}_e$ is given as follows:
\[ \mathfrak{C}_e = (\emptyset, 0, \{0 \}, \{1\}). \]
\end{proposition}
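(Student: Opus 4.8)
The plan is to take the tentative ansatz \eqref{E-7} for $\mathfrak{C}_e = (\emptyset, \mathfrak{K}^e, \{\mathrm{i}\nu_j\}, \{(T_j^e)^0\})$ already forced by Case A--Case C (so that $m_e = 0$), and then pin down the three remaining scalar unknowns $\mathfrak{K}^e$, $\nu_j$, and $(T_j^e)^0$ by imposing the identity relation \eqref{E-5} componentwise. Since $\star$ acts separately on each of the four slots through $\star_s$, $\star_c$, $\star_\tF$ and $\otimes$, and since the size-vector slot is already settled by $\emptyset \star_s \mathbf{d} = \mathbf{d} = \mathbf{d} \star_s \emptyset$, I only need to analyze the coupling-strength, natural-frequency and initial-configuration slots. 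Throughout I will use the degenerate conventions attached to rank $0$: the index set $\{0,1\}^0$ is the singleton consisting of the empty tuple, $\mathbf{1}_0$ equals that empty tuple, any Kronecker delta over an empty index list equals $1$, and a rank-$0$ tensor is simply a complex number.

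For the coupling-strength slot I would substitute $\mathfrak{K}^1 = \mathfrak{K}^e$ (rank $0$, hence a single real number) into the defining cases \eqref{D-18-1} of $\star_c$. Writing a general index of $\mathfrak{K}^e \star_c \mathfrak{K}$ as $(j_*, k_*)$ with $j_* \in \{0,1\}^0$ and $k_* \in \{0,1\}^m$, the condition $m_e = 0$ forces $j_* = \mathbf{1}_0$, so that only the second and third cases of \eqref{D-18-1} can occur: for $k_* \neq \mathbf{1}_m$ one gets exactly $\kappa_{k_*}$, while for $k_* = \mathbf{1}_m$ one gets $\mathfrak{K}^e + \kappa_{\mathbf{1}_m}$. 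Requiring the latter to equal $\kappa_{\mathbf{1}_m}$ for every $\mathfrak{K}$ forces $\mathfrak{K}^e = 0$, and an entirely parallel computation with the factors reversed handles $\mathfrak{C} \star \mathfrak{C}_e$. The natural-frequency slot is treated the same way: inserting the scalar $A_j^e = \mathrm{i}\nu_j$ into \eqref{D-18-0}, the block-diagonal rule collapses (the $\beta$-indices being empty) to $[A_j^e \star_\tF A_j]_{\gamma_{*0}\gamma_{*1}} = \mathrm{i}\nu_j \, \delta_{\gamma_{*0}\gamma_{*1}} + [A_j]_{\gamma_{*0}\gamma_{*1}}$, and choosing $\gamma_{*0} = \gamma_{*1}$ forces $\nu_j = 0$, hence $A_j^e = 0$.

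Finally, for the initial-configuration slot the tensor product with a rank-$0$ tensor is ordinary scalar multiplication, so $(T_j^e)^0 \otimes T_j^0 = T_j^0$ reads $(T_j^e)^0 \, T_j^0 = T_j^0$ for all admissible $T_j^0$; applying this to a nonzero $T_j^0$ forces the scalar $(T_j^e)^0 = 1$. Note that the weaker constraint $|(T_j^e)^0| = 1$ coming from the normalization $\|T_j^0\|_\tF = 1$ would not suffice; it is universality over all $\mathfrak{C}$ that pins the phase as well. Assembling the three determinations yields $\mathfrak{C}_e = (\emptyset, 0, \{0\}, \{1\})$, and I would close by checking directly that this candidate satisfies both $\mathfrak{C}_e \star \mathfrak{C} = \mathfrak{C}$ and $\mathfrak{C} \star \mathfrak{C}_e = \mathfrak{C}$, a verification that is genuinely needed since $\star$ is not commutative. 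There is no analytic obstacle here; the only place demanding care is the bookkeeping of the rank-$0$ conventions, ensuring that empty-index Kronecker deltas and the singleton set $\{0,1\}^0$ are handled consistently so that the degenerate instances of \eqref{D-18-1} and \eqref{D-18-0} evaluate correctly.
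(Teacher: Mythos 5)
Your proposal is correct and follows essentially the same route as the paper: it takes the ansatz forced by the rank-zero analysis (Cases A--C), then determines $\mathfrak{K}^e=0$, $A_j^e=0$ and $(T_j^e)^0=1$ by imposing the identity relation componentwise in the defining formulas for $\star_c$, $\star_\tF$ and $\otimes$. The extra care you take with the rank-$0$ conventions and the explicit two-sided check are welcome but do not change the substance of the argument.
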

\begin{proof} 
(i) We further identify $\mathfrak{K}^e, \{\mathrm{i}\nu_j\}$ and $(T_j^e)^0$  in \eqref{E-7} as follows. \newline

\noindent $\bullet$~(Representation of $\mathfrak{K}^e$):  since $m_e=0$, we have 
\[
(\mathfrak{K}^e\star_c \mathfrak{K})_{(\emptyset, i_*)}=(\mathfrak{K} \star_c \mathfrak{K}^e)_{(i_*,\emptyset)}=
\begin{cases}
\mathfrak{K}_{i_*}+ \mathfrak{K}^e\quad&\text{if }i_*= \mathbf{1}_m,\\
\mathfrak{K}_{i_*}\quad&\text{otherwise}.
\end{cases}
\]
Since the following relations should hold:
\[
\mathfrak{K}^e\star_c \mathfrak{K} =\mathfrak{K} \star_c \mathfrak{K}^e= \mathfrak{K}, \quad \textup{for all $\mathfrak{K}$}, 
\]
$\mathfrak{K}^e$ satisfies $\mathfrak{K}_{i_*} + \mathfrak{K}^e = \mathfrak{K}_{i_*}$
Thus, we have 
\[
 \mathfrak{K}^e=0.
\]

\noindent $\bullet$~(Representation of  $A_j^e$): We observe that $A_j^e$ should satisfy  
\begin{align*}
& [A_j^e\star_\tF  A_j]_{\emptyset\alpha_{*0}\emptyset\alpha_{*1}} =[A_j^e]_{\emptyset\emptyset}\delta_{\alpha_{*0}\alpha_{*1}}+\delta_{\emptyset\emptyset}[A_j]_{\alpha_{*0}\alpha_{*1}}=A_j^e \delta_{\alpha_{*0}\alpha_{*1}}+[A_j]_{\alpha_{*0}\alpha_{*1}},\\
&  [A_j\star_\tF  A_j^e]_{\alpha_{*0}\emptyset\alpha_{*1}\emptyset} =[A_j]_{\alpha_{*0}\alpha_{*1}}\delta_{\emptyset\emptyset}+\delta_{\alpha_{*0}\alpha_{*1}}[A_j^e]_{\emptyset\emptyset} = [A_j]_{\alpha_{*0}\alpha_{*1}}+A_j^e \delta_{\alpha_{*0}\alpha_{*1}}.
\end{align*}
Since the following relations below holds for all $A_j$:
\[
A_j^e\star_\tF  A_j=A_j\star_\tF  A_j^e=A_j,
\]
one particularly has  
\[
A_j^e \delta_{\alpha_{*0}\alpha_{*1}}+[A_j]_{\alpha_{*0}\alpha_{*1}} = [A_j]_{\alpha_{*0}\alpha_{*1}}.
\]
This yields 
\[
A_j^e=0.
\]

\noindent $\bullet$~(Representation of  $T_j^e$):~since $(T_j^e)^0$ is a complex number whose Frobenius norm equal to 1, the initial data $(T_j^e)^0$ can be considered as a complex number with unit modulus. We set 
\[
c_j:=(T_j^e)^0 \in \bbc, \quad |c_j|=1.
\]
Since $(T_j^e)^0$ should satisfy
\[
(T_j^e)^0\otimes T_j^0= T_j^0\otimes(T_j^e)^0=  T_j^0, \quad \textup{i.e.,}\quad c_j T_j^0 = T_j^0,
\]
we have
\[
c_j = (T_j^e)^0 = 1.
\]
In summary, we have shown that there exists a unique identity for the fusion operation $\star$ on $\mathfrak{D}$, and the unique (left and right) identity of the fusion operation   is given as follows:
\[
\mathfrak{C}_e=(\emptyset, 0, \{0\},\{1\}).
\]
\end{proof}

\begin{remark} 
Since a characteristic symbol uniquely corresponds to a Cauchy problem for the LT model, we are concerned with the Cauchy problem for $\mathfrak{C}^e=(\emptyset, 0, \{0\},\{1\})$:
\begin{align*}
\begin{cases}
\dot{T}_j^e=0, \quad t > 0,\quad j \in [N], \\
T_j^e(0)=(T_j^e)^0=1,
\end{cases}
\end{align*}
which yields a trivial solution $T_j^e(t) \equiv 1.$
\end{remark}

\subsection{Multiple tensor model} \label{sec:5.2}
In this subsection, we construct the multiple tensor model by repeatedly applying fusion operation in Definition \ref{D4.2}.  For any given natural number $n \in \bbn$, consider $n$  LT models whose characteristic symbols are given  by 
\begin{align}\label{E-8}
\mathfrak{C}^\ell=(\mathbf{d}^\ell, \mathfrak{K}^\ell, \{A_j^\ell\}, \{(T_j^\ell)^0\}), \quad m_\ell = \textup{dim}(\mathbf{d}^\ell),\quad 1\leq\ell\leq n.
\end{align}
Since repeated fusion operation is well defined, we naturally consider $\bs_{\ell=1}^n \mathfrak{C}^\ell$ and its corresponding Cauchy problem:

\begin{equation}\label{E-9}
\begin{cases}
 \displaystyle[\dot{T}_j]_{\alpha_{*0}}=\left[{\bigstar_\tF }_{\ell=1}^n A_j^\ell\right]_{\alpha_{*0}\alpha_{*1}}[T_j]_{\alpha_{*1}} +\sum_{i_*\in\{0, 1\}^m}\frac{\left({\bs_c}_{\ell=1}^n \mathfrak{K}^\ell\right)_{i_*}}{N} \\
 \displaystyle \hspace{3cm} \times  \sum_{k=1}^N\left([T_k]_{\alpha_{*i_*}}\overline{[{T}_j]}_{\alpha_{*1}}[T_j]_{\alpha_{*(1-i_*)}}-[T_j]_{\alpha_{*i_*}}\overline{[{T}_k]}_{\alpha_{*1}}[T_j]_{\alpha_{*(1-i_*)}}\right),\quad t>0,\\
 \displaystyle T_j(0)=\bigotimes_{\ell=1}^n(T_j^\ell)^0, \quad \|(T_j^\ell)^0\|_\tF=1,\quad  j \in [N],
\end{cases}
\end{equation}
where $m:=\sum_{l=1}^n m_l$ is the sum of ranks. Similar to the double tensor model in Theorem \ref{T4.1}, we propose the multiple tensor model as follows.

\begin{theorem}\label{T5.1}
Let $\mathfrak{C}^1, \mathfrak{C}^2, \cdots, \mathfrak{C}^n\in \mathfrak{D}$ be the characteristic symbols of the Cauchy problems to the  LT models.  If $\{T_j\}_{j=1}^N$ is a solution to the Cauchy problem associated to  $\bs_{\ell=1}^n\mathfrak{C}^\ell$,   then $T_j(t)$ can be decomposed as 
\[
T_j(t)=\bigotimes_{\ell=1}^n T_j^\ell(t), \quad t>0,
\]
where $\{(T_j^\ell)_{\ell=1}^n\}_{j=1}^N$ is a solution to the following system: for $t>0$, 
\begin{align}\label{E-10}
\begin{cases}
\displaystyle[\dot{T}_j^\ell]_{\beta^\ell_{*0}}=[A_j^\ell]_{\beta^\ell_{*0}\beta^\ell_{*1}}[T_j^\ell]_{\beta^\ell_{*1}} +\sum_{j_*\in\{0, 1\}^{m_i}} \frac{\kappa_{j_*}^\ell}{N}  \\
\displaystyle \times \sum_{k=1}^N \Bigg(\prod_{\substack{p=1\\p\neq \ell}}^n\langle T_j^p, T_k^p\rangle_\tF[T_k^\ell]_{\beta^i_{*j_*}}\overline{[{T}_j^\ell]}_{\beta^i_{*1}}[T_j^\ell]_{\beta^\ell_{*(1-j_*)}} -
\prod_{\substack{p=1\\p\neq \ell}}^n\langle T_k^p, T_j^p\rangle_\tF[T_j^\ell]_{\beta^\ell_{*j_*}}\overline{[{T}_k^\ell]}_{\beta^\ell_{*1}}[T_j^\ell]_{\beta^\ell_{*(1-j_*)}}\Bigg), \\
T_j^\ell(0)=(T_j^\ell)^0,\quad \|(T_j^\ell)^0\|_\tF=1,\quad  \ell \in [n], \quad j \in [N].
\end{cases}
\end{align}
\end{theorem}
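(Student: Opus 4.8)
The plan is to establish the decomposition by \emph{construction plus uniqueness}, mirroring the two-fold derivation of \eqref{D-18} carried out in Section~\ref{sec:4.2}. Concretely, I would first let $\{(T_j^\ell)_{\ell=1}^n\}_{j=1}^N$ denote the solution of the coupled system \eqref{E-10} subject to the initial data $\{(T_j^\ell)^0\}$, then set $\widetilde{T}_j := \bigotimes_{\ell=1}^n T_j^\ell$ and verify directly that $\{\widetilde{T}_j\}$ solves the fused Cauchy problem \eqref{E-9} associated with $\bs_{\ell=1}^n\mathfrak{C}^\ell$. Since the right-hand side of \eqref{E-9} is polynomial in the entries of $\{T_j\}$ and each component norm is conserved by the gain--loss cubic structure, \eqref{E-9} is globally well-posed with a unique solution; as $\widetilde{T}_j(0)=\bigotimes_\ell (T_j^\ell)^0 = T_j(0)$, uniqueness forces $T_j\equiv\widetilde{T}_j$, which is the claimed decomposition. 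Alternatively one may argue by induction on $n$, writing $\bs_{\ell=1}^n\mathfrak{C}^\ell=(\bs_{\ell=1}^{n-1}\mathfrak{C}^\ell)\star\mathfrak{C}^n$ via Proposition~\ref{P5.1} and peeling off one factor with Theorem~\ref{T4.1}; the subtlety there is that the residual $(n-1)$-fold system inherits an extra state-dependent modulation $\langle T_j^n,T_k^n\rangle_\tF$, so one must run the induction on a strengthened statement carrying a Hermitian scalar modulation $\{w_{jk}\}$ with $w_{kj}=\overline{w_{jk}}$.

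Differentiating $\widetilde{T}_j$ by the Leibniz rule gives $[\dot{\widetilde{T}}_j]_{\alpha_{*0}}=\sum_{\ell=1}^n [\dot{T}_j^\ell]_{\beta^\ell_{*0}}\prod_{p\neq\ell}[T_j^p]_{\beta^p_{*0}}$, into which I substitute \eqref{E-10}. For the free-flow contribution I would use that, by the associativity of $\star_\tF$ (Lemma~\ref{L5.2}), the fused frequency tensor is block-diagonal, namely $[{\bigstar_\tF}_{\ell=1}^n A_j^\ell]_{\alpha_{*0}\alpha_{*1}}=\sum_{\ell=1}^n [A_j^\ell]_{\beta^\ell_{*0}\beta^\ell_{*1}}\prod_{p\neq\ell}\delta_{\beta^p_{*0}\beta^p_{*1}}$; contracting this with $[\widetilde{T}_j]_{\alpha_{*1}}=\prod_\ell [T_j^\ell]_{\beta^\ell_{*1}}$ collapses the Kronecker deltas and reproduces exactly $\sum_\ell [A_j^\ell T_j^\ell]_{\beta^\ell_{*0}}\prod_{p\neq\ell}[T_j^p]_{\beta^p_{*0}}$, the free-flow part of $\dot{\widetilde{T}}_j$.

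For the interaction terms the key tool is the multiplicativity of the Frobenius inner product under tensor products, $\langle \bigotimes_p X^p,\bigotimes_p Y^p\rangle_\tF=\prod_p \langle X^p,Y^p\rangle_\tF$, so that the prefactor $\prod_{p\neq\ell}\langle T_j^p,T_k^p\rangle_\tF$ in \eqref{E-10} may be absorbed by writing each inner product as the contracted pair $[T_k^p]_{\beta^p_{*1}}\overline{[T_j^p]}_{\beta^p_{*1}}$. Assembling the $\ell$-th coupling term of \eqref{E-10} with the untouched factors $\prod_{p\neq\ell}[T_j^p]_{\beta^p_{*0}}$ and regrouping indices, exactly the computation \eqref{D-12}--\eqref{D-13} performed one slot at a time, identifies it with the standard Lohe tensor gain/loss pair $[T_k]_{\alpha_{*i_*}}\overline{[T_j]}_{\alpha_{*1}}[T_j]_{\alpha_{*(1-i_*)}}-[T_j]_{\alpha_{*i_*}}\overline{[T_k]}_{\alpha_{*1}}[T_j]_{\alpha_{*(1-i_*)}}$ evaluated at the fused multi-index $i_*=(\mathbf{1}_{m_1},\dots,j_*,\dots,\mathbf{1}_{m_n})$, with $j_*\in\{0,1\}^{m_\ell}$ in the $\ell$-th block and all-ones elsewhere, weighted by $\kappa_{j_*}^\ell$. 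Finally, iterating Lemma~\ref{L5.1} I would check that the fused coupling tensor ${\bs_c}_{\ell=1}^n\mathfrak{K}^\ell$ equals $\sum_{\ell=1}^n \kappa^\ell_{j^{(\ell)}_*}\prod_{p\neq\ell}\delta_{j^{(p)}_*,\mathbf{1}_{m_p}}$, hence is supported precisely on these ``single non-trivial block'' indices, taking the value $\kappa_{j_*}^\ell$ there and the sum $\sum_\ell \kappa_{\mathbf{1}_{m_\ell}}^\ell$ at $i_*=\mathbf{1}_m$; summing over $\ell$ and $j_*$ then matches the interaction sum in \eqref{E-9} term by term.

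The main obstacle is purely the combinatorial index bookkeeping in the previous paragraph: one must verify that summing the single-slot couplings over all $\ell$ and $j_*$ produces \emph{all and only} the multi-indices in the support of ${\bs_c}_{\ell=1}^n\mathfrak{K}^\ell$, with the overlap at $i_*=\mathbf{1}_m$ (arising when $j_*=\mathbf{1}_{m_\ell}$ for each $\ell$) accounting correctly for the additivity built into $\star_c$. Once this matching is in place, together with the free-flow identity, $\widetilde{T}_j$ satisfies \eqref{E-9} exactly, and the uniqueness argument closes the proof.
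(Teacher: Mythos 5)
Your proposal is correct, but it takes a different route from the paper. The paper proves Theorem \ref{T5.1} by induction on $n$: the base case $n=2$ is Theorem \ref{T4.1}, and in the induction step the first two factors are grouped into $S_j=T_j^1\otimes T_j^2$, the induction hypothesis is applied to the $q$-fold system containing $S_j$, and the multiplicativity $\langle S_j,S_k\rangle_\tF=\langle T_j^1,T_k^1\rangle_\tF\langle T_j^2,T_k^2\rangle_\tF$ is used to unfold the modulation factors and recover the $(q+1)$-fold system \eqref{E-11}. Your primary argument instead performs the $n$-fold computation directly --- Leibniz rule, block-diagonal structure of ${\bigstar_\tF}_{\ell=1}^n A_j^\ell$, support of ${\bs_c}_{\ell=1}^n\mathfrak{K}^\ell$ on single-non-trivial-block multi-indices with additive overlap at $i_*=\mathbf{1}_m$ --- and then closes with uniqueness of solutions to \eqref{E-9}, which is exactly the right way to pass from ``the tensor product of solutions of \eqref{E-10} solves \eqref{E-9}'' to the decomposition claimed for an arbitrary solution of \eqref{E-9}; the paper leaves this uniqueness step implicit. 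What the direct approach buys is that it avoids the delicate point you correctly flag in the inductive route: after peeling off one factor the residual system is not literally an LT model but one modulated by the state-dependent weights $\langle T_j^n,T_k^n\rangle_\tF$, so the induction hypothesis as stated does not apply verbatim and must be strengthened (or, as the paper does, applied somewhat loosely by regrouping factors). The cost is heavier multi-index bookkeeping in a single step, but your identification of the fused coupling tensor as $\sum_{\ell}\kappa^\ell_{j_*^{(\ell)}}\prod_{p\neq\ell}\delta_{j_*^{(p)},\mathbf{1}_{m_p}}$ and the matching of the overlap at $i_*=\mathbf{1}_m$ with the additivity built into $\star_c$ is precisely the content needed, so the argument goes through.
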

\begin{proof}
For a proof, we use a mathematical induction and split the proof into two steps. \\

\noindent$\bullet$  Step A (Initial step):~The case of $n=1$ is trivial, and we have already verified the case of $n=2$ in Theorem \ref{T4.1}.\\

\noindent$\bullet$ Step B (Induction step):~ We assume the statement holds for $n=q\geq2$. Now, we  prove the case when $n=q+1$.\\

Let characteristic symbols $\mathfrak{C}^1, \mathfrak{C}^2, \cdots, \mathfrak{C}^q, \mathfrak{C}^{q+1}$ be given, and we set $\{T_j\}$ to be a solution  to the Cauchy problem whose characteristic symbol is  $\bs_{\ell=1}^{q+1}\mathfrak{C}^\ell$. Below, we show that $\{T_j^\ell\}$ with $j \in [N]$ and $\ell \in [q+ 1]$ is a solution to the following system: for $t>0$, 
\begin{equation}\label{E-11}
\begin{cases}
\displaystyle [\dot{T}_j^\ell]_{\beta^\ell_{*0}}=[A_j^\ell]_{\beta^\ell_{*0}\beta^\ell_{*1}}[T_j^\ell]_{\beta^\ell_{*1}}+\sum_{j_*\in\{0, 1\}^{m_i}}\frac{\kappa_{j_*}^\ell}{N }  \\
\displaystyle \hspace{0.2cm} \times \sum_{k=1}^N \Bigg(\prod_{\substack{p=1\\p\neq \ell}}^{q+1}\langle T_j^p, T_k^p\rangle_\tF[T_k^\ell]_{\beta^i_{*j_*}}\overline{[{T}_j^\ell]}_{\beta^i_{*1}}[T_j^\ell]_{\beta^\ell_{*(1-j_*)}} -
\prod_{\substack{p=1\\p\neq \ell}}^{q+1}\langle T_k^p, T_j^p\rangle_\tF[T_j^\ell]_{\beta^\ell_{*j_*}}\overline{[{T}_k^\ell]}_{\beta^\ell_{*1}}[T_j^\ell]_{\beta^\ell_{*(1-j_*)}}\Bigg), \\
T_j^\ell(0)=(T_j^\ell)^0,\quad \|(T_j^\ell)^0\|_\tF=1,\quad \ell \in [q+1], \quad j \in [N].
\end{cases}
\end{equation}
We set 
\[
S_j=T_j^1\otimes T_j^2, \quad  j \in [N].
\]
Then, it follows from the induction hypothesis on $n=q$ that
\[
T_j=S_j\otimes T_j^3\otimes T_j^4\otimes\cdots T_j^{q+1}.
\]
Moreover, each dynamics of $S_j$ and $T_j^\ell$ with $3\leq \ell \leq q+1$ can be represented in  the following forms.\\

\noindent$\diamond$ (Dynamics of $S_j$): By induction hypothesis, we have
\begin{align}
\begin{aligned}\label{E-12}
&\displaystyle[\dot{S}_j]_{\beta^1_{*0}\beta^2_{*0}} =[A_j^1\star_\tF  A_j^2]_{\beta^1_{*0}\beta^2_{*0}\beta^1_{*1}\beta^2_{*1}}[S_j]_{\beta^1_{*1}\beta^2_{*1}} \\
&\hspace{1cm}+\sum_{\substack{j_*\in\{0, 1\}^{m_1}\\k_*\in\{0, 1\}^{m_2}}} \frac{( \mathfrak{K}^1\star_\tF  \mathfrak{K}^2)_{(j_*, k_*)}}{N} \sum_{p=1}^N   \left(\prod_{l=3}^{q+1}\langle T_j^l, T_p^l\rangle_\tF[S_p]_{\beta^1_{*j_*}\beta^2_{*k_*}}\overline{[{S}_j]}_{\beta^1_{*1}\beta^2_{*1}}[S_j]_{\beta^1_{*(1-j_*)}\beta^2_{*(1-k_*)}} \right. \\
&\hspace{1.5cm} -\left.\prod_{l=3}^{q+1}\langle T_p^l, T_j^l\rangle_\tF[S_j]_{\beta^1_{*j_*}\beta^2_{*k_*}}\overline{[{S}_p]}_{\beta^1_{*1}\beta^2_{*1}}[S_j]_{\beta^1_{*(1-j_*)}\beta^2_{*(1-k_*)}}\right).
\end{aligned}
\end{align}
Let $\{T_j^1,T_j^2\}$ be a solution to  \eqref{E-11}. Then, it directly follows from Theorem \ref{D4.1} that $S_j=T_j^1\otimes T_j^2$ is a solution to \eqref{E-12}.\\

\noindent$\diamond$ (Dynamics of $T_j^\ell$ with $3\leq \ell\leq q+1$):~By induction hypothesis, we have
\begin{align}
\begin{aligned} \label{E-13}
[\dot{T}_j^\ell]_{\beta^i_{*0}} &=[A_j^\ell]_{\beta^i_{*0}\beta^i_{*1}}[T_j^\ell]_{\beta^i_{*1}}  \\
&\hspace{0.2cm}+\sum_{j_*\in\{0, 1\}^{m_i}}\sum_{k=1}^N\frac{\kappa_{j_*}^\ell}{N} \Bigg(\langle S_j, S_k\rangle_\tF\prod_{\substack{p=3\\p\neq \ell}}^{q+1}\langle T_j^p, T_k^p\rangle_\tF[T_k^\ell]_{\beta^\ell_{*j_*}}\overline{[{T}_j^\ell]}_{\beta^\ell_{*1}}[T_j^\ell]_{\beta^\ell_{*(1-j_*)}} \\
&\hspace{1cm}-\langle S_k, S_j\rangle_\tF\prod_{\substack{p=3\\p\neq \ell}}^{q+1}\langle T_k^p, T_j^p\rangle_\tF[T_j^\ell]_{\beta^\ell_{*j_*}}\overline{[{T}_k^\ell]}_{\beta^\ell_{*1}}[T_j^\ell]_{\beta^\ell_{*(1-j_*)}}\Bigg).
\end{aligned}
\end{align}
Since the following relation holds
\[
\langle S_j, S_k\rangle_\tF=\langle T_j^1, T_k^1\rangle_\tF\cdot\langle T_j^2, T_k^2\rangle_\tF,
\]
one can rewrite \eqref{E-13} as \eqref{E-11}. This establishes the case for $n=q+1$. Therefore, by the mathematical induction, we obtain the desired assertion for all $n\in \bbn$. 
\end{proof}

Since systems \eqref{D-18}  and \eqref{E-10} share the same structure, we have the following corollary.
\begin{corollary}
Let $\mathfrak{C}_1, \mathfrak{C}_2, \cdots, \mathfrak{C}_n\in \mathfrak{D}$ be characteristic symbols of LT models given as \eqref{E-8} and $\sigma:\{1, 2, \cdots, n\}\to\{1,2, \cdots, n\}$ be a permutation. If $\{T^\sigma_j\}_{j=1}^N$ is a solution to system  $\bs_{\ell=1}^n\chi_{\sigma(\ell)}$, then $T_j(t)$ can be decomposed as 
\[
T^\sigma_j(t)=\bigotimes_{\ell=1}^n T_j^{\sigma(\ell)}(t),
\]
and $\{(T_j^\ell)_{\ell=1}^n\}_{j=1}^N$ is a solution to \eqref{E-9}.
\end{corollary}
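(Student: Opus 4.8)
The plan is to obtain the statement as a direct consequence of Theorem \ref{T5.1} applied to a reindexed family, together with the observation that the factor-level system \eqref{E-10} is insensitive to the order in which the models are fused. The permutation $\sigma$ enters only through this reindexing, so no fresh induction is required. First I would set $\hat{\mathfrak{C}}^\ell := \mathfrak{C}_{\sigma(\ell)}$ for $\ell \in [n]$, so that the permuted fusion $\bs_{\ell=1}^n \mathfrak{C}_{\sigma(\ell)}$ is literally the natural-order fusion $\bs_{\ell=1}^n \hat{\mathfrak{C}}^\ell$ of the family $(\hat{\mathfrak{C}}^\ell)_{\ell=1}^n$. Applying Theorem \ref{T5.1} verbatim to $(\hat{\mathfrak{C}}^\ell)_\ell$ then yields the decomposition
\[
T_j^\sigma(t) = \bigotimes_{\ell=1}^n \hat{T}_j^\ell(t) = \bigotimes_{\ell=1}^n T_j^{\sigma(\ell)}(t), \quad t>0, \quad j\in[N],
\]
which is exactly the asserted factorization in the permuted order, and simultaneously guarantees that the factors solve the coupled system \eqref{E-10} written for $(\hat{\mathfrak{C}}^\ell)_\ell$.

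The heart of the matter, and the only content beyond Theorem \ref{T5.1}, is to show that this coupled factor system does not depend on $\sigma$, i.e. that $\star$ is \emph{commutative at the level of the factor dynamics} even though it is genuinely non-commutative on $\mathfrak{D}$ (since $\mathbf{d}^1\star_s \mathbf{d}^2 \neq \mathbf{d}^2\star_s \mathbf{d}^1$, the tensor $T_j^\sigma$ lives in a permuted index space, and $\bs_\ell \mathfrak{C}_{\sigma(\ell)}$, $\bs_\ell \mathfrak{C}^\ell$ are not equal elements of $\mathfrak{D}$). The key point is that the coupling weight attached to the $\ell$-th factor in \eqref{E-10} is the product $\prod_{p\neq \ell} \langle T_j^p, T_k^p\rangle_\tF$ ranging over \emph{all} remaining factors; since these Frobenius inner products are complex scalars and scalar multiplication is commutative, the weight depends only on the unordered index set $\{p : p\neq \ell\}$. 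I would therefore relabel $p = \sigma(q)$ and observe that each such product is unchanged, so the evolution equation for every $T_j^\ell$ reduces to the $\ell$-th equation of \eqref{E-10} with its intrinsic data $A_j^\ell$ and $\kappa^\ell_{j_*}$, independently of $\sigma$. This shows that $\{(T_j^\ell)_{\ell=1}^n\}_{j=1}^N$ solves \eqref{E-10}, and the same construction that passes from \eqref{E-10} to \eqref{E-9} (carried out in the natural order) shows that their natural-order product $\bigotimes_{\ell=1}^n T_j^\ell$ solves \eqref{E-9}.

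The main obstacle is conceptual rather than computational: one must keep straight the two levels at which (non-)commutativity lives. The fused symbols $\bs_\ell \mathfrak{C}_{\sigma(\ell)}$ and $\bs_\ell \mathfrak{C}^\ell$ differ by the tensor-leg permutation carrying $\bigotimes_\ell T_j^{\sigma(\ell)}$ to $\bigotimes_\ell T_j^\ell$, so the decomposition of $T_j^\sigma$ must be stated in the permuted order exactly as written; yet the single-particle trajectories $T_j^\ell(t)$ are intrinsic to each $\mathfrak{C}^\ell$ and carry no memory of $\sigma$. Making this distinction precise, and checking that the relabeling of the scalar product is legitimate for every choice of uncontracted multi-index, is the step requiring care; once it is in place, the identification of the factor system with \eqref{E-10} and hence the conclusion for \eqref{E-9} are immediate.
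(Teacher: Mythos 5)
Your proposal is correct and is exactly the argument the paper intends: the paper states this corollary without proof as an immediate consequence of Theorem \ref{T5.1}, and your route---applying Theorem \ref{T5.1} verbatim to the reindexed family $\hat{\mathfrak{C}}^\ell := \mathfrak{C}_{\sigma(\ell)}$ and observing that the coupling weights $\prod_{p\neq\ell}\langle T_j^p, T_k^p\rangle_\tF$ are products of scalars, hence invariant under the relabeling $p=\sigma(q)$---is precisely the ``same structure'' observation the authors invoke. Your added care in distinguishing non-commutativity of $\star$ on $\mathfrak{D}$ from the $\sigma$-independence of the factor-level system \eqref{E-10} (which is what the corollary's reference to \eqref{E-9} should be read through) makes explicit what the paper leaves implicit, but it is the same proof.
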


\subsection{Weak coupling of the LT models for  low-rank tensors} \label{sec:5.3}
In this subsection, we discuss two weak couplings of the Kuramoto models and the swarm sphere models by means of the fusion operation.

\subsubsection{Same ranks} First, we review two Kuramoto models  from Section \ref{sec:3.2} whose characteristic symbols are given as 
\[
\mathfrak{C}_1=\left(\emptyset,   \frac{\kappa_1}{2}, \{\mathrm{i}\nu^1_j\}, \{e^{\mathrm{i}(\theta_j^1)^0}\}\right),\quad \mathfrak{C}_2=\left(\emptyset ,   \frac{\kappa_2}{2}, \{\mathrm{i}\nu^2_j\}, \{e^{\mathrm{i}(\theta_j^2)^0}\}\right).
\]
Then, the corresponding Kuramoto models correspond to 
\begin{align*}
\mathfrak{C}_1 ~\Longleftrightarrow~
\begin{cases}
\displaystyle\dot{\theta}^1_j=\nu_j^1+\frac{\kappa_1}{N}\sum_{k=1}^N\sin(\theta^1_k-\theta^1_j),\\
\theta_j^1(0)=(\theta_j^1)^0,
\end{cases}
\hspace{0.5cm}
\mathfrak{C}_2~\Longleftrightarrow~
\begin{cases}
\displaystyle\dot{\theta}^2_j=\nu_j^2+\frac{\kappa_2}{N}\sum_{k=1}^N\sin(\theta^2_k-\theta^2_j),\\
\theta_j^2(0)=(\theta_j^2)^0.
\end{cases}
\end{align*}
Then, the weak coupling of two characteristic symbols $\mathfrak{C}:=\mathfrak{C}_1\star \mathfrak{C}_2$ is explicitly  given  as follows:
\[
\mathfrak{C} =\mathfrak{C}_1\star \mathfrak{C}_2=\left( \emptyset, \frac{\kp_1+\kp_2}{2} , \{\mathrm{i}(\nu^1_j+\nu^2_j)\}, \{e^{\mathrm{i}\left((\theta_j^1)^0+(\theta_j^2)^0\right)}\}\right),
\]
and the corresponding Cauchy problem becomes 
\begin{equation} \label{E-14}
\begin{cases}
\displaystyle\dot{\theta}^1_j+\dot{\theta}^2_j=(\nu_j^1+\nu_j^2)+\frac{\kappa_1+\kappa_2}{N}\sum_{k=1}^N\sin((\theta_k^1+\theta_k^2)-(\theta_j^1+\theta_j^2)),\\
(\theta_j^1+\theta_j^2)(0)=(\theta_j^1)^0+(\theta_j^2)^0.
\end{cases}
\end{equation}
Now, we introduce new variables:
\begin{equation} \label{E-15}
\varphi_j := \theta_j^1 + \theta_j^2, \quad \nu_j := \nu_j ^1 + \nu_j^2,\quad \kp:= \kp_1 + \kp_2.
\end{equation}
Then, system \eqref{E-14} can cast in terms of \eqref{E-15}:
\[
\dot \varphi_j =\nu_j + \frac\kp N \sum_{k=1}^N \sin (\varphi_k - \varphi_j),
\]
which is exactly the Kuramoto model. Thus, the weak coupling of two Kuramoto models is still the Kuramoto model. \newline

Second, we consider the weak coupling  of two swarm sphere models whose characteristic symbols are given by 
\begin{equation*} \label{E-16}
\mathfrak{C}_1=(d_1, (\kappa_{1},0), \{\Omega_j^1\}, \{(x_j^1)^0\}),\quad \mathfrak{C}_2=(d_2, (\kappa_{2}, 0), \{\Omega^2_j\}, \{(x_j^2)^0\}),
\end{equation*} 
with $\Omega_j^1$ being a $d_1\times d_1$ skew-symmetric matrix and $\Omega_j^2$ being a $d_2\times d_2$ skew-symmetric matrix. Furthermore, $(x_j^1)^0$ and $(x_j^2)^0$ are real-valued with unit norms. In particular, we set the second components of coupling strength tensors to be zero for simplicity. Then, the fusion of two characteristic symbols $\mathfrak{C} = \mathfrak{C}_1\star \mathfrak{C}_2$ can be calculated as
\begin{equation} \label{E-16-1}
\mathfrak{C}_1\star \mathfrak{C}_2=\left((d_1, d_2), \begin{pmatrix}
0&\kappa_2\\
\kappa_1&0
\end{pmatrix},
\{\Omega_j^1\star_\tF  \Omega_j^2 \}, \{(x_j^1)^0\otimes (x_j^2)^0\}\right),
\end{equation}
which is exactly the same  as \eqref{C-7-2} except for the natural frequency and the initial data. Thus, the weak coupling of two swarm sphere models is the generalized Lohe matrix model. However, it follows from \cite{H-K-P1} that the weakly coupled model corresponding to the characteristic symbol \eqref{E-16-1} is the double sphere model \eqref{B-1} (see Proposition \ref{P2.2}). \newline

\subsubsection{Different ranks} So far, we have investigated the fusion of two characteristic symbols with the same ranks. Now, we return to the fusion of two characteristic symbols with {\it different} ranks.  We recall the quantum Lohe model introduced in Section \ref{sec:4.1}: 
\begin{align}\label{E-17}
\begin{cases}
\displaystyle\dot{\theta}_j=\nu_j+\frac{\kappa_1}{N}\sum_{k=1}^N \langle x_j, x_k\rangle\sin(\theta_k-\theta_j), \quad t>0, \vspace{0.2cm}\\
\displaystyle\dot{x}_j=\Omega_j x_j+\frac{\kappa_2}{N}\sum_{k=1}^N\cos(\theta_k-\theta_j)(x_k-\langle x_j, x_k\rangle x_j),\\
(\theta_j(0), x_j(0))=(\theta_j^0, x_j^0)\in\bbr\times \bbs^{d-1},\quad j\in [N].
\end{cases}
\end{align}
Recall that model \eqref{E-17} has been derived from the $\mathbf{U}(2)$ parametrization of the Lohe matrix model. Our goal is to verify that model \eqref{E-17} can be obtained from the weak coupling of two models: the Kuramoto model and the swarm sphere model.  First, we rewrite \eqref{E-17} in the form of the double sphere model. Precisely, for given $\theta_j$ in \eqref{E-17}, we introduce new variables in $\bbs^1$:
\[
y_j=(\cos\theta_j, \sin\theta_j)^\top\in\bbs^1,
\]
Then, system \eqref{E-17} can be rewritten as follows:
\begin{align}\label{E-18}
\begin{cases}
\displaystyle\dot{y}_j=\nu_jJy_j+\frac{\kappa_1}{N}\sum_{k=1}^N\left(\langle x_j, x_k\rangle y_ky_j^\top y_j-\langle x_k, x_j\rangle y_jy_k^\top y_j\right),\quad J=\begin{bmatrix}
0&-1\\
1&0\\
\end{bmatrix},
\\
\displaystyle\dot{x}_j=\Omega_j x_j+\frac{\kappa_2}{N}\sum_{k=1}^N\left(\langle y_j, y_k\rangle x_kx_j^\top x_j-\langle y_k, y_j\rangle x_jx_k^\top x_j\right),\\
y_j(0)=(\cos\theta_j^0, \sin\theta_j^0)^\top,\quad x_j(0)=x_j^0.
\end{cases}
\end{align}
If the subsystems of \eqref{E-18} are decoupled, then we have
\begin{align*}\label{E-19}
\begin{cases}
\displaystyle\dot{y}_j=\nu_jJy_j+\frac{\kappa_1}{N}\sum_{k=1}^N\left(y_ky_j^\top y_j- y_jy_k^\top y_j\right),~t > 0,\\
\displaystyle\dot{x}_j=\Omega_j x_j+\frac{\kappa_2}{N}\sum_{k=1}^N\left( x_kx_j^\top x_j- x_jx_k^\top x_j\right),\\
y_j(0)=(\cos\theta_j^0, \sin\theta_j^0)^\top,\quad x_j(0)=x_j^0.
\end{cases}
\end{align*}
Thus, two characteristic symbols of two equations in \eqref{D-5} become 
\[
\mathfrak{C}_1=(2, \kappa_1, \{\nu_j J\}, \{(\cos\theta_j^0, \sin\theta_j^0)^\top\}),\quad \mathfrak{C}_2=(d, \kappa_2,\{\Omega_j\}, \{x_j^0\}).
\]
Finally, we conclude that the characteristic symbol of \eqref{E-18} can be obtained as the fusion of $\mathfrak{C}_1$ and $\mathfrak{C}_2$:
\[
\mathfrak{C}_1\star\mathfrak{C}_2=\left((2, d), \begin{bmatrix}
0&\kappa_1\\
\kappa_2&0
\end{bmatrix}, \{\nu_j J\star \Omega_j\}, \{(\cos\theta_j^0, \sin\theta_j^0)^\top\otimes x_j^0\}
\right).
\]
Note that the characteristic symbol of the quantum Lohe model is a special case of the   double sphere model. So, we can conclude that the quantum Lohe model is the special case of the   double sphere model.

\subsection{Gradient flow formulation} \label{sec:5.4}
In this subsection, we discuss conditions in which the multiple tensor model can be represented as a gradient flow with a suitable analytical potential. Note that multiple sphere and multiple matrix models introduced in Section \ref{sec:2} can be written as gradient flows with total distance functionals as potentials.

 Let $\{T_j\}$ be a solution to the double tensor model \eqref{C-3}. As in \cite{H-K-P2, H-K-P1}, we consider the following potential function:
\begin{equation} \label{E-20} 
\mathcal{V}(\{T_j\})=\frac{\kappa}{2N}\sum_{i, j=1}^N\langle T_i, T_j\rangle_\tF.
\end{equation} 
If there exist two tensors $T_j^1$ and $T_j^2$ such that $T_j = T_j^1 \otimes T_j^2$, then  system \eqref{E-20} becomes
\[
\mathcal{V}(\{T_j\})=\frac{\kappa}{2N}\sum_{i, j=1}^N\langle T_i^1, T_j^1\rangle_\tF\cdot\langle T_i^2, T_j^2\rangle_\tF.
\]
Thus,  if a solution to the multiple tensor model is decomposed by using tensor products, then the   the multiple tensor model becomes a gradient system. 

\begin{theorem} \label{T5.2} 
Let $\mathfrak{C}^1$ and $\mathfrak{C}^2$ be given characteristic symbols, and let $T_j^1$ and $T_j^2$ be the solutions to the LT models \eqref{LT} corresponding to characteristic symbols $\mathfrak{C}_1$ and $\mathfrak{C}_2$, respectively. Suppose that the systems for $\mathfrak{C}_1$ and $\mathfrak{C}_2$ are represented as gradient flows with the following potential functionals:
\begin{align*}
\mathcal{V}_1(\{T_j^1\})=\frac{\kappa}{2N}\sum_{i, j=1}^N\langle T_i^1,T_j^1\rangle_\tF,\quad
\mathcal{V}_2(\{T_j^2\})=\frac{\kappa}{2N}\sum_{i, j=1}^N\langle T_i^2,T_j^2\rangle_\tF,
\end{align*}
on state spaces $M_1$ and $M_2$, respectively. Then, the weakly coupled system corresponding to the characteristic symbol $\mathfrak{C}^1\star \mathfrak{C}^2$ can be expressed as a gradient flow with the following potential:
\[
\mathcal{V}(\{T_j^1\otimes T_j^2\})=\frac{\kappa}{2N}\sum_{i, j=1}^N\langle T_i^1, T_j^1\rangle_\tF\cdot\langle T_i^2, T_j^2\rangle_\tF,\quad T_j^1 \otimes T_j^2 \in M_1 \times M_2.
\]
\end{theorem}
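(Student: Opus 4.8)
The plan is to read the gradient-flow hypothesis as the statement that, after discarding the block skew-Hermitian free flows (which are isometric rotations and hence never gradient), the vector field of each decoupled LT model is the Riemannian gradient of the respective potential on the product of unit Frobenius spheres, and then to exploit the fact that the Riemannian gradient on a metric product splits factorwise. Concretely, each particle state $T_j^a$ ($a=1,2$) is constrained to the unit sphere $\bbs_a^{(j)}:=\{S:\|S\|_\tF=1\}$ with the real Riemannian structure inherited from $\mathrm{Re}\langle\cdot,\cdot\rangle_\tF$, whose tangential projection is $P_{T_j^a}X = X - \mathrm{Re}\langle T_j^a, X\rangle_\tF\,T_j^a$. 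Writing $M_a=\prod_{j=1}^N \bbs_a^{(j)}$, the space $M_1\times M_2$ carries the product metric, so that for any smooth $\mathcal V$ the gradient decomposes blockwise, its $T_j^1$-block being $P_{T_j^1}\partial_{T_j^1}\mathcal V$ and its $T_j^2$-block being $P_{T_j^2}\partial_{T_j^2}\mathcal V$.

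First I would record the single-model computation, which fixes the conventions: the ambient differential of $\mathcal V_1$ in the variable $T_j^1$ equals $\frac{\kappa}{N}\sum_k T_k^1$, and projecting by $P_{T_j^1}$ returns precisely the aggregation term $\frac{\kappa}{N}\sum_k(T_k^1-\langle T_k^1,T_j^1\rangle_\tF T_j^1)$ of the decoupled LT model. This is exactly the content of the hypothesis that $\{T_j^1\}$ is a gradient flow of $\mathcal V_1$ on $M_1$, and symmetrically for $\mathcal V_2$ on $M_2$.

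The heart of the argument is the analogous computation for the product potential. Differentiating
\[
\mathcal V(\{T_j^1\otimes T_j^2\})=\frac{\kappa}{2N}\sum_{i,k=1}^N\langle T_i^1,T_k^1\rangle_\tF\,\langle T_i^2,T_k^2\rangle_\tF
\]
with respect to $T_j^1$, the factors $\langle T_i^2,T_k^2\rangle_\tF$ are constant in $T_j^1$ and survive as state-dependent weights; the $T_j^1$-derivative of the first-factor inner products together with the conjugation symmetry of the total double sum collapses to $\frac{\kappa}{N}\sum_k\langle T_j^2,T_k^2\rangle_\tF\,T_k^1$. Applying $P_{T_j^1}$ yields
\[
\frac{\kappa}{N}\sum_{k=1}^N\langle T_j^2,T_k^2\rangle_\tF\big(T_k^1-\langle T_k^1,T_j^1\rangle_\tF T_j^1\big),
\]
which is exactly the interaction term in the first line of the double tensor model \eqref{D-6}, with the coupling weight inherited from $\mathfrak C^1$. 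The computation in the variable $T_j^2$ is identical after swapping the roles of the two factors. Hence the coupled vector field of \eqref{D-6} coincides blockwise with $\mathrm{grad}_{M_1\times M_2}\mathcal V$, which is the assertion.

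The main obstacle is conceptual rather than computational and lies in the hypothesis itself: the free-flow tensors $A_j^a$ are block skew-Hermitian, so $A_j^a T_j^a$ is tangent to the sphere yet generates an isometric rotation admitting no potential, and the gradient-flow identification can hold only when these free flows are switched off, equivalently when the hypothesis is read as referring to the purely aggregative part. I would state this restriction explicitly, observing that it is precisely the regime in which the double sphere model \eqref{B-1} and the double matrix model \eqref{B-3-1} are themselves gradient flows. A secondary point to handle with care is the complex case: $\mathcal V$ is real because the full double sum is invariant under conjugation, and the relevant gradient is the real Riemannian one; the gain/loss asymmetry of the weights $\langle T_j^2,T_k^2\rangle_\tF$ versus $\langle T_k^2,T_j^2\rangle_\tF$ appearing in \eqref{D-6} is exactly what the derivative of this conjugation-symmetric potential produces, so no inconsistency arises.
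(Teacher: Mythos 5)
Your proposal is correct and follows the same route the paper takes: the key identity is the multiplicativity of the Frobenius inner product under tensor products, $\langle T_i^1\otimes T_i^2, T_j^1\otimes T_j^2\rangle_\tF=\langle T_i^1,T_j^1\rangle_\tF\,\langle T_i^2,T_j^2\rangle_\tF$, after which the coupled interaction terms are recovered as the tangential projection of the ambient derivative of the factored potential. In fact your write-up is more complete than the paper's own treatment: the paper offers no formal proof of Theorem \ref{T5.2} beyond the observation that the potential factors on separable states, and it only verifies the gradient computation for the two low-rank examples (rank-1 real on $\bbs^{d-1}$ and rank-2 on the unitary group) in the remark that follows. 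Your general blockwise computation on the product manifold, your explicit caveat that the block skew-Hermitian free flows generate isometric, non-gradient dynamics and must be switched off for the gradient-flow reading of the hypothesis to make sense, and your observation that the real-valuedness of $\mathcal V$ under conjugation is exactly what produces the gain/loss asymmetry of the weights in \eqref{D-6} are all points the paper leaves implicit; they strengthen rather than deviate from its argument.
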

\begin{remark}
It is worthwhile to mention that  Theorem \ref{T5.2} is consistent with the results of \cite{H-K-P2, H-K-P1}.  More precisely, let  $\{T_j\}=\{T_j^1\otimes T_j^2\}$ be a solution to the LT model with characteristic symbol $\mathfrak{C}^1\star \mathfrak{C}^2$. For the case of rank-1 real tensors, then the LT model corresponds to the swarm sphere model, and $\{T_j\}$ lies on $\bbs^{d-1}$. Then, one finds 
\[
\dot{T}_j^1=-\nabla_{T_j^1} \mathcal{V}(\{T_j\}) \Big|_{T_{T_j^1} \bbs^{d-1}}=\frac{\kappa}{N}\sum_{k=1}^N\langle T_j^2, T_k^2\rangle_\tF\Big(
T_k^1-\langle T_k^1, T_j^1\rangle T_j^1\Big),
\]
where $T_{T_j^1} \bbs^{d-1}$ is the tangent space of $\bbs^{d-1}$ at $T_j^1$. This exactly coincides with the double sphere model. On the other hand, if  we consider rank-2 tensors and assume that $T_j$ lies on  $\mathbb{U}(d)$, then the model reduces to  
\[
\dot{T}_j^1= -\nabla_{T_j^1}\mathcal{V}(\{T_j\}) \Big|_{T_{T_j^1} \mathbb{U}(d)} =\frac{\kappa}{2N}\sum_{k=1}^N \Big(\langle T_j^2, T_k^2\rangle_\tF T_k^1-\langle T_k^2,T_j^2\rangle_\tF T_j^1 (T_k^1)^\dagger T_j^1\Big).
\]
Thus, we obtain the double matrix model. 
\end{remark}

By  mathematical induction, Theorem \ref{T5.2} can be extended to the multiple tensor model. Since the proof is straightforward, we omit the proof. 

\begin{corollary}
For $\ell \in [n]$, let  $\mathfrak{C}^\ell$ be the characteristic symbol of the system that can be represented as a gradient flow on a manifold $M_\ell$: 
\[
\mathcal{V}_\ell(\{T_j^\ell\})=\frac{\kappa}{2N}\sum_{i, j=1}^N\langle T_i^\ell, T_j^\ell\rangle_\tF,
\]
on manifold $M_\ell$ for all $\ell\in [n]$. Then, system whose characteristic symbol is given by $\bs_{\ell=1}^n \mathfrak{C}^\ell$ can   be also represented as a gradient flow with the following potential function on the product manifold $M_1\times M_2\times\cdots\times M_n$:
\[
\mathcal{V}\left(\left\{\bigotimes_{\ell=1}^nT_j^\ell\right\}\right)=\frac{\kappa}{2N}\sum_{i, j=1}^N\left(\prod_{\ell=1}^n\langle T_i^\ell, T_j^\ell\rangle_\tF\right).
\]
\end{corollary}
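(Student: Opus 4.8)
The plan is to argue by induction on the number $n$ of characteristic symbols, using Theorem \ref{T5.2} as the two-factor engine and the associativity of $\star$ from Proposition \ref{P5.1} to regroup factors. The base cases are immediate: $n=1$ is the hypothesis itself, while $n=2$ is precisely Theorem \ref{T5.2}. For the inductive step I would fix $q\geq 2$, assume the assertion for $q$, and deduce it for $q+1$.

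The structural fact that makes the induction close is the multiplicativity of the Frobenius inner product under tensor products. Writing $S_j:=\bigotimes_{\ell=1}^q T_j^\ell$, one has
\[
\langle S_i, S_j\rangle_\tF=\Big\langle \bigotimes_{\ell=1}^q T_i^\ell, \bigotimes_{\ell=1}^q T_j^\ell\Big\rangle_\tF=\prod_{\ell=1}^q\langle T_i^\ell, T_j^\ell\rangle_\tF,
\]
so the $q$-fold product potential delivered by the induction hypothesis,
\[
\mathcal V\big(\{S_j\}\big)=\frac{\kappa}{2N}\sum_{i,j=1}^N\prod_{\ell=1}^q\langle T_i^\ell, T_j^\ell\rangle_\tF=\frac{\kappa}{2N}\sum_{i,j=1}^N\langle S_i, S_j\rangle_\tF,
\]
is itself of the single-inner-product shape $\frac{\kappa}{2N}\sum\langle S_i,S_j\rangle_\tF$ demanded of a \emph{single} factor by Theorem \ref{T5.2}. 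In other words, the grouped system with characteristic symbol $\bs_{\ell=1}^q\mathfrak{C}^\ell$ is a gradient flow on $M_1\times\cdots\times M_q$ whose potential has exactly the admissible form.

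With this in hand, I would invoke associativity to write $\bs_{\ell=1}^{q+1}\mathfrak{C}^\ell=\big(\bs_{\ell=1}^q\mathfrak{C}^\ell\big)\star\mathfrak{C}^{q+1}$ and then apply Theorem \ref{T5.2} to the two factors $\bs_{\ell=1}^q\mathfrak{C}^\ell$ (solution $S_j$, state space $M_1\times\cdots\times M_q$) and $\mathfrak{C}^{q+1}$ (solution $T_j^{q+1}$, state space $M_{q+1}$). The theorem produces a gradient flow on $(M_1\times\cdots\times M_q)\times M_{q+1}$ with potential
\[
\frac{\kappa}{2N}\sum_{i,j=1}^N\langle S_i, S_j\rangle_\tF\cdot\langle T_i^{q+1}, T_j^{q+1}\rangle_\tF=\frac{\kappa}{2N}\sum_{i,j=1}^N\prod_{\ell=1}^{q+1}\langle T_i^\ell, T_j^\ell\rangle_\tF,
\]
where the last equality reinserts the multiplicativity identity and $(M_1\times\cdots\times M_q)\times M_{q+1}$ is identified with $M_1\times\cdots\times M_{q+1}$. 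This is exactly the claimed $(q+1)$-fold potential, closing the induction.

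The one step I would watch most carefully is the verification that the induction hypothesis supplies a potential in precisely the form Theorem \ref{T5.2} can ingest; this is settled entirely by the multiplicativity of $\langle\cdot,\cdot\rangle_\tF$ over tensor products, which collapses the product of $q$ inner products into the single $\langle S_i,S_j\rangle_\tF$. Beyond this, the argument is bookkeeping: the shared coupling constant $\kappa$ matches across all potentials, and Theorem \ref{T5.1} (equivalently Theorem \ref{T4.1} applied to the regrouped pair) guarantees that any solution of the fused system decomposes as $S_j\otimes T_j^{q+1}$, so that the gradient computation underlying Theorem \ref{T5.2} applies verbatim on the product manifold.
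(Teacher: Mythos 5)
Your proposal is correct and is precisely the induction on $n$ that the paper indicates but omits (the paper states only ``By mathematical induction, Theorem 5.2 can be extended to the multiple tensor model''), with Theorem 5.2 as the base case and the multiplicativity $\langle S_i,S_j\rangle_\tF=\prod_{\ell}\langle T_i^\ell,T_j^\ell\rangle_\tF$ collapsing the grouped potential into the form that Theorem 5.2 accepts. Your explicit attention to why the induction hypothesis produces a potential of admissible shape, together with the appeal to associativity and the tensor-decomposition guarantee of Theorem 5.1, supplies exactly the bookkeeping the authors deemed straightforward.
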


\subsection{Index shuffling operator and induced transforms}
In this section, we further investigate  the structure of $(\mathfrak D, \star)$. Heuristically, suppose that we consider a weakly coupled model from two LT models whose solutions are denoted by $\{T_j^1\}$ and $\{T_j^2\}$. Then, a solution to the weakly coupled model becomes $T_j^1 \otimes T_j^2$. One natural question is whether $T_j^2\otimes T_j^1$ also becomes a solution to the same weakly coupled model. Our goal of this subsection is to provide an affirmative answer. For this purpose, we introduce the index shuffling operator (or a permutation) $\sigma:[m]\to[m]$ on rank-$m$ tensor, which is a one-to-one correspondence, and we set the collection of index shuffling operator on $[m]$ to be 
$\mathfrak{S}_m$, which is the permutation group of $m$ letters.  For simplicity, we use the following handy notation: for $\sigma \in \mathfrak{S}_m$, 
\begin{align*}
&\alpha_{\sigma(*)} =\alpha_{\sigma(1)}\alpha_{\sigma(2)}\cdots\alpha_{\sigma(m)}, \quad 
\alpha_{\sigma(*)0} =\alpha_{\sigma(1)0}\alpha_{\sigma(2)0}\cdots\alpha_{\sigma(m)0},  \\
& \alpha_{\sigma(*)1} =\alpha_{\sigma(1)1}\alpha_{\sigma(2)1}\cdots\alpha_{\sigma(m)1}.
\end{align*} 
The induced transform on a rank $m$-tensor $T$ can be defined as follows:
\[
[T^\sigma]_{\alpha_{\sigma(*)}}:=[T]_{\alpha_*},\quad\forall~\alpha_*=(\alpha_1, \alpha_2,\cdots, \alpha_m)\in \prod_{i=1}^m[d_i].
\]
We can easily check that $T^\sigma\in \bbc^{d_{\sigma(1)}\times d_{\sigma(2)}\times\cdots\times d_{\sigma(m)}}$, i.e.,
\[
\alpha_{\sigma(i)}\in [d_{\sigma(i)}],\quad i\in [m].
\]
\subsubsection{Size vector and rank} Since $\sigma$ is an index shuffling operator, the rank of the tensor is not changed.  However, it changes the size vector of the tensor $\mathbf{d}$ as follows:
\[
\mathbf{d}^\sigma=(d_{\sigma(1)}, d_{\sigma(2)},\cdots, d_{\sigma(m)}).
\]
Here, $\mathbf{d}^\sigma$ can be interpreted as an induced transform of $\mathbf{d}$ by the index shuffling operator $\sigma$.

\subsubsection{Natural frequency tensor} 
Consider a rank-$2m$ natural frequency tensor $A$ satisfying the block skew-Hermitian property:
\[
[A]_{\alpha_{*0}\alpha_{*1}}=-[\bar{A}]_{\alpha_{*1}\alpha_{*0}}.
\]
The induced transform on this rank $2m$-tensor $A$ can be defined as follows:
\[
[A^\sigma]_{\alpha_{\sigma(*)0}\alpha_{\sigma(*)1}}:=[A]_{\alpha_{*0}\alpha_{*1}}.
\]
Then, it is easy to check that $A^\sigma$ satisfies
\[
[A^\sigma]_{\alpha_{\sigma(*)0}\alpha_{\sigma(*)1}}=[A]_{\alpha_{*0}\alpha_{*1}}=-[\bar{A}]_{\alpha_{*1}\alpha_{*0}}=-[\bar{A}^\sigma]_{\alpha_{\sigma(*)1}\alpha_{\sigma(*)0}},
\]
and we conclude that $A^\sigma$ is also a rank-$2m$ natural frequency tensor of size $(d_{\sigma(1)}\times d_{\sigma(2)}\times\cdots\times d_{\sigma(m)})\times(d_{\sigma(1)}\times d_{\sigma(2)}\times\cdots\times d_{\sigma(m)})$. 

\subsubsection{Coupling strength tensor} 
Finally, we introduce the induced transform of $\mathfrak{K}$ with respect to the index shuffling operator $\sigma$ as follows: 
\[
[\mathfrak{K}^\sigma]_{i_{\sigma(*)}}:=[\mathfrak{K}]_{i_*}\quad i_*=(i_1, i_2,\cdots, i_m)\in\{0, 1\}^m,
\]
where $i_{\sigma(*)}=i_{\sigma(1)}i_{\sigma(2)}\cdots i_{\sigma(m)}$.

\vspace{0.5cm}

Below, we provide a brief summary of the induced transform of  four components for the characteristic symbol by the index shuffling operator $\sigma$: 
\begin{itemize}
\item Size vector:
\[
\mathbf{d}^\sigma=(d_{\sigma(1)}, d_{\sigma(2)},\cdots, d_{\sigma(m)}).
\]
\item Rank-$m$ tensors:
\[
[T^\sigma]_{\alpha_{\sigma(*)}}=[T]_{\alpha_*}.
\]
\item Rank-$2m$ block skew-Hermitian tensors:
\[
[A^\sigma]_{\alpha_{\sigma(*)0}\alpha_{\sigma(*)1}}=[A]_{\alpha_{*0}\alpha_{*1}}.
\]

\item Coupling strength tensor:
\[
[\mathfrak{K}^\sigma]_{i_{\sigma(*)}}=[\mathfrak{K}]_{i_*}.
\]
\end{itemize}

Since the characteristic symbol of the LT model $\mathfrak{C}=(\mathbf{d}, \mathfrak{K}, \{A_j\}, \{T_j^0\})$ is determined by the four components, we can also define the induced transform of the characteristic symbol with respect to the index shuffling operator $\sigma$ as follows:
\[
\mathfrak{C}^\sigma:=(\mathbf{d}^\sigma, \mathfrak{K}^\sigma, \{A_j^\sigma\}, \{(T_j^0)^\sigma\}).
\]
Now, we can find the Cauchy problem corresponding to the induced characteristic symbol $\mathfrak{C}^\sigma$. 

\begin{lemma}
If $\{T_j\}$ is a solution to $\mathfrak{C}$, then $\{T_j^\sigma\}$ is a solution to $\mathfrak{C}^\sigma$ for any index shuffling operator $\sigma$.
\end{lemma}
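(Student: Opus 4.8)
The plan is to show that the Lohe-tensor structure is invariant under the simultaneous relabeling of tensor slots induced by $\sigma$, so that verifying the claim reduces to a termwise re-indexing of the governing equation. Throughout I keep $j\in[N]$ fixed and write $m=\mathrm{dim}(\mathbf{d})$. First I would differentiate the defining relation $[T_j^\sigma]_{\alpha_{\sigma(*)}}=[T_j]_{\alpha_*}$ in time; since $\sigma$ only permutes slot labels and does not depend on $t$, this gives $[\dot T_j^\sigma]_{\alpha_{\sigma(*)0}}=[\dot T_j]_{\alpha_{*0}}$ for every multi-index $\alpha_{*0}$. Because $\{T_j\}$ solves the LT model for $\mathfrak{C}$, I may substitute the right-hand side of \eqref{C-1} for $[\dot T_j]_{\alpha_{*0}}$ and then rewrite each resulting term through the induced transforms $A_j^\sigma$, $\mathfrak{K}^\sigma$ and $T_j^\sigma$.

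For the free flow term I would use $[A_j^\sigma]_{\alpha_{\sigma(*)0}\alpha_{\sigma(*)1}}=[A_j]_{\alpha_{*0}\alpha_{*1}}$ together with $[T_j^\sigma]_{\alpha_{\sigma(*)1}}=[T_j]_{\alpha_{*1}}$. Since $\sigma$ is a bijection on $[m]$, the Einstein contraction over the repeated slot index $\alpha_{*1}$ runs over exactly the same value set as the contraction over $\alpha_{\sigma(*)1}$, so
\[
[A_j]_{\alpha_{*0}\alpha_{*1}}[T_j]_{\alpha_{*1}}=[A_j^\sigma]_{\alpha_{\sigma(*)0}\alpha_{\sigma(*)1}}[T_j^\sigma]_{\alpha_{\sigma(*)1}},
\]
which is precisely the free flow of the $\mathfrak{C}^\sigma$-system at the slot $\alpha_{\sigma(*)0}$. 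The block skew-Hermitian property of $A_j^\sigma$ needed for admissibility was already checked above.

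For the coupling term, the key bookkeeping is that applying $\sigma$ to all slots sends $\alpha_{*i_*}\mapsto\alpha_{\sigma(*)i_{\sigma(*)}}$, $\alpha_{*1}\mapsto\alpha_{\sigma(*)1}$ and $\alpha_{*(1-i_*)}\mapsto\alpha_{\sigma(*)(1-i_{\sigma(*)})}$, where I use $1-i_{\sigma(*)}=(1-i_*)_{\sigma(*)}$. Writing $j_*:=i_{\sigma(*)}$, the relation $[\mathfrak{K}^\sigma]_{i_{\sigma(*)}}=\kappa_{i_*}$ converts the coupling coefficient, and because $i_*\mapsto i_{\sigma(*)}$ is a bijection of $\{0,1\}^m$, re-indexing the outer sum from $i_*$ to $j_*$ turns the whole coupling sum of the $\mathfrak{C}$-system into that of the $\mathfrak{C}^\sigma$-system. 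I would then assemble the transformed free-flow and coupling terms to obtain exactly the LT equation for $\{T_j^\sigma\}$ with data $(\mathbf{d}^\sigma,\mathfrak{K}^\sigma,\{A_j^\sigma\},\{(T_j^0)^\sigma\})$. Finally, at $t=0$ the relation $[T_j^\sigma(0)]_{\alpha_{\sigma(*)}}=[T_j^0]_{\alpha_*}=[(T_j^0)^\sigma]_{\alpha_{\sigma(*)}}$ matches the prescribed initial configuration, and since a slot permutation is a Frobenius isometry, $\|(T_j^0)^\sigma\|_\tF=\|T_j^0\|_\tF=1$, so admissibility is preserved.

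I expect the only real obstacle to be the index accounting in the coupling term, specifically verifying $1-i_{\sigma(*)}=(1-i_*)_{\sigma(*)}$ and that the reindexing $i_*\mapsto i_{\sigma(*)}$ is a bijection of $\{0,1\}^m$, while every other step is an immediate consequence of the definitions of the induced transforms and the bijectivity of $\sigma$.
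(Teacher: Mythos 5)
Your proposal is correct and follows essentially the same route as the paper's proof: differentiate the defining relation $[T_j^\sigma]_{\alpha_{\sigma(*)}}=[T_j]_{\alpha_*}$, substitute the LT equation, and convert the free-flow and coupling terms via the induced transforms of $A_j$, $\mathfrak{K}$ and $T_j$, re-indexing the sum over $i_*\in\{0,1\}^m$ by the bijection $i_*\mapsto i_{\sigma(*)}$. The small extra checks you flag (that $1-i_{\sigma(*)}=(1-i_*)_{\sigma(*)}$ and that a slot permutation preserves the Frobenius norm of the initial data) are correct and are implicitly used in the paper as well.
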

\begin{proof}
First, we fix the index shuffling operator $\sigma \in \mathfrak{S}_m$ and the characteristic symbol: 
\[
\mathfrak{C}=(\mathbf{d}, \mathfrak{K}, \{A_j\}, \{T_j^0\}).
\]
Then, $\{T_j\}$ is a solution to the Cauchy problem for $\mathfrak{C}$ which reads as 
\begin{equation} \label{G-1}
\begin{cases}
[\dot{T}_j]_{\alpha_{*0}}=[A_j]_{\alpha_{*0}\alpha_{*1}}[T_j]_{\alpha_{*1}}\\
+\displaystyle\sum_{i_*\in\{0, 1\}^m}\kappa_{i_*}\left([T_c]_{\alpha_{*i_*}}[\bar{T}_j]_{\alpha_{*1}}[T_j]_{\alpha_{*(1-i_*)}}-[T_j]_{\alpha_{*i_*}}[\bar{T}_c]_{\alpha_{*1}}[T_j]_{\alpha_{*(1-i_*)}}\right),~t > 0, \vspace{0.2cm}\\
T_j(0)=T_j^0\in \bbc^{d_1\times d_2\times\cdots \times d_m},
\end{cases}
\end{equation} 
where $T_c :=\frac{1}{N}\sum_{k=1}^NT_k$. In what follows, we will show that $\{T_j^\sigma\}$ is a solution to the system defined by a characteristic $\mathfrak{C}^\sigma$. \newline 

\noindent $\bullet$ (Initial data): it is easy to check that 
\[
(T_j^\sigma)^0=T_j^\sigma(0)=(T_j^0)^\sigma.
\]

\noindent $\bullet$ (Dynamics):  it directly follows from the dynamics \eqref{G-1} of $T_j$ that 
\begin{align}
\begin{aligned}  \label{G-2}
  [\dot T_j^\sigma]_{\alpha_{\sigma(*)0}}&=[\dot{T}_j]_{\alpha_{*0}}\\
&=[A_j]_{\alpha_{*0}\alpha_{*1}}[T_j]_{\alpha_{*1}}\\
&\hspace{0.5cm}+\displaystyle\sum_{i_*\in\{0, 1\}^m}\kappa_{i_*}\left([T_c]_{\alpha_{*i_*}}[\bar{T}_j]_{\alpha_{*1}}[T_j]_{\alpha_{*(1-i_*)}}-[T_j]_{\alpha_{*i_*}}[\bar{T}_c]_{\alpha_{*1}}[T_j]_{\alpha_{*(1-i_*)}}\right).
\end{aligned}
\end{align} 

$\diamond$ (Free flow): the free flow can be represented as
\begin{equation} \label{G-3}
[A_j]_{\alpha_{*0}\alpha_{*1}}[T_j]_{\alpha_{*1}}=[A_j^\sigma]_{\alpha_{\sigma(*)0}\alpha_{\sigma(*)1}}[T^\sigma_j]_{\alpha_{\sigma(*)1}}.
\end{equation}

$\diamond$ (Coupling flow):  For handy notation, we write 
\begin{align*}
\alpha_{\sigma(*)i_{\sigma(*)}}&=\alpha_{\sigma(1)i_{\sigma(1)}}\alpha_{\sigma(2)i_{\sigma(2)}}\cdots\alpha_{\sigma(m)i_{\sigma(m)}},\\
\alpha_{\sigma(*)(1-i_{\sigma(*)})}&=\alpha_{\sigma(1)(1-i_{\sigma(1)})}\alpha_{\sigma(2)(1-i_{\sigma(2)})}\cdots\alpha_{\sigma(m)(1-i_{\sigma(m)})}.
\end{align*}
Then, the coupling term in the right-hand side of \eqref{G-2} can be represented as 
\begin{align} \label{G-4}
\begin{aligned}
&\sum_{i_*\in\{0, 1\}^m}\kappa_{i_*}\left([T_c]_{\alpha_{*i_*}}[\bar{T}_j]_{\alpha_{*1}}[T_j]_{\alpha_{*(1-i_*)}}-[T_j]_{\alpha_{*i_*}}[\bar{T}_c]_{\alpha_{*1}}[T_j]_{\alpha_{*(1-i_*)}}\right)\\
&\hspace{1cm} =\sum_{i_{\sigma(*)}\in\{0, 1\}^m}\kappa^\sigma_{i_{\sigma(*)}}\Big([T^\sigma_c]_{\alpha_{\sigma(*)i_{\sigma(*)}}}[\bar{T}_j^\sigma]_{\alpha_{\sigma(*)1}}[T_j^\sigma]_{\alpha_{\sigma(*)(1-i_{\sigma(*)})}} \\
&\hspace{4cm}-[T_j^\sigma]_{\alpha_{\sigma(*)i_{\sigma(*)}}}[\bar{T}_c^\sigma]_{\alpha_{\sigma(*)1}}[T_j^\sigma]_{\alpha_{\sigma(*)(1-i_{\sigma(*)})}}\Big).
\end{aligned}
\end{align}
In \eqref{G-2}, we combine \eqref{G-3} and \eqref{G-4} to find dynamical system for $T_j^{\sigma}$:
\[
\begin{cases}
\frac{d}{dt} [T_j^\sigma]_{\alpha_{\sigma(*)0}}=[A_j^\sigma]_{\alpha_{\sigma(*)0}\alpha_{\sigma(*)1}}[T^\sigma_j]_{\alpha_{\sigma(*)1}}\\
\displaystyle \hspace{2cm} +\sum_{i_{\sigma(*)}\in\{0, 1\}^m}\kappa^\sigma_{i_{\sigma(*)}}\Big([T^\sigma_c]_{\alpha_{\sigma(*)i_{\sigma(*)}}}[\bar{T}_j^\sigma]_{\alpha_{\sigma(*)1}}[T_j^\sigma]_{\alpha_{\sigma(*)(1-i_{\sigma(*)})}} \\
\displaystyle\hspace{4cm}-[T_j^\sigma]_{\alpha_{\sigma(*)i_{\sigma(*)}}}[\bar{T}_c^\sigma]_{\alpha_{\sigma(*)1}}[T_j^\sigma]_{\alpha_{\sigma(*)(1-i_{\sigma(*)})}}\Big),\quad t > 0, \\
T_j^\sigma(0)=(T_j^0)^\sigma.
\end{cases}
\]
Finally, we can conclude that $\{T_j^\sigma\}$ is a solution to the Cauchy problem whose  characteristic symbol is $\mathfrak{C}^\sigma$.
\end{proof}

\begin{corollary}
Let $\{T_j\}$ be a solution to the Cauchy problem corresponding to the characteristic symbol $\mathfrak{C}$. Suppose that  $\mathfrak{C}$ has the following symmetry:
\[
\mathfrak{C}=\mathfrak{C}^\sigma
\]
for some permutation $\sigma\in\mathfrak{S}_m$. Then, the solution $\{T_j\}$ also has the same type of symmetry:
\[
T_j(t)=T_j^\sigma(t),\quad t\geq0,\quad j\in[N].
\]
\end{corollary}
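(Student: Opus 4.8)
The plan is to reduce the statement to the uniqueness of solutions for the Cauchy problem of the LT model, using the preceding Lemma as the one nontrivial input. First I would unpack what the hypothesis $\mathfrak{C}=\mathfrak{C}^\sigma$ means at the level of the four components. Writing $\mathfrak{C}=(\mathbf{d},\mathfrak{K},\{A_j\},\{T_j^0\})$ and $\mathfrak{C}^\sigma=(\mathbf{d}^\sigma,\mathfrak{K}^\sigma,\{A_j^\sigma\},\{(T_j^0)^\sigma\})$, the equality $\mathfrak{C}=\mathfrak{C}^\sigma$ is exactly the four componentwise identities
\[
\mathbf{d}=\mathbf{d}^\sigma,\quad \mathfrak{K}=\mathfrak{K}^\sigma,\quad A_j=A_j^\sigma,\quad T_j^0=(T_j^0)^\sigma,\qquad j\in[N].
\]
In particular, the Cauchy problem attached to $\mathfrak{C}^\sigma$ is literally the same Cauchy problem \eqref{G-1} as the one attached to $\mathfrak{C}$.

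Next I would apply the preceding Lemma to the solution $\{T_j\}$ of $\mathfrak{C}$: it yields that $\{T_j^\sigma\}$ is a solution to the Cauchy problem associated with $\mathfrak{C}^\sigma$. Combining this with the identification above, $\{T_j^\sigma\}$ is a solution to the Cauchy problem for $\mathfrak{C}$ itself. At this point both $\{T_j\}$ and $\{T_j^\sigma\}$ solve one and the same system \eqref{G-1}, and their initial data agree: by the Lemma's initial-data computation $T_j^\sigma(0)=(T_j^0)^\sigma$, which equals $T_j^0=T_j(0)$ by the last of the componentwise identities.

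Finally I would invoke uniqueness of solutions to close the argument. The vector field on the right-hand side of \eqref{G-1} is a fixed cubic polynomial in the entries of the $T_k$ and their complex conjugates, hence smooth and locally Lipschitz; together with the conserved Frobenius norm $\|T_j(t)\|_\tF=\|T_j^0\|_\tF$, which confines each trajectory to a compact sphere, this yields global existence and uniqueness of the solution to \eqref{G-1} for the common initial datum $\{T_j^0\}$. Therefore the two solutions coincide,
\[
T_j(t)=T_j^\sigma(t),\qquad t\ge0,\quad j\in[N],
\]
which is the asserted symmetry. The only step requiring genuine care is this uniqueness step: one must confirm that the LT model is well-posed, so that ``two solutions sharing the same characteristic symbol and the same initial data are equal'' is legitimate. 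This is precisely where conservation of $\|T_j\|_\tF$ does the work, while the rest of the argument is bookkeeping with the definitions of the induced transforms $\mathbf{d}^\sigma$, $\mathfrak{K}^\sigma$, $A_j^\sigma$ and $(T_j^0)^\sigma$.
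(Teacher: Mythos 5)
Your proposal is correct and follows exactly the route the paper intends: the corollary is stated without proof precisely because it is the preceding Lemma combined with uniqueness of solutions to the LT Cauchy problem, which is what you supply (including the justification of well-posedness via the locally Lipschitz cubic vector field and the conserved Frobenius norm). No gaps.
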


\vspace{0.5cm}

Since $\{T_j\}$ and $\{T_j^\sigma\}$ are just rearrangement of each other, two characteristic symbols $\mathfrak{C}$ and $\mathfrak{C}^\sigma$ are essentially identical. To fix the idea,  we define an equivalent relation $\sim$ on $\mathfrak{D}$ as follows:
\[
\mathfrak{C}\sim \mathfrak{C}^\sigma,\quad \mathfrak{C}\in\mathfrak{D}_m,\quad \sigma\in \mathfrak{S}_m,\quad m\in \bbn\cup\{0\}.
\]
 Moreover,  the equivalence class is defined as 
\[
[\mathfrak{C}]_\sim :=\{\mathfrak{C}^\sigma:\sigma\in \mathfrak{S}_m\}.
\]
Now, we define the binary operation, denoted by $\star_\sim$, between two equivalence classes. In fact, such binary operation can be naturally defined from the fusion operation: for $\mathfrak{C}_1 \in \mathfrak{D}_{m_1}$ and $\mathfrak{C}_2 \in \mathfrak{D}_{m_2}$, 
\[
[\mathfrak{C}_1]\star_\sim[\mathfrak{C}_2]:=[\mathfrak{C}_1\star\mathfrak{C}_2].
\]
In the following lemma, we show that the binary operation $\star_\sim$ is well-defined.

\begin{lemma}\label{L3.2}
The binary operation $\star_\sim$ is well-defined.
\end{lemma}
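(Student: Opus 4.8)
The plan is to verify that the value $[\mathfrak{C}_1\star\mathfrak{C}_2]$ does not depend on the chosen representatives of $[\mathfrak{C}_1]$ and $[\mathfrak{C}_2]$. Since $[\mathfrak{C}_1]=[\tilde{\mathfrak{C}}_1]$ means precisely that $\tilde{\mathfrak{C}}_1=\mathfrak{C}_1^{\sigma_1}$ for some $\sigma_1\in\mathfrak{S}_{m_1}$, and likewise $\tilde{\mathfrak{C}}_2=\mathfrak{C}_2^{\sigma_2}$ for some $\sigma_2\in\mathfrak{S}_{m_2}$, it suffices to show $\mathfrak{C}_1^{\sigma_1}\star\mathfrak{C}_2^{\sigma_2}\sim\mathfrak{C}_1\star\mathfrak{C}_2$. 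The central construction is the block permutation $\sigma=\sigma_1\oplus\sigma_2\in\mathfrak{S}_{m_1+m_2}$ defined by
\[
\sigma(i)=\begin{cases} \sigma_1(i), & 1\le i\le m_1,\\ m_1+\sigma_2(i-m_1), & m_1<i\le m_1+m_2,\end{cases}
\]
which acts by $\sigma_1$ on the first $m_1$ slots and by $\sigma_2$ on the last $m_2$ slots, never mixing the two blocks. I would then prove the single identity
\[
(\mathfrak{C}_1\star\mathfrak{C}_2)^\sigma=\mathfrak{C}_1^{\sigma_1}\star\mathfrak{C}_2^{\sigma_2},
\]
from which $\mathfrak{C}_1^{\sigma_1}\star\mathfrak{C}_2^{\sigma_2}\sim\mathfrak{C}_1\star\mathfrak{C}_2$ follows at once by the definition of $\sim$, and hence the desired equality of classes holds.

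The identity is checked componentwise against the four entries of a characteristic symbol. For the size vector, splitting the index $i$ according to whether $\sigma(i)$ lands in the first or second block turns $(\mathbf{d}^1\star_s\mathbf{d}^2)^\sigma$ into $(\mathbf{d}^1)^{\sigma_1}\star_s(\mathbf{d}^2)^{\sigma_2}$, and the identical bookkeeping shows $((T_j^1)^0\otimes(T_j^2)^0)^\sigma=((T_j^1)^0)^{\sigma_1}\otimes((T_j^2)^0)^{\sigma_2}$; both are routine because $\star_s$ and $\otimes$ merely juxtapose the two factors. For the natural frequency tensor, I would start from the defining formula for $\star_\tF$ and use that the induced transform respects the block structure, the only point needing care being that the Kronecker delta on index tuples is permutation-invariant, i.e.\ $\delta_{\gamma_{\sigma_2(*)0}\gamma_{\sigma_2(*)1}}=\delta_{\gamma_{*0}\gamma_{*1}}$, since two tuples agree if and only if their $\sigma_2$-shuffles agree.

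The step demanding the most care, and the genuine obstacle, is the coupling strength tensor, because $\star_c$ is defined by a case distinction on whether an index block equals the all-ones vector $\mathbf{1}$. Writing $i_*=(j_*,k_*)$ with $j_*\in\{0,1\}^{m_1}$ and $k_*\in\{0,1\}^{m_2}$, the block permutation acts as $i_{\sigma(*)}=(j_{\sigma_1(*)},k_{\sigma_2(*)})$, and the crucial observation is that a binary tuple is all-ones if and only if any permutation of it is all-ones, so that $\delta_{k_{\sigma_2(*)},\mathbf{1}_{m_2}}=\delta_{k_*,\mathbf{1}_{m_2}}$ and $\delta_{j_{\sigma_1(*)},\mathbf{1}_{m_1}}=\delta_{j_*,\mathbf{1}_{m_1}}$. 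Combining this permutation-invariance of the defining condition with the induced-transform identities $[(\mathfrak{K}^1)^{\sigma_1}]_{j_{\sigma_1(*)}}=\kappa^1_{j_*}$ and $[(\mathfrak{K}^2)^{\sigma_2}]_{k_{\sigma_2(*)}}=\kappa^2_{k_*}$, the compact formula $(\mathfrak{K}^1\star_c\mathfrak{K}^2)_{(j_*,k_*)}=\kappa^1_{j_*}\delta_{k_*,\mathbf{1}_{m_2}}+\kappa^2_{k_*}\delta_{j_*,\mathbf{1}_{m_1}}$ transforms termwise into $((\mathfrak{K}^1)^{\sigma_1}\star_c(\mathfrak{K}^2)^{\sigma_2})_{(j_{\sigma_1(*)},k_{\sigma_2(*)})}$, which establishes the coupling-strength component of the identity and completes the proof.
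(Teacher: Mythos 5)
Your proposal is correct and follows essentially the same route as the paper: construct the block permutation $\sigma$ from $\sigma_1$ and $\sigma_2$, prove $(\mathfrak{C}_1\star\mathfrak{C}_2)^\sigma=\mathfrak{C}_1^{\sigma_1}\star\mathfrak{C}_2^{\sigma_2}$ componentwise, and note that the all-ones condition in the definition of $\star_c$ is permutation-invariant. (Your formula $\sigma(i)=m_1+\sigma_2(i-m_1)$ on the second block is in fact the correct one; the paper's displayed $m_2+\sigma_2(i-m_1)$ is a typo.)
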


\begin{proof}
Let  $\mathfrak{C}_1,\mathfrak{C}_2$ be two characteristic symbols whose ranks are $m_1$ and $m_2$, respectively. In order to establish the well-definedness of $\star_\sim$, it suffices to show that 
\begin{equation} \label{G-10}
\mathfrak{C}_1\star\mathfrak{C}_2\sim\mathfrak{C}_1^{\sigma_1}\star\mathfrak{C}_2^{\sigma_2},\quad \textup{for all $\sigma_1\in \mathfrak{S}_{m_1}$ and $\sigma_2\in \mathfrak{S}_{m_2}$.}
\end{equation}
To verify the relation \eqref{G-10}, we need to construct an index shuffling operator (or a permutation) on $m_1+m_2$ letters such that
\begin{equation} \label{G-11}
(\mathfrak{C}_1\star\mathfrak{C}_2)^\sigma=\mathfrak{C}_1^{\sigma_1}\star\mathfrak{C}_2^{\sigma_2}.
\end{equation}
In this regard, we define $\sigma :=\sigma_1\otimes \sigma_2\in \mathfrak{S}_{m_1+m_2}$ as
\begin{equation} \label{G-12}
\sigma(i)=\begin{cases}
\sigma_1(i)\quad&\text{for }1\leq i\leq m_1,\\
 m_2+\sigma_2(i-m_1)\quad&\text{for }m_1+1\leq i\leq m_1+m_2.
\end{cases}
\end{equation}
The first $m_1$ components of  $\sigma$ come from $\sigma_1$ and the last $m_2$ terms come from $\sigma_2$. For $\sigma$ defined in \eqref{G-12}, we verify the relation \eqref{G-11} by considering four components of the characteristic symbols one by one. \newline

\noindent (i) (Size vector): by straightforward calculation, one has 
\begin{align*}
(\mathbf{d}^1\star_s \mathbf{d}^2)^\sigma&=(d^1_{\sigma_1(1)}, \cdots, d^1_{\sigma_1(m_1)}, d^2_{\sigma_2(1)}, \cdots, d^2_{\sigma_2(m_2)})
=(\mathbf{d}^1)^{\sigma_1}\star_s(\mathbf{d}^2)^{\sigma_2}.
\end{align*}

\vspace{0.5cm}

\noindent (ii) (Natural frequency tensors): again, we observe  
\begin{align*}
&[(A^1_j\star_\tF  A_j^2)^\sigma]_{\beta_{\sigma_1(*)0}\gamma_{\sigma_2(*)0}\beta_{\sigma_1(*)1}\gamma_{\sigma_2(*)1}} \\
&\hspace{0.5cm}=[A^1_j\star_\tF  A_j^2]_{\beta_{*0}\gamma_{*0}\beta_{*1}\gamma_{*1}} =[A_j^1]_{\beta_{*0}\beta_{*1}}\delta_{\gamma_{*0}\gamma_{*1}}+\delta_{\beta_{*0}\beta_{*1}}[A_j^2]_{\gamma_{*0}\gamma_{*1}}\\
&\hspace{0.5cm}=[(A_j^1)^{\sigma_1}]_{\beta_{\sigma_1(*)0}\beta_{\sigma_1(*)1}}\delta_{\gamma_{\sigma_2(*)0}\gamma_{\sigma_2(*)1}}+\delta_{\beta_{\sigma_1(*)0}\beta_{\sigma_1(*)1}}[(A_j^2)^{\sigma_2}]_{\gamma_{\sigma_2(*)0}\gamma_{\sigma_2(*)1}}\\
&\hspace{0.5cm}=[(A_j^1)^{\sigma_1}\star_\tF (A_j^2)^{\sigma_2}]_{\beta_{\sigma_1(*)0}\gamma_{\sigma_2(*)0}\beta_{\sigma_1(*)1}\gamma_{\sigma_2(*)1}}.
\end{align*}
Hence, we see
\[
 (A_j^1\star_\tF  A_j^2)^\sigma=(A_j^1)^{\sigma_1}\star_\tF (A_j^2)^{\sigma_2}. 
\]

\vspace{0.5cm}

\noindent (iii) (Coupling strength tensors): it suffices to show the desired relation componentwise: 
\begin{equation} \label{G-15}
((\mathfrak{K}^1\star_c\mathfrak{K}^2)^\sigma)_{(j_{\sigma_1(*)}, k_{\sigma_2(*)})}=(\mathfrak{K}^1\star_c\mathfrak{K}^2)_{(j_*, k_*)}.
\end{equation}
By the definition of $\star_c$, we observe 
\begin{align*}
((\mathfrak{K}^1\star_c\mathfrak{K}^2)^\sigma)_{(j_{\sigma_1(*)}, k_{\sigma_2(*)})} &:=\begin{cases}
\kappa_{j_*}^1\quad& \text{if }j_*\neq \mathbf{1}_{m_1}\text{~and~}k_*= \mathbf{1}_{m_2},\\
\kappa_{k_*}^2\quad& \text{if }j_*= \mathbf{1}_{m_1}\text{~and~}k_*\neq \mathbf{1}_{m_2},\\
\kappa_{\mathbf{1}_{m_1}}^1+\kappa_{\mathbf{1}_{m_2}}^2\quad&\text{if }j_*= \mathbf{1}_{m_1}\text{~and~}k_*= \mathbf{1}_{m_2},\\
0\quad&\text{if }j_*\neq \mathbf{1}_{m_1}\text{~and~}k_*\neq \mathbf{1}_{m_2} 
\end{cases} \\
&=\begin{cases}
(\kappa^1)^{\sigma_1}_{j_{\sigma_1(*)}}\quad& \text{if }j_*\neq \mathbf{1}_{m_1}\text{~and~}k_*= \mathbf{1}_{m_2},\\
(\kappa^2)^{\sigma_2}_{k_{\sigma_2(*)}}\quad& \text{if }j_*= \mathbf{1}_{m_1}\text{~and~}k_*\neq \mathbf{1}_{m_2},\\
(\kappa^1)^{\sigma_1}_{\mathbf{1}_{m_1}}+(\kappa^2)^{\sigma_2}_{\mathbf{1}_{m_2}}\quad&\text{if }j_*= \mathbf{1}_{m_1}\text{~and~}k_*= \mathbf{1}_{m_2},\\
0\quad&\text{if }j_*\neq \mathbf{1}_{m_1}\text{~and~}k_*\neq \mathbf{1}_{m_2},
\end{cases}\\
&=((\mathfrak{K}^1)^{\sigma_1}\star_c(\mathfrak{K}^2)^{\sigma_2})_{(j_{\sigma_1(*)}, k_{\sigma_2(*)})}.
\end{align*}
Hence, we verify  \eqref{G-15}. 

\vspace{0.5cm}

\noindent (iv) (Initial configuration): By direct observation, one has 
\begin{align*}
[((T_j^1)^0\star_i (T_j^2)^0)^\sigma]_{\beta_{\sigma_1(*)}\gamma_{\sigma_2(*)}}&=[(T_j^1)^0\star_i(T_j^2)^0]_{\beta_*\gamma_*}=[(T_j^1)^0]_{\beta_*}[(T_j^2)^0]_{\gamma_*}\\
&=[(T_j^1)^0]_{\beta_*}[(T_j^2)^0]_{\gamma_*}=[((T_j^1)^0)^{\sigma_1}]_{\beta_{\sigma_1(*)}}[((T_j^2)^0)^{\sigma_2}]_{\gamma_{\sigma_2(*)}}\\
&=[((T_j^1)^0)^{\sigma_1}\star_i((T_j^2)^0)^{\sigma_2}]_{\beta_{\sigma_1(*)}\gamma_{\sigma_2(*)}}.
\end{align*}
This yields  the desired relation:
\[
((T_j^1)^0\star_i (T_j^2)^0)^\sigma=((T_j^1)^0)^{\sigma_1}\star_i((T_j^2)^0)^{\sigma_2}.
\]
By combining (i), (ii), (iii) and (iv), we obtain the desired relation \eqref{G-11} which gives \eqref{G-10}.
\end{proof}

Since the operator $\star_\sim$ is well-defined, we know that the quotient object $(\mathfrak{D}/\sim, \star_\sim)$  has also a monoid structure inherited from $(\mathfrak{D}, \star)$. Indeed, we show that the quotient object $(\mathfrak{D}/\sim, \star_\sim)$ is a commutative monoid. 

\begin{theorem}
$(\mathfrak{D}/_\sim, \star_\sim)$ is a commutative monoid. 
\end{theorem}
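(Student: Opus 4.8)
The plan is to establish the three defining properties of a commutative monoid for $(\mathfrak{D}/_\sim, \star_\sim)$: associativity, existence of an identity, and commutativity. Since Lemma \ref{L3.2} already guarantees that $\star_\sim$ is well-defined on equivalence classes, the first two properties can be inherited almost for free from the parent structure $(\mathfrak{D}, \star)$. For associativity, I would simply lift the identity $[\mathfrak{C}_1]\star_\sim([\mathfrak{C}_2]\star_\sim[\mathfrak{C}_3]) = [\mathfrak{C}_1\star(\mathfrak{C}_2\star\mathfrak{C}_3)]$ via the definition of $\star_\sim$, apply Proposition \ref{P5.1} (associativity of $\star$ on $\mathfrak{D}$) inside the brackets, and descend back to the quotient. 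For the identity, I would show that $[\mathfrak{C}_e]$, with $\mathfrak{C}_e = (\emptyset, 0, \{0\}, \{1\})$ from Proposition \ref{P5.3}, serves as the two-sided identity, since $[\mathfrak{C}_e]\star_\sim[\mathfrak{C}] = [\mathfrak{C}_e\star\mathfrak{C}] = [\mathfrak{C}]$, and symmetrically on the right.

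The genuinely new content is commutativity, so that is where I would spend the effort. The goal is to prove $[\mathfrak{C}_1]\star_\sim[\mathfrak{C}_2] = [\mathfrak{C}_2]\star_\sim[\mathfrak{C}_1]$, i.e. $\mathfrak{C}_1\star\mathfrak{C}_2 \sim \mathfrak{C}_2\star\mathfrak{C}_1$. Note that $\star$ itself is \emph{not} commutative on $\mathfrak{D}$ — fusing in the opposite order juxtaposes the size vectors, tensor factors, and index blocks in swapped positions — but the two results should differ only by a relabeling of indices, which is exactly what the equivalence relation $\sim$ absorbs. Concretely, for $\mathfrak{C}_1 \in \mathfrak{D}_{m_1}$ and $\mathfrak{C}_2 \in \mathfrak{D}_{m_2}$, I would exhibit the block-transposition permutation $\tau \in \mathfrak{S}_{m_1+m_2}$ that sends the last $m_2$ indices to the front and the first $m_1$ indices to the back, namely
\[
\tau(i) = \begin{cases} m_2 + i & \text{for } 1\leq i \leq m_1, \\ i - m_1 & \text{for } m_1 + 1 \leq i \leq m_1 + m_2, \end{cases}
\]
and claim that $(\mathfrak{C}_1\star\mathfrak{C}_2)^\tau = \mathfrak{C}_2\star\mathfrak{C}_1$. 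This mirrors the construction of $\sigma_1\otimes\sigma_2$ in the proof of Lemma \ref{L3.2}, so the verification would proceed componentwise over the four ingredients of the characteristic symbol.

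I would then check each of the four components under $\tau$. For the size vector, applying $\tau$ to the juxtaposition $\mathbf{d}^1\star_s\mathbf{d}^2 = (d^1_1,\dots,d^1_{m_1},d^2_1,\dots,d^2_{m_2})$ swaps the two blocks and yields $(d^2_1,\dots,d^2_{m_2},d^1_1,\dots,d^1_{m_1}) = \mathbf{d}^2\star_s\mathbf{d}^1$. For the initial configuration, the tensor product satisfies the canonical swap isomorphism $(T_j^1\otimes T_j^2)^\tau = T_j^2\otimes T_j^1$ precisely because $\tau$ transposes the two index blocks. For the natural frequency tensors, the block-diagonal defining formula \eqref{D-18-0} is symmetric under interchanging the roles of the two summands up to relabeling, so $(A_j^1\star_\tF A_j^2)^\tau = A_j^2\star_\tF A_j^1$, using the same Kronecker-delta manipulation as in part (ii) of Lemma \ref{L3.2}. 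The one step demanding genuine care is the coupling strength tensor: I would verify componentwise, from the case definition \eqref{D-18-1}, that applying $\tau$ to $(\mathfrak{K}^1\star_c\mathfrak{K}^2)_{(j_*,k_*)}$ reproduces $(\mathfrak{K}^2\star_c\mathfrak{K}^1)_{(k_*,j_*)}$, the key observation being that the condition ``$j_* = \mathbf{1}_{m_1}$ and $k_* = \mathbf{1}_{m_2}$'' and the additive value $\kappa^1_{\mathbf{1}_{m_1}} + \kappa^2_{\mathbf{1}_{m_2}}$ are both symmetric in the two arguments, so the four cases match after the swap. Assembling the four identities gives $(\mathfrak{C}_1\star\mathfrak{C}_2)^\tau = \mathfrak{C}_2\star\mathfrak{C}_1$, hence $\mathfrak{C}_1\star\mathfrak{C}_2 \sim \mathfrak{C}_2\star\mathfrak{C}_1$, which is commutativity of $\star_\sim$. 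The main obstacle is bookkeeping: keeping the index blocks and the permutation $\tau$ consistent across all four components, particularly ensuring the coupling tensor's asymmetric-looking case structure is in fact symmetric under the block swap.
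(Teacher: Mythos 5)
Your proposal is correct and follows essentially the same route as the paper: inherit the monoid structure from $(\mathfrak{D},\star)$ via the well-definedness of $\star_\sim$ (Lemma \ref{L3.2}), then prove commutativity by exhibiting the block-transposition permutation $\sigma(i)=i+m_2$ for $1\leq i\leq m_1$ and $\sigma(i)=i-m_1$ for $m_1+1\leq i\leq m_1+m_2$, and checking $(\mathfrak{C}_1\star\mathfrak{C}_2)^\sigma=\mathfrak{C}_2\star\mathfrak{C}_1$. Your componentwise verification over the four ingredients is in fact more explicit than the paper's, which simply appeals to ``similar arguments'' as in Lemma \ref{L3.2}.
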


\begin{proof}
The fact that $(\mathfrak{D}/_\sim, \star_\sim)$ becomes a monoid directly follows from the fact that $(\mathfrak D,\star)$ is a monoid. Thus, it suffices to show that for any $\mathfrak C_1$ and $\mathfrak C_2$, 
\begin{equation} \label{G-18}
[\mathfrak{C}_1]\star_\sim[\mathfrak{C}_2]=[\mathfrak{C}_2]\star_\sim[\mathfrak{C}_1].
\end{equation}
For \eqref{G-18}, we claim that
\begin{equation} \label{G-19}
\mathfrak{C}_1\star \mathfrak{C}_2\sim \mathfrak{C}_2\star\mathfrak{C}_1.
\end{equation}
Let $m_1$ and $m_2$ be the ranks of  $\mathfrak C_1$ and $\mathfrak C_2$, respectively. Then, we define the permutation on $m_1+m_2$ letters  as follows:
\[
\sigma(i) :=\begin{cases}
i+m_2,\quad&1\leq i\leq m_1,\\
i-m_1,\quad&m_1+1\leq i\leq m_1+m_2.
\end{cases}
\]
By similar arguments in Lemma \ref{L3.2}, we can see
\[
(\mathfrak{C}_1\star\mathfrak{C}_2)^\sigma=\mathfrak{C}_2\star\mathfrak{C}_1.
\]
This gives \eqref{G-19}. Then, we have 
\[
[\mathfrak{C}_1]\star_\sim[\mathfrak{C}_2]=[\mathfrak{C}_1\star\mathfrak{C}_2]=[\mathfrak{C}_2\star\mathfrak{C}_1]=[\mathfrak{C}_2]\star_\sim[\mathfrak{C}_1].
\]
This shows that $(\mathfrak{D}/_\sim, \star_\sim)$ is commutative. 
\end{proof}

\subsection{Unique factorizability and separability} 

In Proposition \ref{P2.2} and Proposition \ref{P2.5},  we have studied the separability of a solution to the weakly coupled model. In what follows, we provide a relation between the separability of a solution and the unique factorizability of a commutative monoid. 

\begin{definition}
\begin{enumerate}
\item
Let $(M,*)$  be a semigroup. Then, an element $a\in M$ is left-cancellative (or right-cancellative) if for all $b,c\in M$, $a*b=a*c$ (or $b*a = c*a$) always implies that $b=c$. An element is said to be cancellative if it is both left- and right-cancellative. If all elements in $M$ are cancellative, then $M$ is cancellative. 
\vspace{0.2cm}
\item
Let $(M, *)$ be a commutative monoid with the identity element  denoted by $e$. If $a\in M$ has an inverse in $M$, i.e., there exists $b\in M$ such that $ab=ba=e$, then we say that $a$ is a unit of the given monoid.
\vspace{0.2cm}
\item
Let $(M,*)$ be a commutative monoid. For $a,b\in M$, we say that $a$ divides $b$ if there exists $c\in M$ such that $b=a*c$, and we write $a|b$. Then, an equivalence relation $\sim$ naturally arises, and we write $a\sim b$ if $a|b$ and $b|a$. 
\vspace{0.2cm}
\item
An element $a\in M$ is invertible if there exists $b\in M$ such that $a*b=1$.  An element $p\in M$ is a prime if it is not invertible(or it is not a unit) and 
\[
p | a*b \quad \textup{implies}\quad p|a \quad \textup{or}\quad p|b,\quad \forall a,b\in M.
\]
An element $q\in M$ is irreducible if $q$ is not invertible and 
\[
q \sim a*b \quad \textup{implies}\quad q \sim a \quad \textup{or}\quad q \sim b,\quad \forall a,b\in M.
\]
\end{enumerate}
\end{definition}
\begin{remark}
Let $\mathfrak{C}$ be a characteristic symbol of the Kuramoto model \eqref{C-3}, i.e.,
\[
\mathfrak{C}=\left(\emptyset, \frac{\kappa}{2}, \{\mathrm{i}\nu_j\}, \{e^{\mathrm{i}\theta_j^0}\}\right).
\]
If we define $\tilde{\mathfrak{C}}$ as
\[
\tilde{\mathfrak{C}}=\left(\emptyset, -\frac{\kappa}{2}, \{-\mathrm{i}\nu_j\}, \{e^{-\mathrm{i}\theta_j^0}\}\right),
\]
then we can easily check that
\[
\mathfrak{C}\star \tilde{\mathfrak{C}}=\tilde{\mathfrak{C}}\star\mathfrak{C}=\mathfrak{C}_e,
\]
and this yields
\[
[\mathfrak{C}]\star_\sim [\tilde{\mathfrak{C}}]=[\tilde{\mathfrak{C}}]\star_\sim[\mathfrak{C}]=[\mathfrak{C}_e].
\]
Finally, we can conclude that for any $\kappa$, $\{\nu_j\}$, and $\{\theta_j^0\}$, we know that $[\mathfrak{C}]$ has an inverse in $(\mathfrak{D}/\sim, \star_\sim)$. Furthermore, $[\mathfrak{C}]$ is a unit of $(\mathfrak{D}/\sim, \star_\sim)$.

\end{remark}
Note that the set of nonnegative integers under addition is a cancellative monoid and the set of positive integers under multiplication is a cancellative monoid.

\begin{lemma} \label{L5.5}
Let $(M,*)$ be a commutative cancellative monoid. If 
\[
p_1p_2\cdots p_{n_1}=q_1q_2\cdots q_{n_2},\quad \textup{where all $p_i$ and $q_j$ are primes},
\]
then $n_1=n_2=n$, and $p_i=u_iq_{\sigma(i)}$ for some permutation $\sigma\in \mathfrak{S}_n$ and unit elements $\{u_i\}_{i=1}^n$.
\end{lemma}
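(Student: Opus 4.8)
The plan is to run an induction on $n_1$, the number of prime factors on the left, after first translating both the conclusion and the prime hypothesis into a form adapted to cancellation. Since $M$ is only a cancellative \emph{monoid} and not a group, throughout I must take care to use cancellation rather than division, and every inverse I write down must be the inverse of an element already known to be a unit.

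\emph{Preliminary reductions.} First I would record the associate characterization: in a commutative cancellative monoid, $a\sim b$ holds if and only if $b=a*u$ for some unit $u$. Indeed, if $a\sim b$ then $b=a*c$ and $a=b*d$, whence $a=a*(c*d)$; cancelling $a$ gives $c*d=e$, so $c$ is a unit and $b=a*c$, while the converse is immediate. This shows that the desired conclusion $p_i=u_iq_{\sigma(i)}$ is precisely the statement $p_i\sim q_{\sigma(i)}$. Next I would show that every prime is irreducible: if $p$ is prime and $p\sim a*b$, then $p\mid a*b$ gives, say, $p\mid a$, while $a\mid a*b\mid p$ gives $a\mid p$, so $p\sim a$. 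Finally, an associate $u*q$ of a prime $q$ (with $u$ a unit) is again prime, since $u*q\mid x*y$ forces $q\mid x*y$, hence $q$ divides one of the factors and then so does $u*q$; and $u*q$ cannot be a unit, lest $q$ be one.

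\emph{Extraction and reduction.} For the inductive step, from $p_1\mid p_1*\cdots*p_{n_1}=q_1*\cdots*q_{n_2}$ and repeated application of the prime property, $p_1$ divides some $q_j$; after relabelling the $q$'s (this relabelling being absorbed into the final $\sigma$) I may assume $p_1\mid q_1$, say $q_1=p_1*c$. Since $q_1$ is prime hence irreducible and $q_1\sim p_1*c$, either $q_1\sim p_1$ or $q_1\sim c$; the latter would give $c=q_1*d=p_1*c*d$, and cancelling $c$ would make $p_1$ a unit, contradicting primality. Hence $p_1\sim q_1$, so $q_1=w^{-1}*p_1$ for a unit $w$. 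Substituting and cancelling the common factor $p_1$ yields
\[
p_2*\cdots*p_{n_1}=(w^{-1}*q_2)*q_3*\cdots*q_{n_2},
\]
an identity of $n_1-1$ primes equal to $n_2-1$ primes, using that $w^{-1}*q_2$ is prime. The induction hypothesis then forces $n_1-1=n_2-1$ and matches the remaining factors up to permutation and units; combining with $p_1\sim q_1$ and $w^{-1}*q_2\sim q_2$ completes the step, the global permutation $\sigma$ being the composite of the successive relabellings. The base case $n_1=1$ is handled identically: after the cancellation the left side is the empty product $e$, and any surviving prime on the right would divide $e$ and hence be a unit, forcing $n_2=1$.

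\emph{Main obstacle.} The genuinely delicate points are the two preliminary lemmas, because all of the ``divides,'' ``associate,'' and ``unit'' manipulations must be justified purely from cancellativity and commutativity; there is no ambient group in which to invert, so each $w^{-1}$ must refer to an already-established unit, and the distinction between \emph{prime} and \emph{irreducible} as defined must be used in exactly the right places. Once the associate characterization and the implication prime $\Rightarrow$ irreducible are in hand, the induction itself and the bookkeeping of $\sigma$ are routine.
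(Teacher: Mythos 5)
Your proof is correct, but note that the paper does not actually prove Lemma \ref{L5.5} at all: its ``proof'' is a one-line citation to Johnson's 1971 paper \cite{J} on unique factorization monoids. So you have supplied the standard self-contained Euclid-style argument that the authors outsource to the literature. Your two preliminary lemmas (the unit characterization of $\sim$ via cancellation, and prime $\Rightarrow$ irreducible in the paper's ``$q\sim a*b$'' sense) are exactly the right scaffolding, and the extraction-and-cancellation induction is carried out correctly, including the key point that $q_1\sim c$ would force $p_1$ to be a unit. The only spot worth tightening is the inductive step where you write the reduced identity as $p_2*\cdots*p_{n_1}=(w^{-1}*q_2)*q_3*\cdots*q_{n_2}$: this tacitly assumes $n_2\geq 2$ so that there is a $q_2$ into which to absorb $w^{-1}$. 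If $n_2=1$, cancellation of $p_1$ leaves $p_2*\cdots*p_{n_1}=w^{-1}$, a unit, so $p_2$ would divide a unit and be a unit, a contradiction; this one-line remark closes the case and is symmetric to your base case, so it is a cosmetic rather than a genuine gap.
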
 
\begin{proof}
For a proof, we refer the reader to \cite{J}. 
\end{proof}

The following corollary  directly follows from Lemma \ref{L5.5} in terms of the characteristic symbol.

\begin{corollary}
Let $\mathfrak{C}$ be a characteristic symbol. Then, there exists a unique factorization of $[\mathfrak C]$ up to an order and unit. In other words, there exists characteristic symbols $\mathfrak C_i$ for $i \in [n]$ such that 
\[
[\mathfrak{C}]=[\mathfrak{C}_1]\star_\sim [\mathfrak{C}_2]\star_\sim\cdots \star_\sim [\mathfrak{C}_n]
\]
and at most one $[\mathfrak{C}_i]$ can be a unit.
For the case of $n=1$ and $[\mathfrak{C}_1]$  not being a unit, then $[\mathfrak C]$ is a prime. 
\end{corollary}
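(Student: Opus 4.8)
The plan is to verify that $(\mathfrak{D}/_\sim,\star_\sim)$ satisfies the hypotheses of Lemma \ref{L5.5} and then to supply the single ingredient that Lemma \ref{L5.5} does not itself provide, namely the \emph{existence} of a prime factorization; the uniqueness clause then follows verbatim from Lemma \ref{L5.5}. Since we already know $(\mathfrak{D}/_\sim,\star_\sim)$ is a commutative monoid, the two genuinely substantive points remaining are (i) cancellativity and (ii) that every non-unit admits a factorization into finitely many primes. I would carry these out in the order: rank homomorphism and identification of units, existence of a factorization into indecomposables, promotion of indecomposables to primes, and finally cancellativity plus the appeal to Lemma \ref{L5.5}.

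First I would introduce the \emph{rank homomorphism} $\rho([\mathfrak{C}]):=\mathrm{dim}(\mathbf{d})$. Because $\star_s$ merely concatenates size vectors, $\rho$ is well defined on $\sim$-classes and satisfies $\rho([\mathfrak{C}_1]\star_\sim[\mathfrak{C}_2])=\rho([\mathfrak{C}_1])+\rho([\mathfrak{C}_2])$, giving a homomorphism into $(\bbn\cup\{0\},+)$. Using this I would pin down the units: a rank-$0$ class has empty size vector, so its components reduce to a real scalar added under $\star_c$, a scalar $\mathrm{i}\nu_j$ added under $\star_\tF$, and a unit-modulus complex number multiplied under $\star_i$, each lying in a group; hence every rank-$0$ class is invertible, exactly as in the invertibility computation for the Kuramoto characteristic symbol above. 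Conversely $\rho\ge 0$ and additivity force any unit to have rank $0$, so the units are \emph{precisely} the rank-$0$ classes. \emph{Existence} of a factorization then follows by strong induction on $\rho$: if $[\mathfrak{C}]$ is a unit or is indecomposable we are done; otherwise $[\mathfrak{C}]=[\mathfrak{A}]\star_\sim[\mathfrak{B}]$ with both factors non-units, whence $\rho([\mathfrak{A}]),\rho([\mathfrak{B}])\ge 1$ are each strictly smaller than $\rho([\mathfrak{C}])$ by additivity, and the induction hypothesis factors each piece.

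The \emph{hard part} is promoting the indecomposable factors to primes so that Lemma \ref{L5.5} applies, and establishing cancellativity. For ``indecomposable $\Rightarrow$ prime'' I would exploit the rigidity of the fusion rules: a divisibility $[\mathfrak{Q}]\mid[\mathfrak{A}]\star_\sim[\mathfrak{B}]$ forces, along some index partition, a tensor-product (separable) splitting of the initial data of $\mathfrak{Q}$ together with a block-diagonal splitting of its frequency tensor and a $\star_c$-compatible splitting of its coupling tensor, all across the indices coming from $\mathfrak{A}$ and $\mathfrak{B}$. The key is that a generic tensor is \emph{not} separable, and when it is, its index set decomposes into a well-defined finest partition into non-separable blocks; the indecomposability of $\mathfrak{Q}$ then prevents its index block from straddling both factors, forcing $[\mathfrak{Q}]$ to divide one of them. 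For cancellativity I would compare the four fused components on both sides of $[\mathfrak{C}]\star_\sim[\mathfrak{A}]=[\mathfrak{C}]\star_\sim[\mathfrak{B}]$ and recover $\mathfrak{A}$ from $\mathfrak{C}$: on the coupling component one reads off $\kappa^A_{a_*}$ from the entry indexed by $(\mathbf{1},a_*)$ for $a_*\neq\mathbf{1}$ and from the entry $(\mathbf{1},\mathbf{1})$ after subtracting the known $\kappa^{\mathfrak{C}}_{\mathbf{1}}$, while $\star_\tF$ and $\star_i$ invert on the block carrying $\mathfrak{C}$ via their group structure. The delicate point, handled by the block-decomposition construction of Lemma \ref{L3.2}, is that the permutation relating the two sides must be chosen so as not to mix $\mathfrak{C}$'s indices with those of $\mathfrak{A}$ and $\mathfrak{B}$, which is exactly what the separability rigidity secures.

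Once cancellativity and prime factorizability are in hand, Lemma \ref{L5.5} yields uniqueness up to order and unit. Finally, because the units form a group under $\star_\sim$, any two unit factors may be merged into one, producing the normalized statement in which \emph{at most one} $[\mathfrak{C}_i]$ is a unit; and the residual case $n=1$ with $[\mathfrak{C}_1]$ a non-unit is, by definition, the assertion that $[\mathfrak{C}]$ is prime. I expect the separability/block-splitting argument underlying ``indecomposable $\Rightarrow$ prime'' to be the principal obstacle, since it is precisely the place where the combinatorics of $\star_c$ and the tensor product $\otimes$ must be controlled directly rather than merely invoked.
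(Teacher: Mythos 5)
The paper gives essentially no proof of this corollary: it simply asserts that the statement ``directly follows from Lemma \ref{L5.5},'' without verifying that $(\mathfrak{D}/_\sim,\star_\sim)$ is cancellative, without identifying the units, and without establishing that every non-unit class admits \emph{some} factorization into primes (Lemma \ref{L5.5} only gives uniqueness once a prime factorization is assumed to exist on both sides). Your proposal is therefore a genuinely different, and considerably more careful, route: the rank homomorphism correctly pins down the units as exactly the rank-$0$ classes (consistent with the paper's Kuramoto remark), and your strong induction on $\rho$ gives existence of a factorization into indecomposables, which is precisely the ingredient the paper leaves unaddressed.

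However, the two steps on which everything rests are left as sketches, and they are genuine gaps. First, cancellativity: from $[\mathfrak{C}]\star_\sim[\mathfrak{A}]=[\mathfrak{C}]\star_\sim[\mathfrak{B}]$ you only obtain $(\mathfrak{C}\star\mathfrak{A})^\sigma=\mathfrak{C}\star\mathfrak{B}$ for \emph{some} permutation $\sigma$ of all $m_1+m_2$ indices, and your componentwise inversion of $\star_c$, $\star_\tF$, $\star_i$ only works if $\sigma$ respects the block decomposition; you defer this to ``separability rigidity'' but never prove it, and it is exactly the case where $\mathfrak{C}$ and $\mathfrak{A}$ share isomorphic indecomposable blocks that $\sigma$ may permute across the cut. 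Second, ``indecomposable $\Rightarrow$ prime'': you would need (i) that a unit-norm tensor has a well-defined finest partition of its index set into non-separable blocks, (ii) that this same partition is simultaneously compatible with a splitting of the block skew-Hermitian tensor in the form $A^1\star_\tF A^2$ and of the coupling tensor under the zero-pattern of $\star_c$, and (iii) that one common partition works for all $j\in[N]$ at once. None of these is established, and (ii) is delicate because $\star_\tF$ is a Kronecker-sum rather than a tensor product, so a splitting of $A_j$ does not follow from a splitting of $T_j^0$. Until these are supplied, the hypotheses of Lemma \ref{L5.5} are not verified and the corollary is not proved --- though, to be fair, the paper itself never verifies them either.
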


\begin{remark}
\begin{enumerate}
\item
It follows from Grothendieck's theory that a commutative monoid can always be extended to an abelian group by introducing suitable inverses. In this case, such an abelian group is called the Grothendieck group. The easiest example of a Grothendieck group is the construction of $\bbz$ from $\bbn$. In this manner, we would  extend $(\mathfrak{D}/_\sim, \star_\sim)$ to an abelian group.

\vspace{0.2cm}

\item
Let $\mathfrak C$ be a characteristic symbol whose size vector is given as  $\mathbf{d}=(d_1, d_2, \cdots, d_m)$, and let $T_j$ be a solution to the Cauchy problem corresponding to $\mathfrak C$. Then, the number of entries for $T_j$ is  $\#(\mathbf{d}):=\prod_{i=1}^m d_i$. Thus, if the number $\#(\mathbf d)$ is larger, then the computational cost for numerical simulation would be higher. Thus, it is crucially required to reduce such numerical cost.  Suppose that the characteristic symbol $\mathfrak{C}$ can be decomposed as follows:
\[
\mathfrak{C}=\bs_{j=1}^n\mathfrak{C}_j,
\]
and let $\mathbf{d}^j$ be the size vector of $\mathfrak{C}_j$. Then, the number of all elements of tensors is $\sum_{j=1}^n\#(\mathbf{d}^j)$. Since  
\[
\#(\mathbf{d})=\prod_{j=1}^n\#(\mathbf{d}^j),
\]
we have 
\[
\sum_{j=1}^n\#(\mathbf{d}^j)\leq 2(n-1)+\frac{\#(\mathbf{d})}{2^{n-1}}.
\]
Thus, we can roughly measure how the computational cost is reduced when the system is decomposed into subsystems. Precisely, the ratio of reduction is at least 
\[
\frac{2(n-1)}{\#(\mathbf{d})}+\frac{1}{2^{n-1}}.
\]
In this way, our separability results would be applied to reduce the computational cost. 
\end{enumerate}
\end{remark}

\section{Emergent dynamics of weakly coupled models for low-rank tensors} \label{sec:6}
\setcounter{equation}{0}
In this section, we study the emergent dynamics of the weakly coupled model for low-rank tensors that can be derived from the double tensor model in a specific case. As introduced in Section \ref{sec:3.2}, the rank-0, 1, 2 would be related  to the Kuramoto model, the swarm sphere model and the Lohe matrix model, respectively. Since coupled systems between two models of the same ranks have been investigated in literature, we study coupled systems between two models of different ranks. We recall that weakly coupled model \eqref{D-2} of the Kuramoto model and the swarm sphere model is introduced in Section \ref{sec:4.1}.  We here focus on the weakly coupled model of the swarm sphere model and the Lohe matrix model. For this purpose, we consider the LT model whose characteristic symbol is given by the fusion operation of two characteristic symbols corresponding to the swarm sphere model and the Lohe matrix model: 
\begin{equation} \label{F-1}
\mathfrak{C} = ( \mathbf{d}, \mathfrak{K}, \{\Omega_j\}, \{T_j^0\})=(d, (\kappa, 0), \{\Omega_j\}, \{ x_j^0\} )\star \left((n, n), 
\begin{bmatrix}
0&\frac\kappa2\\
0&0
\end{bmatrix}
, \{\mi H_j\}, \{U_j^0\}\right),
\end{equation}
where natural frequency tensors $\Omega_j$ and $H_j$ are $d\times d$ skew-symmetric matrix and skew-hermitian matrix, respectively, and initial data $x_j^0$ and $U_j^0$ belong to $\bbs^{d-1}$ and $\mathbf{U}(n)$, respectively. Then, the corresponding Cauchy problem for \eqref{F-1} reads as follows:
%
\begin{align*} \label{F-1-1} 
\begin{cases}
\displaystyle {\dot x}_j= \Omega_j x_j + \frac{\kappa}{N}\sum_{k=1}^N \Big(\langle U_j, U_k \rangle_\tF x_k-\langle U_k, U_j \rangle_\tF \langle x_k, x_j\rangle x_j \Big),\quad t>0,\vspace{0.2cm}\\
\displaystyle {\dot U}_j=-\mi H_j  U_j +  \frac{\kappa}{2N}\sum_{k=1}^N\langle x_j, x_k\rangle \Big(U_k-U_jU_k^\dagger U_j  \Big),\vspace{0.2cm}\\
(x_j,U_j)(0)=(x_j^0,U_j^0) \in\bbs^{d-1}\times \mathbf{U}(n) , \quad j\in[N],
\end{cases}
\end{align*}
where $H_j$ is an $n\times n$ Hermitian matrix. However for the simplicity of calculation, we will consider $\mathbf{SO}(n)$ instead of $\mathbf{U}(n)$. For this, we introduce the real characteristic symbol.

\begin{definition} \label{D3.3}
Let $\mathfrak{C}=(\mathbf{d}, \mathfrak{K}, \{A_j\}, \{T_j^0\})$ be a characteristic symbol. If $A_j$ and $T_j^0$ are real tensors for all $j \in [N]$, then we call $\mathfrak{C}$ as a real characteristic symbol.
\end{definition}
\begin{remark} \label{R3.1} 
It follows from the uniqueness of a solution to the LT model \eqref{LT} that if $A_j$ and $T_j^0$ are real tensors, then a solution $T_j$ is also real-valued tensor. Thus, the following assertion holds: Let $\mathfrak{C}$ be a real characteristic symbol. If $\{T_j(t)\}_{j=1}^N$ is a solution to the Cauchy problem corresponding to $\mathfrak C$, then $T_j(t)$ is real tensor for all $t\geq0$ and $j\in [N]$.
\end{remark}

\vspace{0.5cm}

On the other hand, the following proposition can be directly deduced from the definition of the fusion operation $\star$, Definition \ref{D3.3} and Remark \ref{R3.1}.
\begin{proposition} \label{P5.2}
Let $\{\mathfrak{C}_\ell\}_{\ell=1}^n$ be a set of real characteristic symbols. Then, $\displaystyle \bs_{\ell=1}^n \mathfrak{C}_{\ell}$ is also a real characteristic symbols. 
\end{proposition}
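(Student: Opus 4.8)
The plan is to observe that, among the four components of a characteristic symbol, only the set of natural frequency tensors $\{A_j\}$ and the initial configuration $\{T_j^0\}$ carry any reality constraint: the size vector always lies in $\bbn^m$ and the coupling strength tensor in $\bbr^{2^m}$, so those two slots are automatically real-valued. Hence it suffices to show that the two fusion operations feeding the remaining slots, namely $\star_\tF$ on natural frequency tensors and $\star_i=\otimes$ on initial data, each send pairs of real tensors to real tensors; the statement for $n$ factors then follows by a routine induction using the associativity of $\star$ (Proposition~\ref{P5.1}).

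First I would verify that $\star_\tF$ preserves reality. From the defining relation \eqref{D-18-0},
\[
[A^1\star_\tF A^2]_{\beta_{*0}\gamma_{*0}\beta_{*1}\gamma_{*1}}=[A^1]_{\beta_{*0}\beta_{*1}}\delta_{\gamma_{*0}\gamma_{*1}}+\delta_{\beta_{*0}\beta_{*1}}[A^2]_{\gamma_{*0}\gamma_{*1}},
\]
each Kronecker-delta symbol takes only the values $0$ or $1$, so if all entries of $A^1$ and $A^2$ are real, then every component of $A^1\star_\tF A^2$ is a sum of products of real numbers and is therefore real. Similarly, for the initial data the fusion $\star_i$ is the tensor product, and
\[
[(T_j^1)^0\otimes (T_j^2)^0]_{\beta_*\gamma_*}=[(T_j^1)^0]_{\beta_*}\,[(T_j^2)^0]_{\gamma_*}
\]
is a product of two real scalars whenever $(T_j^1)^0$ and $(T_j^2)^0$ are real, hence real. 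Combining these two facts shows that $\mathfrak{C}_1\star\mathfrak{C}_2$ is a real characteristic symbol whenever $\mathfrak{C}_1$ and $\mathfrak{C}_2$ are.

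Finally, I would argue by induction on $n$. The case $n=1$ is immediate and the case $n=2$ is exactly the preceding paragraph. For the inductive step, the associativity guaranteed by Proposition~\ref{P5.1} lets me write $\bs_{\ell=1}^{n}\mathfrak{C}_\ell=\big(\bs_{\ell=1}^{n-1}\mathfrak{C}_\ell\big)\star\mathfrak{C}_n$; by the inductive hypothesis the first factor is a real characteristic symbol, and applying the $n=2$ case to this factor together with $\mathfrak{C}_n$ yields the assertion. There is no genuine obstacle here: the only point requiring any care is the bookkeeping observation that the size-vector and coupling-strength slots impose no reality condition, so that it suffices to track the $\star_\tF$ and $\otimes$ slots; once this is noticed, the verification reduces to the elementary fact that sums and products of real numbers remain real.
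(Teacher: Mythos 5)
Your proposal is correct and follows exactly the route the paper intends: the paper states Proposition \ref{P5.2} without proof, asserting it is a direct consequence of the definition of $\star$, Definition \ref{D3.3} and Remark \ref{R3.1}, and your argument simply fills in those details (reality of the $\star_\tF$ and $\otimes$ slots, triviality of the other two slots, and induction via associativity). No gaps.
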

Now, we consider the following Cauchy problem on $\bbs^{d-1} \times \mathbf{SO}(n)$: 
\begin{equation} \label{F-3} 
\begin{cases}
\displaystyle {\dot x}_j= \Omega_j x_j + \frac{\kappa}{N}\sum_{k=1}^N\langle U_j, U_k \rangle_\tF(x_k-\langle x_k, x_j\rangle x_j),\quad t>0, \vspace{0.2cm}\\
\displaystyle {\dot U}_j=A_j U_j +  \frac{\kappa}{2N}\sum_{k=1}^N\langle x_j, x_k\rangle(U_k-U_jU_k^\top U_j ),\vspace{0.2cm}\\
(x_j,U_j)(0)=(x_j^0,U_j^0) \in\bbs^{d-1}\times \mathbf{SO}(n),
\end{cases}
\end{equation}
where $A_j$ is a $n\times n$ skew-symmetric matrix. \newline

It follows from Remark \ref{R3.1} or straightforward calculation that $U_i$ and $x_i$ remain in $\mathbf{SO}(n)$ and $\bbs^{d-1}$, respectively. Before we study the emergent dynamics of the whole ensemble, we recall several definitions in relation to the  collective dynamics of \eqref{F-3}.
\begin{definition} \label{D6.1}
\begin{enumerate}
\item System \eqref{F-3} exhibits complete aggregation if the following estimate holds:
\[
\lim_{t\to\infty} \max_{1\leq i,j\leq N}\Big( |x_i(t) - x_j(t) | + \|U_i(t) - U_j(t)\|_\tF\Big) = 0.
\]
\item System \eqref{F-3} exhibits practical aggregation if the following estimate holds:
\[
\lim_{\kp\to\infty} \limsup_{t\to\infty} \max_{1\leq i,j\leq N}\Big( |x_i(t) - x_j(t) | + \|U_i(t) - U_j(t)\|_\tF\Big) = 0.
\]
\item The state $\{(x_j,U_j)\}$ tends to a partially locked state if the following relation holds:
\[
\lim_{t\to\infty} \max_{1\leq i,j\leq N}\Big( |x_i(t) - x_j(t) |\Big) =0\quad \textup{and} \quad \exists~~\lim_{t\to\infty} U_i(t) U_j^\top (t). 
\]
\end{enumerate}
\end{definition}

\vspace{0.2cm}

In the following subsections, we study the emergent dynamics of \eqref{F-3} depending on the dissimilarity of natural frequency tensors:
\begin{align*} \label{F-4}
\begin{aligned}
&\textup{(i) Homogeneous ensemble:} \quad \Omega_j \equiv \Omega,\quad A_j \equiv A,\quad j \in [N]. \\
&\textup{(ii) Heterogeneous ensemble:} \quad \Omega_i \neq \Omega_j,\quad A_i \neq A_j,\quad \textup{for some $i\neq j$.} \\
&\textup{(iii) Partially homogeneous ensemble:} \quad \Omega_j = \Omega,\quad A_i \neq A_j,\quad \textup{for some $i\neq j$.} 
\end{aligned}
\end{align*}

\subsection{Homogeneous ensemble} \label{sec:6.1}
In this subsection, we consider a homogeneous ensemble:
\begin{equation*} \label{F-4-1}
\Omega_i = O,\quad A_i  = O, \quad i \in [N].
\end{equation*}
In this case, system \eqref{F-3} with zero frequency tensors reads as 
\begin{align}\label{F-5}
\begin{cases}
\displaystyle {\dot x}_j= \frac{\kappa}{N}\sum_{k=1}^N\langle U_j, U_k \rangle_\tF(x_k-\langle x_k, x_j\rangle x_j),\quad t>0, \vspace{0.2cm}\\
\displaystyle {\dot U}_j=\frac{\kappa}{2N}\sum_{k=1}^N\langle x_j, x_k\rangle(U_k-U_jU_k^\top U_j ),\vspace{0.2cm}\\
x_j(0)=x_j^0\in\bbs^{d-1},\quad U_j(0)=U_j^0\in\SOn,\quad j\in [N].
\end{cases}
\end{align}
For aggregation estimate, we set 
\begin{equation} \label{F-6}
h_{ij}:= \langle x_i,x_j\rangle,\quad G_{ij}:= U_iU_j^\top,\quad g_{ij} := \langle U_i,U_j\rangle_\tF=\textup{tr}(G_{ij}),
\end{equation}
and define the functionals:
\[ \mathcal A(\mathcal X):=\min_{1\leq i\neq j\leq N}\langle x_i,x_j\rangle,\quad \mathcal D(\mathcal H):=\max_{1\leq i,j\leq N} |1-h_{ij}|, \quad \mathcal D(\mathcal U) := \max_{1\leq i,j\leq N } \|U_i - U_j \|_\tF.
\]
Below, we provide  a positively invariant region for \eqref{F-5}. 
\begin{lemma} \label{L6.1}
Suppose initial data $\{(x_i^0, U_i^0))\}_{i=1}^N$ satisfy
\begin{equation} \label{F-7}
\mathcal A(\mathcal X^0)>0,\quad \mathcal D(\mathcal U^0)<\sqrt2,
\end{equation}
and let $(\mathcal X,\mathcal U)$ be a solution to \eqref{F-5}. Then, one has
\begin{equation*}
\mathcal A(\mathcal X(t))>0 \quad \mathcal D(\mathcal U(t))<\sqrt2,   \quad t>0.
\end{equation*}
\end{lemma}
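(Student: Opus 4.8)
The plan is to run a continuity (bootstrapping) argument on the two order parameters $\mathcal{A}(\mathcal{X})$ and $\mathcal{D}(\mathcal{U})$ simultaneously, exploiting the fact that on $\SOn$ every state has constant Frobenius norm $\|U_i\|_\tF^2 = \mathrm{tr}(U_i^\top U_i) = n$, so that
\[ \|U_i - U_j\|_\tF^2 = 2n - 2g_{ij}, \qquad g_{ij} = \langle U_i, U_j\rangle_\tF . \]
In particular $\mathcal{D}(\mathcal{U}) < \sqrt2$ is equivalent to $\min_{i,j} g_{ij} > n-1 > 0$, while $\mathcal{A}(\mathcal{X}) > 0$ means $\min_{i\neq j} h_{ij} > 0$. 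The two conditions reinforce one another: positivity of the $g_{ij}$ makes the effective coupling weights in the $\dot x_j$-equation of \eqref{F-5} positive, which will keep $\mathcal{A}(\mathcal{X})$ away from $0$, and positivity of the $h_{ij}$ makes the effective coupling weights in the $\dot U_j$-equation positive, which will keep $\mathcal{D}(\mathcal{U})$ below $\sqrt2$. I therefore set $t^* := \sup\{t : \mathcal{A}(\mathcal{X}(s)) > 0 \text{ and } \mathcal{D}(\mathcal{U}(s)) < \sqrt2 \text{ for all } s \in [0,t)\}$, which is positive by \eqref{F-7} and continuity, and aim to show $t^* = \infty$.

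For the first half I would differentiate $h_{ij} = \langle x_i, x_j\rangle$ along \eqref{F-5}. Using $\langle x_k - h_{ki}x_i, x_j\rangle = h_{kj} - h_{ki}h_{ij}$ one obtains
\[ \dot h_{ij} = \frac{\kp}{N}\sum_{k=1}^N\Big[ g_{ik}(h_{kj} - h_{ki}h_{ij}) + g_{jk}(h_{ki} - h_{kj}h_{ij})\Big]. \]
At any pair $(i,j)$ realizing the minimum one has $h_{kj}, h_{ki} \ge \mathcal{A}(\mathcal{X}) = h_{ij} \ge 0$ together with $h_{ki}, h_{kj}\le 1$, whence $h_{kj} - h_{ki}h_{ij} \ge h_{ij}(1 - h_{ki}) \ge 0$ and symmetrically for the other term; since $\mathcal{D}(\mathcal{U}) < \sqrt2$ forces every $g_{ik}, g_{jk}$ to be positive on $[0,t^*)$, each summand is nonnegative. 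As $\mathcal{A}(\mathcal{X})$ is a Lipschitz minimum of finitely many smooth functions, its upper Dini derivative equals the minimum of $\dot h_{ij}$ over the currently extremal pairs, hence is $\ge 0$. Therefore $\mathcal{A}(\mathcal{X})$ is nondecreasing on $[0,t^*)$ and stays $\ge \mathcal{A}(\mathcal{X}^0) > 0$ with a strict margin.

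For the second half I would track $\mathcal{D}(\mathcal{U})^2 = \max_{i,j}\|U_i - U_j\|_\tF^2 = 2n - 2\min_{i,j}g_{ij}$ by differentiating $\|U_i - U_j\|_\tF^2 = 2\langle U_i - U_j, \dot U_i - \dot U_j\rangle_\tF$. The tangency identity $\langle U_i, U_k - U_iU_k^\top U_i\rangle_\tF = g_{ik} - g_{ik} = 0$ eliminates the self-interaction, leaving gain--loss cubic terms weighted by $h_{ik} > 0$ (valid on $[0,t^*)$ by the first half). The target is a differential inequality of the form $\frac{d}{dt}\mathcal{D}(\mathcal{U})^2 \le 0$ on the set $\{\mathcal{D}(\mathcal{U}) < \sqrt2\}$, via the extremal-pair Dini argument applied to the maximum, so that $\mathcal{D}(\mathcal{U})$ is nonincreasing and remains $\le \mathcal{D}(\mathcal{U}^0) < \sqrt2$. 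The threshold $\sqrt2$ is precisely what forces the loss term to dominate the gain term, so this is where the Lohe-matrix contraction mechanism enters. Combining the two one-sided bounds, on $[0,t^*)$ both strict inequalities persist with a margin bounded away from zero, so if $t^* < \infty$ continuity would make them survive at $t^*$, contradicting maximality; hence $t^* = \infty$, which is the claimed invariance.

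I expect the main obstacle to lie in the third paragraph: the cross cubic trace terms such as $\mathrm{tr}(U_j^\top U_i U_k^\top U_i)$ do not collapse directly into the inner products $g_{\cdot\cdot}$, so they must be estimated — for instance by Cauchy--Schwarz in the Frobenius norm together with the constraint $\|U_i\|_\tF^2 = n$ — sharply enough that the $\sqrt2$ threshold yields the correct sign in $\frac{d}{dt}\mathcal{D}(\mathcal{U})^2 \le 0$. A secondary technical point is the uniform treatment of the non-smoothness of both the $\min$ functional $\mathcal{A}(\mathcal{X})$ and the $\max$ functional $\mathcal{D}(\mathcal{U})$ through Dini derivatives over their active index sets.
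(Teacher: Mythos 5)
Your overall strategy---differential inequalities for $\mathcal A(\mathcal X)$ and $\mathcal D(\mathcal U)^2$ at extremal index pairs, combined with a continuation argument---is the same as the paper's, and your first half is complete: the identity for $\dot h_{ij}$, the observation that $\mathcal D(\mathcal U)<\sqrt2$ forces $g_{ik}>n-1>0$, and the bound $h_{kj}-h_{ik}h_{ij}\ge \mathcal A(\mathcal X)(1-h_{ik})\ge 0$ at a minimizing pair are exactly what the paper uses to propagate $\mathcal A(\mathcal X)>0$.

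The genuine gap is in the second half, and you have flagged it yourself without closing it: you never prove the inequality $\frac{\d}{\dt}\mathcal D(\mathcal U)^2\le 0$ on the region $\{\mathcal D(\mathcal U)<\sqrt2,\ \min_{i,j}h_{ij}>0\}$, but only announce it as a ``target,'' deferring the quartic traces $\mathrm{tr}(U_j^\top U_iU_k^\top U_i)$ to an unspecified Cauchy--Schwarz estimate. That step is where the threshold $\sqrt2$ actually enters, so the lemma is not proved without it; moreover, the route you suggest is too lossy, since Cauchy--Schwarz on the Frobenius norm only gives $|\mathrm{tr}(U_j^\top U_iU_k^\top U_i)|\le n$, which cannot detect the sign near the threshold. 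The paper does not estimate these traces at all: it passes to the relative states $G_{ij}=U_iU_j^\top$ (so that $\|I_n-G_{ij}\|_\tF^2=\|U_i-U_j\|_\tF^2$ and the quartic traces are packaged into the products $G_{ik}G_{ij}$, $G_{ij}G_{kj}$ appearing in $\dot G_{ij}$), and then rewrites $\frac{\d}{\dt}\|I_n-G_{i_tj_t}\|_\tF^2$ as a sum of (a) comparison terms $-\frac{\kp}{N}h_{\cdot k}\big(\mathcal D(\mathcal U)^2-\|I_n-G_{k\cdot}\|_\tF^2\big)\le 0$ at the maximizing pair, and (b) a term carrying the explicit prefactor $-(2-\mathcal D(\mathcal U)^2)$ times a nonnegative quantity. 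Both pieces are nonpositive precisely under the two hypotheses being propagated, which is what closes the bootstrap. To complete your proof you would need to carry out this (or an equivalent) exact algebraic reorganization of the cubic coupling on $\SOn$, not a norm estimate of the individual quartic traces.
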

\begin{proof}
It follows from \eqref{F-5} and \eqref{F-6} that
\begin{align} \label{F-8}
\begin{aligned}
&{\dot h}_{ij} = \frac\kp N \sum_{k=1}^N \Big( g_{ik}(h_{kj} - h_{ik}h_{ij}) + g_{jk} (h_{ik} - h_{kj}h_{ij}) \Big), \\
&{\dot G}_{ij} = \frac\kp N \sum_{k=1}^N \Big( h_{ik}( G_{kj} - G_{ik} G_{ij}) + h_{jk}(G_{ik} - G_{ij}G_{kj}) \Big).
\end{aligned}
\end{align}
Since a solution to \eqref{F-5} is analytic and we assume \eqref{F-7}, there exists a finite time $T_1>0$ such that
\begin{equation*} \label{F-9}
\mathcal A(\mathcal X(t))>0,\quad \mathcal D(\mathcal U(t))<\sqrt2,\quad t\in [0,T_1).
\end{equation*}
Moreover, there also exists a finite time $T_2>0$ such that
\begin{equation*} \label{F-10}
\mathcal A(\mathcal X(t)) = h_{i_t j_t}(t),\quad t\in [0,T_2).
\end{equation*}
We set 
\[  T_*:= \min \{ T_1,T_2\}. \]
Since the following relation holds:
\begin{equation*} \label{F-11}
\|U_i - U_j\|_\tF^2 = 2( d - g_{ij}) <2, \quad  \textup{or equivalently,} \quad  g_{ij}>d-1,\quad t\in [0,T_*),
\end{equation*}
one has 
\begin{align*} 
\begin{aligned}
\frac\d\dt \mathcal A(\mathcal X) &\geq \frac\kp N \sum_{k=1}^N\Big( g_{i_t k} (h_{kj_t} - h_{i_tk} \mathcal A(\mathcal X)) + g_{j_t k} (h_{i_tk} - h_{kj_t} \mathcal A( \mathcal X))  \Big) \\
&> \frac{\kp (n-1)}{N}\mathcal A(\mathcal X)  \sum_{k=1}^N (1-h_{i_tk} + 1-h_{kj_t}) >0, \quad t\in [0,T_*).
\end{aligned}
\end{align*}
On the other hand, we rewrite $\eqref{E-5}_2$ as
\begin{align*}
{\dot g}_{ij} & =\frac\kp N \sum_{k=1}^N h_{ik}\Big( \|I_n-G_{ij}\|_\tF^2 - \|I_n-G_{kj}\|_\tF^2) + h_{jk} ( \|I_n-G_{ij}\|_\tF^2 - \|I_n-G_{kj}\|_\tF^2\Big)  \\
& \hspace{0.5cm}+ (2- \|I_n - G_{ij}\|_\tF^2) \cdot \frac{\kp}{2N} \sum_{k=1}^N \Big( h_{ik} \| I_n - G_{ik}\|_\tF^2 + h_{jk} \|I_n - G_{jk}\|_\tF^2\Big) .
\end{align*}
By the same argument, one has 
\begin{align*} 
\begin{aligned}
\frac\d\dt \mathcal D(\mathcal U)^2 &= -\frac\kp N \sum_{k=1}^N \Big( h_{i_tk} ( \mathcal D(\mathcal U)^2 - \|I_n - G_{kj_t}\|_\tF^2)  + h_{j_tk} (\mathcal D(\mathcal U)^2 - \|I_n - G_{ki_t}\|_\tF^2) \Big) \\
&\hspace{0.5cm} -(2- \mathcal D(\mathcal U)^2) \cdot \frac{\kp}{2N} \sum_{k=1}^N \Big(h_{i_tk} \|I_n - G_{ki_t} \|_\tF^2 + h_{j_tk} \|I_n - G_{kj_t}\|_\tF^2\Big)\\
&<0,\quad t\in [0,T_*).
\end{aligned}
\end{align*}
Hence, if we continue the process above successively, then we obtain the desired assertion.
\end{proof}

Now, we are ready to provide a sufficient condition leading to complete aggregation for \eqref{E-1}.

\begin{theorem} \label{T6.1}
Suppose the initial data  $(\mathcal X^0,\mathcal U^0)$ satisfy 
\[
 \mathcal A(\mathcal X^0)>0,\quad \mathcal D(\mathcal U^0)<\sqrt 2,
\]
and let $(\mathcal X,\mathcal U)$ be a solution to \eqref{F-5}. Then, system \eqref{F-5} exhibits complete aggregation:
\[
\lim_{t\to\infty}\Big( \mathcal D(\mathcal X(t)) + \mathcal D(\mathcal U(t)) \Big)=0.
\]
Moreover, the convergence rate is exponential. 
\end{theorem}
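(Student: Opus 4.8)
The plan is to use the positively invariant region from Lemma \ref{L6.1} to \emph{decouple} the aggregation estimates for the two factors. The crucial observation is that throughout the evolution the cross-coupling weights stay uniformly bounded away from zero: $\mathcal{A}(\mathcal{X})$ is nondecreasing (as shown in Lemma \ref{L6.1}), so $h_{ij}\geq \mathcal{A}(\mathcal{X}^0)=:\mathcal{A}_0>0$; and $\mathcal{D}(\mathcal{U})$ is nonincreasing, so $g_{ij}=n-\tfrac12\|U_i-U_j\|_\tF^2 > n-1\geq 1$ (taking $n\geq 2$; for $n=1$ the $\SOn$-factor is trivial) and $2-\mathcal{D}(\mathcal{U})^2\geq 2-\mathcal{D}(\mathcal{U}^0)^2=:\delta_0>0$. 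Because each subsystem is then driven by coupling strengths bounded below by a positive constant, one may treat the $x$- and $U$-aggregation essentially independently, deriving a Gr\"onwall-type inequality for each. Since all functionals are Lipschitz but only piecewise smooth, I would work with the upper Dini derivative $D^+$, using analyticity of the solution so that the extremizing index pair $(i_t,j_t)$ is locally constant in time.

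For the sphere component, in the invariant region $h_{ij}\in(0,1]$, so $\mathcal{D}(\mathcal{H})=1-\mathcal{A}(\mathcal{X})$ and $\mathcal{D}(\mathcal{X})^2=2\mathcal{D}(\mathcal{H})$. Starting from the identity for $\dot h_{i_tj_t}$ in \eqref{F-8} with $h_{i_tj_t}=\mathcal{A}(\mathcal{X})$, I would bound each bracket below by $\mathcal{A}(\mathcal{X})(1-h_{\cdot\,\cdot})$ and invoke $g_{\cdot k}>n-1$ to get
\[
D^+\mathcal{A}(\mathcal{X})\;\geq\;\frac{\kp(n-1)\mathcal{A}(\mathcal{X})}{N}\sum_{k=1}^N\big((1-h_{i_tk})+(1-h_{kj_t})\big).
\]
The key step is extracting a coercive term: retaining only $k=i_t$ and $k=j_t$ in the sum, each contributes exactly $\mathcal{D}(\mathcal{H})$, so the sum is $\geq 2\mathcal{D}(\mathcal{H})$. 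With $\mathcal{A}(\mathcal{X})\geq\mathcal{A}_0$ this yields $D^+\mathcal{D}(\mathcal{H})\leq -\tfrac{2\kp(n-1)\mathcal{A}_0}{N}\mathcal{D}(\mathcal{H})$, hence exponential decay of $\mathcal{D}(\mathcal{H})$ and therefore of $\mathcal{D}(\mathcal{X})$.

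For the orthogonal-group component, I would start from the expression for $\tfrac{\d}{\dt}\mathcal{D}(\mathcal{U})^2$ recorded in the proof of Lemma \ref{L6.1}. Its first sum is manifestly $\leq 0$ because $\mathcal{D}(\mathcal{U})^2$ is the running maximum, so the decay rate must come from the higher-order term $-(2-\mathcal{D}(\mathcal{U})^2)\tfrac{\kp}{2N}\sum_k(\cdots)$. The same extraction trick applies: taking $k=j_t$ isolates $h_{i_tj_t}\|I_n-G_{j_ti_t}\|_\tF^2=h_{i_tj_t}\mathcal{D}(\mathcal{U})^2\geq \mathcal{A}_0\mathcal{D}(\mathcal{U})^2$. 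Combined with $2-\mathcal{D}(\mathcal{U})^2\geq\delta_0$, this gives $D^+\mathcal{D}(\mathcal{U})^2\leq -\tfrac{\kp\mathcal{A}_0\delta_0}{2N}\mathcal{D}(\mathcal{U})^2$, so $\mathcal{D}(\mathcal{U})$ also decays exponentially. Adding the two estimates then yields complete aggregation with an explicit exponential rate.

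The main obstacle I anticipate is twofold. First, making the \emph{diagonal term extraction} rigorous, namely justifying that the single summation indices $k\in\{i_t,j_t\}$ already produce a term proportional to the entire functional; this is clean here precisely because the extremizing pair is itself an admissible summation index, but it is the step on which the whole exponential rate hinges. Second, the bookkeeping needed to confirm that the constants $\mathcal{A}_0$ and $\delta_0$ are genuinely time-uniform rests entirely on the monotonicity established in Lemma \ref{L6.1}, so the argument is only as strong as that invariance. The non-smoothness of the $\max/\min$ functionals is a routine but necessary technical point, handled by passing to $D^+$ and using analyticity of the flow.
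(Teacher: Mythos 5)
Your proposal is correct and follows the same overall architecture as the paper's proof: invoke Lemma \ref{L6.1} for the positively invariant region (so that $h_{ij}\geq \mathcal{A}(\mathcal{X}^0)>0$, $g_{ij}>n-1$ and $2-\mathcal{D}(\mathcal{U})^2$ stays bounded below), then close a Gr\"onwall inequality for $\mathcal{A}(\mathcal{X})$ and for $\mathcal{D}(\mathcal{U})^2$ separately. The one place where you diverge is the coercivity step, and it is worth noting the difference. You extract only the diagonal summands $k\in\{i_t,j_t\}$, each of which contributes exactly the extremal quantity; this is clean and rigorous, but it throws away $N-2$ of the $N$ terms, so the resulting decay rates ($\tfrac{2\kp(n-1)\mathcal{A}_0}{N}$ and $\tfrac{\kp\mathcal{A}_0\delta_0}{2N}$) degenerate as $N\to\infty$. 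The paper instead bounds \emph{every} summand from below using the elementary inequality $a^2+b^2>\tfrac12(a+b)^2$ together with the triangle inequality, e.g.
\[
1-h_{i_tk}+1-h_{j_tk}=\tfrac12\big(|x_{i_t}-x_k|^2+|x_{j_t}-x_k|^2\big)>\tfrac14|x_{i_t}-x_{j_t}|^2=\tfrac12\big(1-\mathcal{A}(\mathcal{X})\big),
\]
so the sum over $k$ produces a factor $N$ that cancels the $1/N$ normalization and yields the $N$-independent logistic inequality $\tfrac{\d}{\dt}\mathcal{A}(\mathcal{X})>\tfrac{\kp(n-1)}{2}\mathcal{A}(\mathcal{X})(1-\mathcal{A}(\mathcal{X}))$, and similarly $\tfrac{\d}{\dt}\mathcal{D}(\mathcal{U})^2<-\tfrac{\kp}{4}\mathcal{A}(\mathcal{X}^0)(2-\mathcal{D}(\mathcal{U})^2)\mathcal{D}(\mathcal{U})^2$. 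Since the theorem only asserts exponential convergence without quantifying the rate, your weaker $N$-dependent constants still prove the statement; the paper's version is simply uniform in the ensemble size, which matters if one has a mean-field limit in mind. Your handling of the remaining technical points (Dini derivatives, monotonicity of $\mathcal{A}(\mathcal{X})$ and $\mathcal{D}(\mathcal{U})$ from Lemma \ref{L6.1}, and the identity $\mathcal{D}(\mathcal{X})^2=2(1-\mathcal{A}(\mathcal{X}))$) matches the paper's and is sound.
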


\begin{proof}
Since we assume \eqref{E-20}, we can apply Lemma \ref{L5.1}:
\begin{equation*}
h_{ij}(t) >0,\quad g_{ij}(t) >d-1,\quad t>0,\quad i\neq j.
\end{equation*}
Note that 
\[
\mathcal D(\mathcal X(t))^2 = \max_{1\leq i\neq j \leq N } |x_i(t) - x_j(t)|^2 = 2 (1- \mathcal A(\mathcal X(t))).
\]
Now we choose extremal indices $(i_t,j_t)$ such that 
\[  \mathcal A(\mathcal X(t)) = \langle x_{i_t},x_{j_t}\rangle. \]
Then, one has 
\begin{align*}
1-h_{i_tk} + 1-h_{j_tk} &= \frac12(|x_{i_t} - x_k|^2 + |x_{j_t} - x_k|^2 ) >\frac14(|x_{i_t} - x_k| + |x_{j_t} - x_k|)^2 \\
& >\frac14 |x_{i_t} - x_{j_t} |^2  = \frac14\mathcal D(\mathcal X)^2 = \frac12 ( 1- \mathcal A(\mathcal X)).
\end{align*}
It follows from \eqref{E-5} and \eqref{E-10} that
\[
\frac\d\dt \mathcal A(\mathcal X) > \frac{\kp(n-1)}{N} \mathcal A(\mathcal X) \sum_{k=1}^N (1- h_{i_tk} + 1-h_{j_tk}) > \frac{\kp(n-1)}{2} \mathcal A(\mathcal X) ( 1- \mathcal A( \mathcal X)).
\]
Thus, we obtain the desired exponential convergence of $\mathcal D(\mathcal X)$. On the other hand for $\mathcal D(\mathcal U)$, we use \eqref{E-12} to find
\begin{align*}
\frac\d\dt \mathcal D(\mathcal U)^2 < -\frac{\kp}{4 } \mathcal A(\mathcal X^0)(2-\mathcal D(\mathcal U)^2) \mathcal D(\mathcal U)^2.
\end{align*}
This shows the desired assertion. 
\end{proof}
\begin{remark}
The initial conditions in Theorem \ref{T6.1} leading to complete aggregation appear in \cite{C-H} and \cite{H-R} for decoupled systems, respectively. To be more specific, by letting $U_i \equiv U$ in $\eqref{E-1}_1$, the model reduces to the model on $\bbs^{d-1}$:
\begin{equation} \label{F-14}
\dot x_j = \frac\kp N \sum_{k=1}^N (x_k - \langle x_i,x_k\rangle x_i).
\end{equation}
Then, the authors in \cite{C-H} showed that if the initial data satisfy $\mathcal A(\mathcal X^0)> 0$, then  system \eqref{F-14} displays complete aggregation. Similarly by setting $x_j \equiv x$ in $\eqref{E-1}_2$, it reduces to the model on $\SOd$:
\begin{equation} \label{F-15}
\dot U_j = \frac{\kp}{2N} \sum_{k=1}^N ( U_k - U_jU_k^\top U_j).
\end{equation}
Although the model in \cite{H-R} is defined on the unitary group $\mathbf{U}(d)$, a similar calculation leads to the same result, i.e., if the initial data satisfy $\mathcal D(\mathcal U^0)<\sqrt2$, then \eqref{F-15} exhibits complete synchronization. 
\end{remark}
\subsection{Heterogeneous ensemble} \label{sec:6.2}

In this subsection, we consider a heterogeneous ensemble:
\begin{equation} \label{F-16}
\Omega_i \neq \Omega_j,\quad A_i \neq A_j \quad \textup{for some $i\neq j \in [N]$.}
\end{equation}
Due to the dissimilarity between natural frequencies, we might not expect the emergence of complete aggregation. Instead, our goal is to provide a sufficient framework leading to practical aggregation.

\begin{theorem} \label{T6.2} 

Suppose initial data and system parameters satisfy 
\begin{equation} \label{F-16-1}
\mathcal A(\mathcal X^0) > \frac12, \quad  \mathcal D(\mathcal U^0)^2 < 1+ \sqrt{ 1-\frac{4 \mathcal D(\mathcal A)}{\kp \mathcal A(\mathcal X^0) }  } ,\quad  \kp >\min\left\{  \frac{4 \mathcal D(\mathcal A)}{\mathcal A(\mathcal X^0)},\frac{8\mathcal D(\Omega) }{\mathcal A(\mathcal U^0)}\right\} ,
\end{equation}
and let $(\mathcal X, \mathcal U)$ be a solution to \eqref{F-3} with \eqref{F-16}. Then, practical aggregation emerges asymptotically: 
\[
\limsup_{t\to\infty} \mathcal D(\mathcal X(t)) \leq 1 -\sqrt{1-\frac{8\mathcal D(\Omega)}{\kp \mathcal A(\mathcal U^0)}}  \quad\textup{and}\quad  \limsup_{t\to\infty} \mathcal D(\mathcal U(t)) \leq 1- \sqrt{ 1-\frac{4 \mathcal D(\mathcal A)}{\kp \mathcal A(\mathcal X^0) }  }. 
\]
\end{theorem}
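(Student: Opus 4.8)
The plan is to reduce the coupled vector/matrix dynamics of \eqref{F-3} to a closed system of scalar differential inequalities for the extremal order parameters, and then to run a continuity (bootstrap) argument combined with a Riccati-type comparison principle. The overarching idea is that the heterogeneity of $\{\Omega_j\}$ and $\{A_j\}$ enters the evolution of the order parameters only as a bounded additive forcing, so the dissipative mechanism from the homogeneous analysis (Lemma \ref{L6.1} and Theorem \ref{T6.1}) survives, now balanced against forcing of size $\mathcal{D}(\Omega)$ and $\mathcal{D}(\mathcal{A})$.

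First I would differentiate the inner products $h_{ij} = \langle x_i, x_j\rangle$ and $g_{ij} = \langle U_i, U_j\rangle_\tF$ (together with $G_{ij} = U_i U_j^\top$) along \eqref{F-3}, exactly as in \eqref{F-8} for the homogeneous case, but retaining the free-flow contributions. The crucial observation is that, because $\Omega_j$ and $A_j$ are skew-symmetric and the states are normalized, the free-flow terms collapse to
\[
\langle (\Omega_i - \Omega_j) x_i, x_j\rangle \quad\text{and}\quad \mathrm{tr}\big(U_i^\top (A_j - A_i) U_j\big),
\]
which are controlled in absolute value by $\mathcal{D}(\Omega)$ and $\mathcal{D}(\mathcal{A})$, respectively, while the coupling terms are identical to the homogeneous case and remain dissipative with gains $\kappa g_{ij}$ (for the $x$-block) and $\kappa h_{ij}$ (for the $U$-block). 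Selecting extremal index pairs and applying the standard Rademacher/Danskin argument for the time derivative of a $\max/\min$ of smooth functions, I would then extract two scalar inequalities, one for $\mathcal{D}(\mathcal{X})$ (equivalently $1-\mathcal{A}(\mathcal{X})$) and one for $\mathcal{D}(\mathcal{U})^2$.

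Each inequality takes the Riccati form
\[
\frac{d}{dt} w \le -c\,\kappa\,m\,(2 - w)\,w + (\text{forcing}),
\]
for a positive numerical constant $c$, where $m = \mathcal{A}(\mathcal{U})$ in the $x$-equation with forcing proportional to $\mathcal{D}(\Omega)$, and $m = \mathcal{A}(\mathcal{X})$ in the $U$-equation with forcing proportional to $\mathcal{D}(\mathcal{A})$. The stable equilibrium of the right-hand side is precisely $w_* = 1 - \sqrt{1 - (\text{forcing})/(c\kappa m)}$, whose explicit form reproduces the coefficients $8\mathcal{D}(\Omega)/(\kappa\mathcal{A}(\mathcal{U}^0))$ and $4\mathcal{D}(\mathcal{A})/(\kappa\mathcal{A}(\mathcal{X}^0))$ in the conclusion, while the companion unstable root $1 + \sqrt{\cdots}$ furnishes the upper threshold appearing in the hypotheses \eqref{F-16-1}; the conditions $\mathcal{A}(\mathcal{X}^0)>\tfrac12$ and the lower bound on $\kappa$ guarantee the radicands are nonnegative and that the forcing is strictly dominated by the dissipation.

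The argument then closes by a coupled bootstrap. I would first establish a positively invariant region (the analog of Lemma \ref{L6.1}) on which the lower bounds $\mathcal{A}(\mathcal{X}) \ge \mathcal{A}(\mathcal{X}^0)$ and $\mathcal{A}(\mathcal{U}) \ge \mathcal{A}(\mathcal{U}^0)$ persist and $\mathcal{D}(\mathcal{U})^2$ stays below its unstable threshold; inside this region the two Riccati inequalities effectively decouple, since each gain $m$ is frozen at its guaranteed lower value, so the scalar comparison principle yields $\limsup_{t\to\infty} w(t) \le w_*$ in each block, which is exactly the asserted pair of estimates. The main obstacle I anticipate is the interdependence of the two blocks: the decay of $\mathcal{D}(\mathcal{X})$ requires $\mathcal{A}(\mathcal{U})$ to stay uniformly positive and the decay of $\mathcal{D}(\mathcal{U})$ requires $\mathcal{A}(\mathcal{X})$ to stay uniformly positive, so the invariance of the trapping region and the propagation of \emph{both} lower bounds for all time must be verified simultaneously before the comparison can be invoked. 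The presence of $\min\{\cdots\}$ in the threshold for $\kappa$ reflects that it suffices for one block to drive the contraction, but the bookkeeping ensuring the cross-coupling coefficients never collapse is the delicate point that the forcing-versus-dissipation balance in \eqref{F-16-1} is designed to control.
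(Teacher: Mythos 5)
Your proposal follows essentially the same route as the paper's proof: it derives the same Riccati-type differential inequalities for the extremal quantities $\mathcal A(\mathcal X)$ and $\mathcal D(\mathcal U)^2$, with the heterogeneity of $\{\Omega_j\}$ and $\{A_j\}$ entering only as bounded additive forcing of size $\mathcal D(\Omega)$ and $\mathcal D(\mathcal A)$, propagates the lower bounds on $\mathcal A(\mathcal X)$ and $\mathcal A(\mathcal U)$ by the same continuity/bootstrap argument, and reads off the asymptotic bounds from the stable roots via the comparison principle. The only cosmetic difference is that the paper phrases the $x$-block inequality as $\frac{\kp}{2}\mathcal A(\mathcal U)\mathcal A(\mathcal X)(1-\mathcal A(\mathcal X))-\mathcal D(\Omega)$ rather than in a $(2-w)w$ form, which is equivalent under $\mathcal D(\mathcal X)^2=2(1-\mathcal A(\mathcal X))$.
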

\begin{proof}
 By slightly modifying \eqref{F-8}, one has
 \begin{align} \label{F-17}
 \begin{aligned}
&{\dot  h}_{ij} = \langle x_i,(\Omega_j -\Omega_i)x_j\rangle + \frac\kp N \sum_{k=1}^N \Big( g_{ik}(h_{kj} - h_{ik}h_{ij}) + g_{jk} (h_{ik} - h_{kj}h_{ij}) \Big), \\
&{\dot G}_{ij} = A_i G_{ij} - G_{ij}A_j + \frac\kp N \sum_{k=1}^N \Big( h_{ik}( G_{kj} - G_{ik} G_{ij}) + h_{jk}(G_{ik} - G_{ij}G_{kj}) \Big).
 \end{aligned}
 \end{align}
 Now, we use the conditions on initial data:
 \[
 \mathcal A(\mathcal X^0)>\frac12, \quad \mathcal A(\mathcal U^0) > d-\frac12\left( 1+ \sqrt{ 1-\frac{4 \mathcal D(\mathcal A)}{\kp \mathcal A(\mathcal X^0) }  }\right)>0,
 \]
and the continuity of solutions to see that  there exists a finite time $T_*>0$ such that
\[
\mathcal A(\mathcal X(t))>0, \quad \mathcal A(\mathcal U(t))>0, \quad t\in [0,T_*). 
\]
It follows from \eqref{E-10} and \eqref{F-17} that
 \begin{align*}
 \frac\d\dt \mathcal A(\mathcal X) >\frac{\kp}{2} \mathcal A(\mathcal U)  \mathcal A(\mathcal X)(1- \mathcal A(\mathcal X)) - \mathcal D(\Omega), \quad t\in [0,T_*).
 \end{align*}
At $t=0$, since $\mathcal A(\mathcal X^0)>\frac12$, one has 
\[
 \frac\d\dt \mathcal A(\mathcal X)\biggl|_{t=0} >\frac{\kp}{2} \mathcal A(\mathcal U)  \mathcal A(\mathcal X)(1- \mathcal A(\mathcal X)) - \mathcal D(\Omega)\biggl|_{t=0} = \frac{\kp \mathcal A(\mathcal U^0)}{2} \mathcal A(\mathcal X^0) (1 - \mathcal A(\mathcal X^0))-\mathcal D( \Omega)  >0.
\]
Thus, we see that $\mathcal A(\mathcal X)$ begins to increase 
\[
\mathcal A(\mathcal X(t))>\mathcal A(\mathcal X^0),\quad t\in [0,T_{**}).
\]
For $t\in [0,T_{**})$, the diameter $\mathcal D(\mathcal U)$ satisfies  
 \begin{align*} 
\begin{aligned}\label{F-18}
&\frac\d\dt \mathcal D(\mathcal U)^2 = -\frac\kp N \sum_{k=1}^N \Big( h_{i_tk} ( \mathcal D(\mathcal U)^2 - \|I_n - G_{kj_t}\|_\tF^2)  + h_{j_tk} (\mathcal D(\mathcal U)^2 - \|I_n - G_{ki_t}\|_\tF^2) \Big) \\
& -(2- \mathcal D(\mathcal U)^2) \cdot \frac{\kp}{2N} \sum_{k=1}^N h_{i_tk} \|I_n - G_{ki_t} \|_\tF^2 + h_{j_tk} \|I_n - G_{kj_t}\|_\tF^2) + \textup{tr} (G_{i_tj_t}(A_i - A_j)) \\
&< -\mathcal A(\mathcal X^0) (2- \mathcal D(\mathcal U)^2) \cdot \frac{\kp}{2N } \sum_{k=1}^N \Big(  \|I_n - G_{ki_t} \|_\tF^2 + \|I_n - G_{kj_t}\|_\tF^2 \Big) + \mathcal D(\mathcal A) \\
&< -\frac{\kp \mathcal A(\mathcal X^0)}{2}  (2- \mathcal D(\mathcal U)^2)D(\mathcal U)^2 + \mathcal D(\mathcal A) .
\end{aligned}
\end{align*}
Since $\mathcal D({\mathcal U}^0)$ satisfies \eqref{F-16-1}, then we have
\[
\frac\d\dt \mathcal D(\mathcal U)^2 \biggl|_{t=0} <0 ,
\]
which implies that $\mathcal D(\mathcal U)$ starts to increase and $\mathcal A(\mathcal U)$ start to decrease. Thus, if we continue this process, then we conclude 
\[
\mathcal A(\mathcal X(t))>\frac12, \quad  \mathcal D(\mathcal U(t))^2 < 1+ \sqrt{ 1-\frac{4 \mathcal D(\mathcal A)}{\kp \mathcal A(\mathcal X^0) }  },\quad t>0. 
\]
In addition, since $\mathcal A(\mathcal X)$ and $\mathcal D(\mathcal U)$ satisfy for $t>0$, 
\begin{align*}
& \frac\d\dt \mathcal A(\mathcal X) >\frac{\kp}{2} \mathcal A(\mathcal U^0)  \mathcal A(\mathcal X)(1- \mathcal A(\mathcal X)) - \mathcal D(\Omega), \\
 &\frac\d\dt \mathcal D(\mathcal U)^2 < -\frac{\kp \mathcal A(\mathcal X^0)}{2}  (2- \mathcal D(\mathcal U)^2)D(\mathcal U)^2 + \mathcal D(\mathcal A).
\end{align*}
By dynamical systems theory, there exists a finite entrance time $T_1>0$ such that
\begin{align*}
&\mathcal A(\mathcal X(t))> \frac12\left(  1 + \sqrt{1-\frac{8\mathcal D(\Omega)}{\kp \mathcal A(\mathcal U^0)}}\right) \quad\textup{or}\quad  \mathcal D(\mathcal X(t))^2 < 1 -\sqrt{1-\frac{8\mathcal D(\Omega)}{\kp \mathcal A(\mathcal U^0)}}, \\
&\mathcal D(\mathcal U(t))^2  < 1- \sqrt{ 1-\frac{4 \mathcal D(\mathcal A)}{\kp \mathcal A(\mathcal X^0) }  } ,\quad t>T_1. 
\end{align*}
This shows the desired assertion. 
 \end{proof}

\subsection{Partially homogeneous ensemble} \label{sec:6.3}
In this subsection, we consider a partially homogeneous ensemble:
\begin{equation} \label{F-19}
\Omega_\ell \equiv \Omega,\quad \ell=1,\cdots,N, \quad A_i \neq A_j \quad \textup{for some $i\neq j$.}
\end{equation}
Although the natural frequencies $\{\Omega_j\}$ are the same, complete aggregation for $\{x_j\}$ can emerge. However, since $\{A_j\}$ are different in general, $\{U_j\}$ tends to locked state. Thus, our goal in this subsection is to find a sufficient framework such that the following relation holds:
\[
\lim_{t\to\infty}  |x_i (t) - x_j(t)|=0 \quad \textup{and}\quad \exists~~\lim_{t\to\infty} U_i(t) U_j^\top (t). 
\] 
First, we show that relative distances for $\{x_j\}$ tend to zero and the maximal diameter $\mathcal D(\mathcal U)$ can be controlled in terms of the coupling strength $\kp$.

\begin{lemma} \label{L6.2} 
Suppose initial data and system parameters satisfy 
\begin{align*}
 \mathcal A(\mathcal X^0)>0,\quad  \mathcal D(\mathcal U^0)^2 < 1+ \sqrt{ 1-\frac{4 \mathcal D(\mathcal A)}{\kp \mathcal A(\mathcal X^0) }  },\quad \kp > \frac{4 \mathcal D(\mathcal A)}{\mathcal A(\mathcal X^0)}, 
\end{align*}
and   let $(\mathcal X, \mathcal U)$ be a solution to \eqref{F-3} with \eqref{F-19}. Then, there exists $T_* > 0$ such that 
\[
\lim_{t\to\infty} \mathcal D( \mathcal X(t))= 0, \quad \mathcal D(\mathcal U(t)^2< 1-\sqrt{ 1-\frac{4 \mathcal D(\mathcal A)}{\kp \mathcal A(\mathcal X^0) }  },\quad t>T_*. 
\]
\end{lemma}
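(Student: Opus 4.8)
The plan is to reduce the coupled system \eqref{F-3} under \eqref{F-19} to the scalar and matrix correlation dynamics already used in Lemma \ref{L6.1} and Theorem \ref{T6.2}, and to isolate the one structural feature distinguishing the partially homogeneous case, namely the homogeneity of $\{\Omega_j\}$. Introducing the correlations $h_{ij}=\langle x_i,x_j\rangle$, $G_{ij}=U_iU_j^\top$ and $g_{ij}=\textup{tr}(G_{ij})$ as in \eqref{F-6}, I would first differentiate them along \eqref{F-3}. Because $\Omega_\ell\equiv\Omega$, the free-flow contribution $\langle x_i,(\Omega_j-\Omega_i)x_j\rangle$ to $\dot h_{ij}$ vanishes identically, so the $h$-dynamics coincide \emph{exactly} with the homogeneous equation $\eqref{F-8}_1$ used in Theorem \ref{T6.1}; by contrast, the drift $A_iG_{ij}-G_{ij}A_j$ survives in $\dot G_{ij}$, exactly as in $\eqref{F-17}_2$. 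This dichotomy is the heart of the argument: the $x$-subsystem enjoys genuine logistic attraction, while the $U$-subsystem is only trapped near a stable equilibrium.

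Next I would run a simultaneous bootstrap on the region $\{\mathcal A(\mathcal X)>0,\ \mathcal D(\mathcal U)^2<2\}$. Let $\bar T$ denote the supremum of times on which both conditions hold; on $[0,\bar T)$ one has $g_{ij}>n-1>0$ and the $h$-weights in $\dot G_{ij}$ are positive. The decisive gain over Theorem \ref{T6.2} is that, the free-flow term having dropped, the extremal-index estimate of Theorem \ref{T6.1} yields
\[
\frac{\d}{\dt}\mathcal A(\mathcal X)\;\ge\;\frac{\kp(n-1)}{N}\,\mathcal A(\mathcal X)\sum_{k=1}^N\big(2-h_{i_tk}-h_{j_tk}\big)\;\ge\;0,
\]
so $\mathcal A(\mathcal X)$ is nondecreasing and $\mathcal A(\mathcal X(t))\ge\mathcal A(\mathcal X^0)>0$ throughout; there is no erosion of the lower bound as in the heterogeneous proof, which is exactly why $\mathcal A(\mathcal X^0)>0$ suffices here in place of $\mathcal A(\mathcal X^0)>\tfrac12$. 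Inserting this uniform lower bound into the $\mathcal D(\mathcal U)^2$ estimate from the proof of Theorem \ref{T6.2},
\[
\frac{\d}{\dt}\mathcal D(\mathcal U)^2\;<\;-\frac{\kp\,\mathcal A(\mathcal X^0)}{2}\big(2-\mathcal D(\mathcal U)^2\big)\mathcal D(\mathcal U)^2+\mathcal D(\mathcal A),
\]
the hypotheses $\kp>4\mathcal D(\mathcal A)/\mathcal A(\mathcal X^0)$ and the bound on $\mathcal D(\mathcal U^0)^2$ place $\mathcal D(\mathcal U^0)^2$ below the larger root of the associated Riccati quadratic, both of whose roots lie strictly below $2$. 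A scalar comparison then keeps $\mathcal D(\mathcal U)^2$ below that larger root for all time, so $\mathcal D(\mathcal U)^2<2$ is never violated, giving $\bar T=\infty$; the same dynamical-systems comparison used in Theorem \ref{T6.2} produces a finite entrance time $T_*$ past which $\mathcal D(\mathcal U)^2$ falls below the smaller root $1-\sqrt{1-4\mathcal D(\mathcal A)/(\kp\mathcal A(\mathcal X^0))}$, which is the second assertion.

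With the bootstrap closed, $g_{ij}>n-1>0$ holds for all $t>0$, and the convexity bound $2-h_{i_tk}-h_{j_tk}>\tfrac12(1-\mathcal A(\mathcal X))$ refines the above into
\[
\frac{\d}{\dt}\mathcal A(\mathcal X)\;>\;\frac{\kp(n-1)}{2}\,\mathcal A(\mathcal X)\big(1-\mathcal A(\mathcal X)\big),
\]
forcing $\mathcal A(\mathcal X)\to1$ exponentially, i.e. $\mathcal D(\mathcal X)^2=2\big(1-\mathcal A(\mathcal X)\big)\to0$, which is the first assertion. The main obstacle is closing the bootstrap, not either estimate in isolation: the $x$- and $U$-equations couple through the weights $g_{ij}$ in $\dot x$ and $h_{ij}$ in $\dot U$, so neither diameter estimate is self-contained and one must propagate $\mathcal D(\mathcal U)^2<2$ and $\mathcal A(\mathcal X)>0$ together. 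The delicate point is that the monotonicity $\mathcal A(\mathcal X(t))\ge\mathcal A(\mathcal X^0)$ is available \emph{only} because the common $\Omega$ annihilates the free flow, precisely the mechanism that fails under the full heterogeneity of Theorem \ref{T6.2}. A secondary technicality, handled as in Lemma \ref{L6.1}, is the legitimacy of differentiating the nonsmooth extremal quantities $\mathcal A(\mathcal X)$ and $\mathcal D(\mathcal U)$, justified by the almost-everywhere differentiability of a min or max of finitely many analytic functions together with the selection of extremal indices $(i_t,j_t)$.
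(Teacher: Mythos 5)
Your proposal is correct and follows essentially the same route as the paper: the paper's (very terse) proof simply says to repeat the heterogeneous-ensemble argument of the preceding theorem, observing that the free-flow term $\langle x_i,(\Omega_j-\Omega_i)x_j\rangle$ drops out of $\dot h_{ij}$ so that $\mathcal A(\mathcal X)$ obeys a pure logistic inequality while $\mathcal D(\mathcal U)^2$ still satisfies the Riccati-type bound, exactly the two inequalities you derive and close by bootstrap. The only cosmetic difference is that you lower-bound $g_{ij}$ by $n-1$ via $\mathcal D(\mathcal U)^2<2$ where the paper writes the coefficient as $\mathcal A(\mathcal U^0)$; your writeup in fact supplies the bootstrap details the paper omits.
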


\begin{proof}
We closely follow Theorem \ref{T6.3} to conclude that
\begin{align} \label{F-20}
\begin{aligned}
& \frac\d\dt \mathcal A(\mathcal X) > \frac\kp2 \mathcal A(\mathcal U^0) \mathcal A(\mathcal X)(1-\mathcal A(\mathcal X)), \\
& \frac\d\dt \mathcal D(\mathcal U)^2 <  -\frac{\kp \mathcal A(\mathcal X^0)}{2}  (2- \mathcal D(\mathcal U)^2)D(\mathcal U)^2 + \mathcal D(\mathcal A) .
\end{aligned}
\end{align}
Thus, inequalities \eqref{F-20} are verified. For the convergence of $\mathcal D(\mathcal X)$, we use $\eqref{F-20}_1$ to show that $\mathcal A(\mathcal X)$ tends to 1 and recall the relation $\mathcal D(\mathcal X)^2 = 2(1-\mathcal A(\mathcal X))$. 
\end{proof}
In order to establish the desired convergence of $G_{ij} = U_iU_j^\top$, we adopt the strategy in \cite{H-R}. Recall that $G_{ij}$ satisfies
\begin{equation} \label{F-21}
\frac\d\dt  G_{ij} = A_i G_{ij} - G_{ij}A_j + \frac\kp N \sum_{k=1}^N \Big( h_{ik}( G_{kj} - G_{ik} G_{ij}) + h_{jk}(G_{ik} - G_{ij}G_{kj}) \Big),
\end{equation} 
and it follows from Lemma \ref{L6.2} that all $h_{ij}$ converges to 1. Thus, we rewrite \eqref{F-21} as 
\begin{equation*}  \label{F-22}
\frac\d\dt  G_{ij} = A_i G_{ij} - G_{ij}A_j + \frac\kp N \sum_{k=1}^N \Big(  ( G_{kj} - G_{ik} G_{ij}) +  (G_{ik} - G_{ij}G_{kj}) \Big) +\kp \varepsilon_{ij} ,
\end{equation*}
where the auxiliary functional $\varepsilon_{ij}$ is defined as
\begin{equation*}
\varepsilon_{ij} := \frac1N \sum_{k=1}^N \Big( (h_{ik}-1)(G_{kj} - G_{ik}G_{ij} )+ ( h_{jk}-1)(G_{ik} - G_{ij} G_{kj}) \Big) .
\end{equation*} 
Let $\{(\tilde x_j, \tilde U_j)\}$ be any solution to \eqref{F-3} with \eqref{F-19}. Then, we write
\begin{align*}
&\tilde G_{ij} := \tilde U_i \tilde U_j^\top,\quad \tilde h_{ij} := \langle \tilde x_i,\tilde x_j\rangle,\\
&\tilde \varepsilon_{ij} := \frac1N \sum_{k=1}^N \Big( ( \tilde h_{ik}-1)(\tilde G_{kj} -\tilde  G_{ik}\tilde G_{ij} )+ ( \tilde h_{jk}-1)(\tilde G_{ik} - \tilde G_{ij} \tilde G_{kj}) \Big) . 
\end{align*} 
Then, we observe 
\begin{align*}
(G_{ij} - \tilde G_{ij})' &= A_i ( G_{ij} - \tilde G_{ij}) - (G_{ij} - \tilde G_{ij})A_j  + \veps_{ij} - \tilde \veps_{ij} \\
&\hspace{0.5cm}+ \frac\kp N \sum_{k=1}^N \Big(  ( G_{kj} - G_{ik} G_{ij}) -( \tilde G_{kj} - \tilde G_{ik} \tilde G_{ij}) \\
&\hspace{3cm}+  (G_{ik} - G_{ij}G_{kj})-(\tilde G_{ik} - \tilde G_{ij}\tilde G_{kj}) \Big) .
\end{align*}
Define the diameter between $G_{ij}$:
\[
d(\mathcal U,\tilde {\mathcal U}) := \max_{1\leq i, j\leq N} \| G_{ij} - \tilde G_{ij} \|_\tF = \max_{1\leq i,j\leq N} \|U_iU_j^\top - \tilde U_i \tilde U_j^\top\|_\tF. 
\]
Then, after algebraic manipulation, one finds a differential inequality for $d(\mathcal U,\tilde {\mathcal U})$: 
\[
\frac\d\dt d(\mathcal U,\tilde {\mathcal U})  \leq -2\kp (1-2\mathcal D(\mathcal U))d(\mathcal U,\tilde {\mathcal U})  +  \|\veps_{ij}- \tilde\veps_{ij}\|_\tF.
\]
Below, we recall an elementary lemma whose proof can be found  in Lemma A.1 of \cite{C-H-H-J-K}. 
 \begin{lemma}   
Suppose that $f = f(t) \in (L^1\cap L^\infty)(\bbr_+)$ is a positive and bounded function whose argument tends to zero, as $t \to \infty$, and let $y=y(t)$ be a nonnegative $\mathcal C^1$-function satisfying 
\[
\begin{cases}
\dot y    \leq -ay+ f, \quad t>0, \\
y(0) = y_0,
\end{cases}
\]
where $a$ is a positive constant. Then, $y=y(t)$ tends to zero with an integrable convergence rate. 
\begin{equation*}
\lim_{t\to\infty} y(t) = 0.
\end{equation*}
\end{lemma}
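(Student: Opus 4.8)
The plan is to reduce the differential inequality to an explicit integral bound and then control the resulting convolution term. First I would introduce the integrating factor $e^{at}$: since $a>0$ is a constant, the inequality $\dot y \le -ay + f$ is equivalent to $\frac{\d}{\dt}\big(e^{at} y(t)\big) \le e^{at} f(t)$. Integrating this over $[0,t]$ and invoking the nonnegativity of $y$, one obtains the pointwise estimate
\[
0 \le y(t) \le y_0\, e^{-at} + \int_0^t e^{-a(t-s)} f(s)\,\d s, \quad t \ge 0.
\]
The first summand $y_0 e^{-at}$ tends to zero exponentially fast, so the entire burden of the argument rests on the convolution term $I(t) := \int_0^t e^{-a(t-s)} f(s)\,\d s$.

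To estimate $I(t)$, I would split the integration interval at a threshold $\tau>0$ that will be sent to infinity at the end:
\[
I(t) = \int_0^\tau e^{-a(t-s)} f(s)\,\d s + \int_\tau^t e^{-a(t-s)} f(s)\,\d s =: I_1(t) + I_2(t).
\]
For the head part $I_1$, I would bound $e^{-a(t-s)} \le e^{-a(t-\tau)}$ for $s\in[0,\tau]$ and use $f \in L^1(\bbr_+)$ to get $I_1(t) \le e^{-a(t-\tau)} \|f\|_{L^1}$, which vanishes as $t\to\infty$ for each fixed $\tau$. For the tail part $I_2$, given $\e>0$ I would choose $\tau$ so large that $f(s) < \e$ for all $s\ge \tau$ (possible since $f(t)\to 0$), whence $I_2(t) \le \e \int_\tau^t e^{-a(t-s)}\,\d s \le \e/a$.

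Combining the two bounds, letting $t\to\infty$ first and then $\e\to 0$, yields $\limsup_{t\to\infty} I(t) \le \e/a$ for every $\e>0$, hence $I(t)\to 0$ and consequently $y(t)\to 0$. The main (and only mild) obstacle is this convolution term: because $f$ need not be monotone, a purely pointwise estimate fails and one is forced into the $\e$--$\tau$ splitting above, where the $L^1$-bound keeps $I_1$ controlled and the decay of $f$ at infinity annihilates $I_2$. Finally, the $L^1 \cap L^\infty$ integrability of $f$ is precisely what upgrades the conclusion to convergence with an integrable rate: by Young's convolution inequality, $e^{-a\,\cdot} \in L^1(\bbr_+)$ and $f \in L^1(\bbr_+)$ give $I \in L^1(\bbr_+)$, so the bound above shows $y \in L^1(\bbr_+)$ and its decay profile is integrable.
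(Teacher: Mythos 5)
Your proof is correct and is essentially the standard argument: the paper itself does not reproduce a proof but simply cites Lemma A.1 of \cite{C-H-H-J-K}, and the argument given there is the same Duhamel/integrating-factor representation followed by the $\e$--$\tau$ splitting of the convolution term that you carry out. Your closing remark that Young's inequality yields $y\in L^1(\bbr_+)$ correctly accounts for the ``integrable convergence rate'' claimed in the statement.
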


\vspace{0.2cm}

\noindent Now, we are ready to state the main theorem. 
\begin{theorem} \label{T6.3} 
Suppose initial data and system parameters satisfy 
\begin{equation*}
 \mathcal A(\mathcal X^0)>0,\quad  \mathcal D(\mathcal U^0)^2 < 1+ \sqrt{ 1-\frac{4 \mathcal D(\mathcal A)}{\kp \mathcal A(\mathcal X^0) }  },\quad \kp > \frac{4 \mathcal D(\mathcal A)}{\mathcal A(\mathcal X^0)}, 
\end{equation*} 
and   let $(\mathcal X, \mathcal U)$ be a solution to \eqref{F-3} with \eqref{F-19}.  Then, a solution to system \eqref{F-3} tends to the partially locked state. 
\end{theorem}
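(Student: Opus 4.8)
The plan is to decompose the partially locked state of Definition \ref{D6.1} into its two constituent properties, namely the spatial aggregation $\lim_{t\to\infty}\mathcal D(\mathcal X(t))=0$ and the existence of $\lim_{t\to\infty}U_i(t)U_j^\top(t)$, and to dispatch them separately. The first property is already delivered by Lemma \ref{L6.2}, which under the present hypotheses yields both $\mathcal D(\mathcal X(t))\to 0$ and the uniform bound $\mathcal D(\mathcal U(t))^2<1-\sqrt{1-\frac{4\mathcal D(\mathcal A)}{\kp\mathcal A(\mathcal X^0)}}$ for $t>T_*$. I would use this latter bound to keep the dissipation coefficient $2\kp(1-2\mathcal D(\mathcal U))$ in the comparison estimate bounded away from zero; this is where the coupling condition $\kp>\frac{4\mathcal D(\mathcal A)}{\mathcal A(\mathcal X^0)}$ (taken large enough to force $\mathcal D(\mathcal U)<\tfrac12$) is essential. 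It therefore remains only to establish convergence of $G_{ij}(t)=U_i(t)U_j^\top(t)$.

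The core idea is to promote the orbital-stability machinery assembled just before the statement into a Cauchy criterion for the \emph{single} trajectory $G_{ij}$. Since system \eqref{F-3} is autonomous, for every shift $s>0$ the translate $(\mathcal X(\cdot+s),\mathcal U(\cdot+s))$ is again a solution to \eqref{F-3} with \eqref{F-19}. I would thus apply the differential inequality for $d(\mathcal U,\tilde{\mathcal U})$ derived above, taking the comparison solution to be $\tilde{\mathcal U}=\mathcal U(\cdot+s)$, so that
\[
y_s(t):=\max_{1\leq i,j\leq N}\|G_{ij}(t)-G_{ij}(t+s)\|_\tF \quad\text{satisfies}\quad \frac\d\dt y_s\leq -2\kp(1-2\mathcal D(\mathcal U))\,y_s+\|\veps_{ij}-\tilde\veps_{ij}\|_\tF.
\]

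The quantitative heart of the argument is that the forcing term decays exponentially \emph{uniformly in} $s$. Because $h_{ik}(t)\geq\mathcal A(\mathcal X(t))$ and $\mathcal A(\mathcal X)\to1$ with an exponential rate (from the logistic inequality $\frac\d\dt\mathcal A(\mathcal X)>\frac\kp2\mathcal A(\mathcal U^0)\mathcal A(\mathcal X)(1-\mathcal A(\mathcal X))$ of Lemma \ref{L6.2}), one has $\mathcal D(\mathcal H)(t)\leq\tfrac12\mathcal D(\mathcal X)(t)^2\leq Ce^{-\lambda t}$, whence $\|\veps_{ij}(t)\|_\tF\leq C'\mathcal D(\mathcal H)(t)\leq C'' e^{-\lambda t}$ and so $\|\veps_{ij}(t)-\veps_{ij}(t+s)\|_\tF\leq 2C'' e^{-\lambda t}$ independently of $s$. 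Inserting this into Grönwall's inequality and using $y_s(0)\leq 2\sqrt n$ (each $G_{ij}$ is orthogonal), I obtain $y_s(t)\leq\Phi(t)$ with $\Phi(t)\to0$ and $\Phi$ \emph{independent} of $s$. This uniform decay is precisely the Cauchy property: for $t_2>t_1>T$ write $t_2=t_1+s$ to get $\|G_{ij}(t_1)-G_{ij}(t_2)\|_\tF\leq y_s(t_1)\leq\Phi(t_1)$, so $G_{ij}(t)$ converges. Combined with $\mathcal D(\mathcal X)\to0$ from Lemma \ref{L6.2}, this furnishes the partially locked state.

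The main obstacle is exactly this uniformity. Trying to prove convergence of $G_{ij}$ by arguing $\dot G_{ij}\in L^1$ fails, because the drift $A_iG_{ij}-G_{ij}A_j$ does not vanish when $A_i\neq A_j$; one must instead compare the trajectory against its own time-translates, and the delicate point is that the Grönwall bound on $y_s$ be uniform in the shift $s$, so that the pointwise-in-$s$ orbital stability supplied by the elementary lemma preceding the theorem is upgraded to a genuine Cauchy estimate. Securing the uniform exponential control of $\veps_{ij}$ together with the positivity of the dissipation rate $2\kp(1-2\mathcal D(\mathcal U))$ for $t>T_*$ is where the smallness assumptions on the initial data and the largeness of $\kp$ do the real work.
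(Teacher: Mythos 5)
Your proposal is correct and follows essentially the same route as the paper: Lemma \ref{L6.2} disposes of $\mathcal D(\mathcal X)\to 0$, and the convergence of $G_{ij}=U_iU_j^\top$ is obtained by comparing the trajectory with its own time-translates via the differential inequality for $d(\mathcal U,\tilde{\mathcal U})$, the autonomy of the system, and completeness of $\mathbf{SO}(n)$, exactly as in the paper. Your explicit insistence that the decay bound on $y_s$ be \emph{uniform} in the shift $s$ (via the uniform exponential control of $\veps_{ij}$ and the uniform bound $y_s(0)\leq 2\sqrt n$) actually sharpens a step the paper leaves implicit — a pointwise-in-$T$ application of the elementary Gr\"onwall lemma alone does not yield the Cauchy property — so your version is, if anything, the more complete argument.
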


\begin{proof}
It follows from Lemma \ref{L6.2} that relative distances for $\mathcal X$ tend to zero. Thus, it suffices to consider the dynamics of $\mathcal U$. From the arguments in the above, we show that $d(\mathcal U,\tilde {\mathcal U})$ tends to zero. As a next step, since our system is autonomous, for any $T>0$, we choose $\tilde U_j$ and $\tilde V_j$ as 
\begin{equation*}
\tilde U_j(t) = U_j(t+T). 
\end{equation*} 
By discretizing the time $t\in \bbr_+$ as $n\in \bbz_+$ and setting $T=m \in \bbz_+$, we conclude that $\{U_i(n)U_j^\top(n)\}_{n\in \bbz_+}$ becomes a Cauchy sequence in the complete spaces $ \mathbf{SO}(n)$. Hence, there exists constant unitary matrices $U_{ij}^\infty \in  \mathbf{SO}(n)$ such that
\begin{equation*}
\lim_{t\to\infty} \| U_i(t)U_j^\top(t) - U_{ij}^\infty \|_\tF = 0.
\end{equation*}
This establishes the convergence of $U_iU_j^\top$. 
\end{proof} 

\section{Conclusion}\label{sec:7}
\setcounter{equation}{0}
In this paper, we have provided a systematic algebraic approach for the weak coupling of the Lohe tensor models with possibly different ranks and sizes. The Lohe tensor model incorporates well-studied first-order aggregation models such as the Kuramoto model, the swarm sphere model, the Lohe matrix model that  have been extensively studied in previous literature. For the systematic modeling of the  weak coupling of two models, we introduced a characteristic symbol corresponding to a  given Cauchy problem to the Lohe tensor model, and it consists of four components, namely a size vector, a coupling strength tensor, set of natural frequency tensors and initial configuration, and then we also introduce a fusion operation $\star$ which provides a binary operation on the space of all characteristic symbols. It is worthwhile to mention that such fusion operation can form a monoid structure. In this way, we can derive a weakly coupled model for multiple Lohe tensor models. As a concrete example, we presented  a weakly coupled model for the swarm sphere model and the Lohe matrix model. For this weakly coupled model, we also provided several sufficient frameworks leading to collective emergent dynamics. Of course, there are many untouched issues in this work. For example, we have not studied the emergent dynamics of the multiple tensor model and did not investigate the solution structure of the proposed model. These interesting issues will be left for   future work.

\end{document}